\documentclass{amsart}
\usepackage{amsmath,amssymb,amsthm}
\usepackage[latin1]{inputenc}
\usepackage{tabularx,multicol}
\usepackage{graphicx,float,psfrag,array}
 \usepackage{stmaryrd}
\usepackage{url}
\usepackage{color,import}
\newcommand{\reff}[1]{(\ref{#1})}

\oddsidemargin=18pt
\textwidth=15,5cm
\evensidemargin=18pt

\theoremstyle{plain}
\newtheorem{theo}{Theorem}[section]
\newtheorem{cor}[theo]{Corollary}
\newtheorem{prop}[theo]{Proposition}
\newtheorem{lem}[theo]{Lemma}
\newtheorem{defi}[theo]{Definition}
\newtheorem{theorem}[theo]{Theorem}
\newtheorem{definition}[theo]{Definition}
\newtheorem{hypothese}{Assumption}
\newtheorem{lemma}[theo]{Lemma}

\newtheorem{proposition}[theo]{Proposition}
\theoremstyle{remark}
\newtheorem{rem}[theo]{Remark}
\newtheorem{remark}[theo]{Remark}

\newcommand{\Ex}{\mathbb{N}}

\renewcommand{\Pr}{\mathbb P} 
\newcommand{\Er}{\mathbb E} 
\newcommand{\Esp}{\mathbf{E}}
\renewcommand{\phi}{\varphi}
\renewcommand{\epsilon}{\varepsilon}

\newcommand{\EsAr}{\mathbb{T}}

\newcommand{\cb}{{\mathcal B}}
\newcommand{\cc}{{\mathcal C}}

\newcommand{\cf}{{\mathcal F}}

\newcommand{\ch}{{\mathcal H}}

\newcommand{\cn}{{\mathcal N}}
\newcommand{\cm}{{\mathcal M}}
\newcommand{\cp}{{\mathcal P}}

\newcommand{\crr}{{\mathcal R}}

\newcommand{\ct}{{\mathcal T}}
\newcommand{\at}{{\mathfrak T}}

\newcommand{\cx}{{\mathcal X}}
\newcommand{\cy}{{\mathcal Y}}

\newcommand{\cz}{{\mathcal Z}}

\newcommand{\E}{{\mathbb E}}

\newcommand{\N}{{\mathbb N}}
\renewcommand{\P}{{\mathbb P}}

\newcommand{\R}{{\mathbb R}}

\newcommand{\T}{{\mathbb T}}

\newcommand{\K}{{\mathbb K}}

\newcommand{\bm}{{\mathbf m}}

\newcommand{\bN}{{\mathbf N}}
\newcommand{\bP}{{\mathbf P}}

\newcommand{\LL}{\mathbb L}

\newcommand{\ind}{{\bf 1}}

\newcommand{\dis}{{\rm dis}\;}

\newcommand{\norm}[1]{\mathop{\parallel\! #1 \! \parallel}\nolimits}
\newcommand{\val}[1]{\mathop{\left| #1 \right|}\nolimits}
\newcommand{\inv}[1]{\mathop{\frac{1}{ #1}}\nolimits}
\newcommand{\expp}[1]{\mathop {\mathrm{e}^{ #1}}}

\begin{document}
 
\title{Exit times for an increasing Lévy tree-valued process}
\date{\today}
\author{Romain Abraham} 

\address{
Romain Abraham,
MAPMO, CNRS UMR 7349,
F\'ed\'eration Denis Poisson FR 2964,
Universit\'e d'Orl\'eans,
B.P. 6759,
45067 Orl\'eans cedex 2
FRANCE.
}
 
\email{romain.abraham@univ-orleans.fr} 

\author{Jean-Fran\c{c}ois Delmas}
\address{
Jean-Fran\c cois Delmas,
Universit\'e Paris-Est, CERMICS, 6-8
av. Blaise Pascal,
 Champs-sur-Marne, 77455 Marne La Vall\'e, France.
\url{http://cermics.enpc.fr/~delmas/home.html}
}
\email{delmas@cermics.enpc.fr}

\author{Patrick Hoscheit}
\address{
Patrick Hoscheit, 
Universit\'e Paris-Est, CERMICS, 6-8
av. Blaise Pascal,
 Champs-sur-Marne, 77455 Marne La Vall\'e, France.
\url{http://cermics.enpc.fr/~hoscheip/home.html}
}
\email{hoscheip@cermics.enpc.fr}

\thanks{This work is partially supported the French ``Agence Nationale de
 la Recherche'', ANR-08-BLAN-0190.}

\keywords{}

\subjclass[2010]{60G55, 60J25, 60J80}

\begin{abstract} We give an explicit construction of the increasing 
 tree-valued process introduced by Abraham and Delmas using a random 
 point process of trees and a grafting procedure. This random point
 process will be used in companion papers to study record processes on
 Lévy trees. We use the Poissonian structure of the jumps of the
 increasing tree-valued process to describe its behavior at the first
 time the tree grows higher than a given height. We also give the
 joint distribution of this exit time and the ascension time which
 corresponds to the first infinite jump of the tree-valued process. 
\end{abstract}
\maketitle

\section{Introduction}

Lévy trees arise as a natural generalization to the continuum trees defined by
Aldous \cite{Aldous1991a}. They are located at the intersection of several
important fields: combinatorics of large discrete trees, Lévy processes and
branching processes. Consider a branching mechanism $\psi$, that is a function
of the form 
\begin{equation} 
\label{psi} 
\psi(\lambda) = \alpha \lambda + \beta
\lambda^2 + \int_{(0,+\infty)} (\expp{-\lambda x} -1 + \lambda
x\ind_{\{x<1\}}) \Pi(dx) 
\end{equation} 
with $\alpha\in \R$, $\beta\geq 0$, $\Pi$ a Lévy measure such that
$\int_{(0,+\infty )} 1\wedge x^2 \; \Pi(dx)<+\infty $. In the (sub)critical case
$\psi'(0)\ge 0$, Le Gall and Le Jan \cite{LeGall1998b} defined a
\emph{continuum} tree structure, which can be described by a tree $\ct$, for the
genealogy of a population whose size is given by a CSBP with branching mechanism
$\psi$. We shall consider the distribution $\P_r^\psi(d\ct)$ of this Lévy tree
when the CSBP starts at mass $r>0$, or its excursion measure $\N^\psi[d\ct]$,
when the CSBP is distributed under its canonical measure. The $\psi$-Lévy tree
possesses several striking features as pointed out in the works of Duquesne and
Le Gall \cite{Duquesne2002,Duquesne2005a}. For instance, the branching nodes can
only be of degree 3 (binary branching) if $\beta>0$ or of infinite degree (when
removing the branching point, the tree is separated in infinitely many connected
components) if $\Pi \ne 0$. Furthermore, there exists a mass measure $\bm^\ct$
on the leaves of $\ct$, whose total mass corresponds to the total population
size $\sigma=\bm^\ct(\ct)$ of the CSBP. We shall also consider the extinction
time of the CSBP which corresponds to the height $H_{max}(\ct)$ of the tree
$\ct$. The results can be extended to the super-critical case, using a Girsanov
transformation given by Abraham and Delmas \cite{Abraham2009b}.
\par In \cite{Abraham2009b}, a decreasing continuum tree-valued process is
defined using the so-called pruning procedure of Lévy trees introduced in
Abraham, Delmas and Voisin \cite{Abraham2010a}. By marking a $\psi$-Lévy tree
with two different kinds of marks (the first ones lying on the skeleton of the
tree, the other ones on the nodes of infinite degree), one can prune the tree by
throwing away all the points having a mark on their ancestral line connecting
them to the root. The main result of \cite{Abraham2010a} is that the remaining
tree is still a Lévy tree, with branching mechanism related to $\psi$. The idea
of \cite{Abraham2009b} is to consider a particular pruning with an intensity
depending on a parameter $\theta$, so that the corresponding branching mechanism
$\psi_\theta$ is $\psi$ shifted by $\theta$:
\[
\psi_\theta(\lambda)=\psi(\theta+\lambda)- \psi(\theta).
\]

Letting $\theta$ vary enables to define a decreasing tree-valued Markov process
$(\ct_\theta, \theta\in \Theta^\psi)$, with $\Theta^\psi\subset \R$ the set of
$\theta$ for which $\psi_\theta$ is well-defined, and such that $\ct_\theta$ is
distributed according to $\N^{\psi_\theta}$. If we write
$\sigma_\theta=\bm^{\ct_\theta}(\ct_\theta)$ for the total mass of $\ct_\theta$,
then the process $(\sigma_\theta, \theta \in \Theta^\psi)$ is a pure-jump
process. The case $\Pi=0$ was studied by Aldous and Pitman \cite{Aldous1998}.
The time-reversed tree-valued process is also a Markov process which defines a
growing tree process. Let us mention that the same kind of ideas have been used
by Aldous and Pitman \cite{Aldous1998a} and by Abraham, Delmas and He
\cite{Abraham2010} in the framework of Galton-Watson trees to define growing
discrete tree-valued Markov processes.
\par In the discrete framework of \cite{Abraham2010}, it is possible to define
the infinitesimal transition rates of the growing tree process. In
\cite{Evans2006}, Evans and Winter define another continuum tree-valued process
using a prune and re-graft procedure. This process is reversible with respect to
the law of Aldous's continuum random tree and its infinitesimal transitions are
described using the theory of Dirichlet forms.
\par In this paper, we describe the infinitesimal behavior of the growing
continuum tree-valued process, that is of $(\ct_\theta, \theta\in \Theta^\psi)$
seen backwards in time. The Special Markov Property in \cite{Abraham2010a}
describes only two-dimensional distributions and hence the transition
probabilities but, since the space of real trees is not locally compact, we
cannot use the theory of infinitesimal generators to describe its infinitesimal
transitions. Dirichlet forms cannot be used either since the process is not
symmetric (it is increasing). However, it is a pure-jump process and our first
main result shows that the infinitesimal transitions of the process can be
described using a random point process of trees which are grafted one by one on
the leaves of the growing tree. More precisely, let $\{\theta_j; j\in J\}$ be
the set of jumping times of the mass process $ (\sigma_\theta, \theta \in
\Theta^\psi)$. Then, informally, at time $\theta_j$, a tree $\ct^j$ distributed
according to $\bN^{\psi_{\theta_j}}[\ct\in \bullet]$, with:
\[
 \bN^{\psi_\theta}[\ct\in \bullet] = 2\beta \Ex^{\psi_\theta}[\ct\in
 \bullet] + 
\int_{(0,+\infty)} \Pi(dr) r \expp{-\theta r} \Pr_r^{\psi_\theta}(\ct
\in \bullet), 
\]
is grafted at $x_j$, a leaf of $\ct_{\theta_j}$ chosen at random (according to
the mass measure $\bm^{\ct_{\theta_j}}$). We also prove that the random point
measure 
\[
\cn=\sum_{j\in J}\delta_{(x_j,\ct^j,\theta_j)}
\]
has predictable compensator:
\[
\bm^{\ct_\theta}(dx) \bN^{\psi_\theta}[d\ct]\; \ind_{\{\theta\in
 \Theta^\psi\}}\; d\theta
\]
with respect to the backwards in time natural filtration of the process. See
Corollary \ref{cor:cn-T} for a precise statement. 
\par Notice that the precise statement relies on the introduction of the set of
locally compact weighted real trees endowed with a Gromov-Hausdorff-Prohorov
distance. Therefore, we will assume that Lévy trees are locally compact which
corresponds to the Grey condition: $\int^{+\infty}\frac{du}{\psi(u)}<\infty$. In
the (sub)critical case this implies that the corresponding height process of the
Lévy tree is continuous and that the tree is compact. However, the tree-valued
process is defined in \cite{Abraham2010a} without this assumption and we
conjecture that the jump representation of the tree-valued Markov process holds
without this assumption.
\par The representation using the random point measure allows to describe the
ascension time or explosion time (when it is defined):
\[
A = \inf \{ \theta \in \Theta^\psi,\ \sigma_\theta < \infty \}
\]
as $\inf\{\theta_j, \bm^{\ct^j}(\ct^j)<\infty\}$, the first time (backward!) a
tree with infinite mass is grafted. This representation is also used in Abraham
and Delmas \cite{Abraham2011,Abraham2012} respectively on the asymptotics of the
records on discrete subtrees of the continuum random tree and on the study of
the record process in general Lévy trees.
\par This structure, somewhat similar to the Poissonian structure of the jumps
of a Lévy process (although in our case the structure is neither homogeneous nor
independent), enables us to study the exit time of first passage of the growing
tree-valued process above a given height:
\[ 
A_h = \sup \{ \theta \in \Theta^\psi,\ H_{max}(\ct_\theta) > h\}. 
\]
We give the joint distribution of the ascension time and the exit time
$(A,A_h)$, see Proposition \ref{prop:Qh-A}. In particular, $A_h$ goes to $A$ as
$h$ goes to infinity: for $h$ very large, with high probability the process up
to $A$ will not have crossed height $h$, so that the first jump to cross height
$h$ will correspond to the grafting time of the first infinite tree, which
happens at the ascension time $A$.
\par We also give in Theorem \ref{ArbreAvantH} the joint distribution of
$(\ct_{A_h-}, \ct_{A_h})$ the tree just after and just before the jumping time
$A_h$. And we give a decomposition of $\ct_{A_h}$ along the ancestral branch of
the leaf on which the overshooting tree is grafted, which is similar to the
classical Bismut decomposition of Lévy trees. Conditionally on this ancestral
branch, the overshooting tree is then distributed as a regular Lévy tree,
conditioned on being high enough to perform the overshooting. This generalizes
results in \cite{Abraham2009b} about the ascension time of the tree-valued
process. Notice that this approach could easily be generalized to study spatial
exit times of growing families of super-Brownian motions.
\par All the results of this paper are stated in terms of real trees and not in
terms of the height process or the exploration process that encode the tree as
in \cite{Abraham2010a}. For this purpose, we define in Section \ref{sec:GPdist}
the state space of rooted real trees with a mass measure (called here weighted
trees or w-trees) endowed with the so-called Gromov-Hausdorff-Prohorov metric
defined in Abraham, Delmas and Hoscheit \cite{Abraham2012a} which is a slight
generalization of the Gromov-Hausdorff metric on the space of metric spaces, and
also a generalization of the Gromov-Prohorov topology of \cite{Greven2008b} on
the space of compact metric spaces endowed with a probability measure.
\par The paper is organized as follows. In Section \ref{sec:rappel}, we
introduce all the material for our study: the state space of weighted real trees
and the metric on it, see Section \ref{sec:GPdist} ; the definition of
sub(critical) Lévy trees via the height process ; the extension of the
definition to super-critical Lévy trees ; the pruning procedure of Lévy trees.
In Section \ref{sec:growing}, we recall the definition of the growing
tree-valued process by the pruning procedure as in \cite{Abraham2010a} in the
setting of real trees and give another construction using the grafting of trees
given by random point processes. We prove in Theorem \ref{theo:at=ct} that the
two definitions agree and then give in Corollary \ref{cor:cn-T} the random Point
measure description. Section \ref{sec:appli} is devoted to the application of
this construction on the distribution of the tree at the times it overshoots a
given height and just before, see Theorem \ref{ArbreAvantH}.

\section{The pruning of Lévy trees}
\label{sec:rappel}
\subsection{Real trees}

The first definitions of continuum random trees go back to Aldous
\cite{Aldous1991a}. Later, Evans, Pitman and Winter \cite{Evans2005} used the
framework of real trees, previously used in the context of geometric group
theory, to describe continuum trees. We refer to \cite{Evans,LeGall2006a} for a
general presentation of random real trees. Informally, real trees are metric
spaces without loops, locally isometric to the real line. 
\par More precisely, a metric space $(T,d)$ is a \emph{real tree} (or
$\R$\emph{-tree}) if the following properties are satisfied:
\begin{enumerate}
	\item For every $s,t\in T$, there is a unique isometric map $f_{s,t}$
from $[0,d(s,t)]$ to $T$ such that $f_{s,t}(0)=s$ and $f_{s,t}(d(s,t))=t$. 
	\item For every $s,t\in T$, if $q$ is a continuous injective map from
$[0,1]$ to $T$ such that $q(0)=s$ and $q(1)=t$, then
$q([0,1])=f_{s,t}([0,d(s,t)])$. 
\end{enumerate}
We say that a real tree is \emph{rooted} if there is a distinguished vertex
$\emptyset$, which will be called the \emph{root} of $T$. Such a real tree is
noted $(T,d,\emptyset)$. If $s,t\in T$, we will note $\llbracket s,t \rrbracket$
the range of the isometric map $f_{s,t}$ described above. We will also note
$\llbracket s,t \llbracket$ for the set $\llbracket s,t \rrbracket
\setminus\{t\}$. 
We give some vocabulary on real trees, which will be used constantly when
dealing with Lévy trees. Let $T$ be a real tree. If $x\in T$, we shall call
$\emph{degree}$ of $x$, and note by $n(x)$, the number of connected components
of the set $T\setminus\{x\}$. In a general tree, this number can be infinite,
and this will actually be the case with Lévy trees. The set of \emph{leaves} is
defined as:
\[
\mathrm{Lf}(T)=\{x\in T\backslash \{\emptyset\},\ n(x)=1\}. 
\]
If $n(x)\ge 3$, we say that $x$ is a \emph{branching point}. The set of
branching points will be noted $\mathrm{Br}(T)$. Among those, there is the set
of \emph{infinite} branching points, defined by
\[ 
\mathrm{Br}_\infty(T) = \{ x\in \mathrm{Br}(T),\ n(x) = \infty \} .
\]
Finally, the \emph{skeleton} of a real tree, noted $\mathrm{Sk}(T)$, is the set
of points in the tree that aren't leaves. It should be noted, following Evans,
Pitman and Winter \cite{Evans2005}, that the trace of the Borel $\sigma$-field
of $T$ on $\mathrm{Sk}(T)$ is generated by the sets $\llbracket s,s'
\rrbracket,\ s,s' \in \mathrm{Sk}(T)$. Hence, it is possible to define a
$\sigma$-finite Borel measure $l^{T}$ on $T$, such that 
\[ 
l^{T}(\mathrm{Lf}(T)) = 0
\quad\text{and}\quad
 l^{T}(\llbracket s,s'
\rrbracket)=d(s,s'). 
\]
This measure will be called \emph{length measure} on $T$. If $x,y$ are two
points in a rooted real tree $(T,d,\emptyset)$, then there is a unique point
$z\in T$, called the Most Recent Common Ancestor (MRCA) of $x$ and $y$ such that
$\llbracket \emptyset,x \rrbracket\cap \llbracket \emptyset,y \rrbracket =
\llbracket\emptyset,z \rrbracket$. This vocabulary is an illustration of the
genealogical vision of real trees, in which the root is seen as the ancestor of
the population represented by the tree. Similarly, if $x\in T$, we shall call
\emph{height} of $x$, and note by $H_x$ the distance $d(\emptyset,x)$ to the
root. The function $x\mapsto H_x$ is continuous on $T$, and we define the height
of $T$:
\[
H_{max}(T)=\sup_{x\in T} H_x.
\]

\subsection{Gromov-Prohorov metric}
\label{sec:GPdist}
\subsubsection{Rooted weighted metric spaces}
This Section is inspired by \cite{Duquesne2007}, but for the
fact that we include measures on the trees, in the spirit of
\cite{Miermont2007}. The detailed proofs of the results stated in this
Section are in \cite{Abraham2012a}. 
\par Let $(X,d^X)$ be a Polish metric space. For $A,B\in \cb(X)$, we set:
\[
 d_\text{H}^X(A,B)= \inf \{ \epsilon >0,\ A\subset B^\epsilon\
 \mathrm{and}\ B\subset 
A^\epsilon \}, 
\]
the Hausdorff distance between $A$ and $B$, where $A^\epsilon = \{ x\in X,
\inf_{y\in A} d^X(x,y) < \epsilon\}$ is the $\epsilon$-halo set of $A$. If $X$
is compact, then the space of compact subsets of $X$, endowed with the Hausdorff
distance, is compact, see theorem 7.3.8 in \cite{Burago2001}.
\par Recall that a Borel measure is locally finite if the measure of any bounded
Borel set is finite. We will use the notation $\cm_{f}(X)$ for the space of all
finite Borel measures on $X$. If $\mu,\nu \in \cm_f(X)$, we set:
\[
 d_\text{P}^X(\mu,\nu) = \inf \{ \epsilon >0,\ \mu(A)\le \nu(A^\epsilon) +
 \epsilon 
\text{ and } 
\nu(A)\le \mu(A^\epsilon)+\epsilon\ 
\text{ for all closed set } A \}, 
\]
the Prohorov distance between $\mu$ and $\nu$. It is well known that $(\cm_f(X),
d_\text{P}^X)$ is a Polish metric space, and that the topology generated by
$d_{P}^X$ is exactly the topology of weak convergence (convergence against
continuous bounded functionals).
\par If $\Phi:X\rightarrow X'$ is a Borel map between two Polish metric spaces
and if $\mu$ is a Borel measure on $X$, we will note $\Phi_*\mu$ the image
measure on $X'$ defined by $\Phi_*\mu(A)=\mu(\Phi^{-1}(A))$, for any Borel set
$A\subset X$. 

\begin{defi}
 \label{defi:rwms} 
$ $
\begin{itemize}
\item A \emph{rooted weighted metric space} $\cx = (X,d^X,
 \emptyset^X,\mu^X)$ is a metric space $(X , d^X)$ with a distinguished
 element $\emptyset^X\in X$ and a locally finite Borel measure $\mu^X$.
\item Two rooted weighted metric spaces $\cx=(X,d^X,\emptyset^X,\mu^X)$
 and $\cx'=(X',d^{X'},\emptyset^{X'},\mu^{X'})$ are said
 \emph{GHP-isometric} if there exists an isometric bijection $\Phi:X
 \rightarrow X'$ such that $\Phi(\emptyset^X)= \emptyset^{X'}$ and $\Phi_*
 \mu^X = \mu^{X'}$.
\end{itemize}
\end{defi}

Notice that if $(X, d^X)$ is compact, then a locally finite measure on $X$ is
finite and belongs to $\cm_f(X)$. We will now use a procedure due to Gromov
\cite{Gromov1999metric} to compare any two compact rooted weighted metric
spaces, even if they are not subspaces of the same Polish metric space.

\subsubsection{Gromov-Hausdorff-Prohorov distance for compact metric
 spaces} 
 Let $\cx=(X,d,\emptyset,\mu)$ and $\cx'=(X',d',\emptyset',\mu')$ be two compact
rooted weighted metric spaces, and define:
\begin{equation} \label{f:def}
 d_{\text{GHP}}^c(\cx,\cx') = \inf_{\Phi,\Phi',Z} \left(
 d_\text{H}^Z(\Phi(X),\Phi'(X')) + 
d^Z(\Phi(\emptyset),\Phi'(\emptyset')) + d_\text{P}^Z(\Phi_* \mu,\Phi_*'
\mu') \right), 
\end{equation}
where the infimum is taken over all isometric embeddings $\Phi:X\hookrightarrow
Z$ and $\Phi':X'\hookrightarrow Z$ into some common Polish metric space
$(Z,d^Z)$. 
\par Note that equation \reff{f:def} does not actually define a distance
function, as $d_{\text{GHP}}^c(\cx,\cx')=0$ if $\cx$ and $\cx'$ are
GHP-isometric. Therefore, we shall consider $\K$, the set of GHP-isometry
classes of compact rooted weighted metric space and identify a compact rooted
weighted metric space with its class in $\K$. Then the function
$d_{\text{GHP}}^c$ is finite on $\K^2$. 

\begin{theo}
 \label{theo:dcGHP}
The function $d_{\text{GHP}}^c$ defines a metric on $\K$ and the space
$(\K, d_{\text{GHP}}^c)$ is a Polish metric space. 
\end{theo}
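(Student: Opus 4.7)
The plan is to verify the four metric axioms and then establish separability and completeness, following the standard Gromov-style approach adapted to the rooted and weighted setting. Symmetry and non-negativity of $d_{\text{GHP}}^c$ are immediate from \reff{f:def}. The triangle inequality is handled by a gluing construction: given three compact rooted weighted metric spaces $\cx_1,\cx_2,\cx_3$ and $\epsilon>0$, I would choose near-optimal isometric embeddings of $\cx_1,\cx_2$ into a Polish space $Z_{12}$ and of $\cx_2,\cx_3$ into $Z_{23}$, and then form the amalgamated space $Z_{12}\sqcup_{\cx_2} Z_{23}$ by identifying the two images of $\cx_2$; equipping it with the largest pseudometric compatible with both, and then quotienting, gives a Polish space into which all three embed, from which one reads off $d_{\text{GHP}}^c(\cx_1,\cx_3)\le d_{\text{GHP}}^c(\cx_1,\cx_2)+d_{\text{GHP}}^c(\cx_2,\cx_3)+3\epsilon$, using that the Hausdorff and Prohorov distances individually satisfy the triangle inequality and interact well with isometric embeddings.

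The main obstacle is the identity of indiscernibles: if $d_{\text{GHP}}^c(\cx,\cx')=0$, one must produce a GHP-isometry. I would take a sequence of embeddings $\Phi_n,\Phi'_n$ of $X,X'$ into Polish spaces $Z_n$ realizing distances tending to $0$. A useful reformulation is to replace \reff{f:def} by an equivalent expression in terms of \emph{correspondences}: pairs of compact subsets $R\subset X\times X'$ that project onto both factors, with small distortion $\text{dis}(R)=\sup\{|d^X(x,y)-d^{X'}(x',y')|;(x,x'),(y,y')\in R\}$, small root mismatch, and supporting a measure whose marginals approximate $\mu^X,\mu^{X'}$. By compactness of $X\times X'$, a sequence of correspondences with distortion going to $0$ admits a Hausdorff-convergent subsequence whose limit $R_\infty$ has zero distortion and hence defines an isometric bijection $\Phi:X\to X'$ preserving the root; weak compactness of the associated measures on $X\times X'$ (which are tight because their marginals are tight) yields $\Phi_*\mu^X=\mu^{X'}$.

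For separability, I would show that the countable family of finite rooted metric spaces with rational distances equipped with finitely supported atomic measures having rational weights at the points is dense in $\K$: any compact rooted weighted metric space $\cx$ can be approximated by a finite $\epsilon$-net embedded in $X$ itself, with the measure $\mu^X$ approximated in Prohorov distance by a discrete measure supported on this net.

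Completeness is the most technical part. Given a Cauchy sequence $(\cx_n)$ in $\K$, I would extract a subsequence with $d_{\text{GHP}}^c(\cx_n,\cx_{n+1})<2^{-n}$, choose embeddings $\Phi_n,\Phi'_n$ of $\cx_n,\cx_{n+1}$ into a common Polish space $Z_n$ nearly realizing the infimum, and iteratively glue the $Z_n$ along the images of the $\cx_n$ as in the triangle inequality argument to obtain a Polish space $Z_\infty$ into which every $\cx_n$ embeds isometrically with the images forming a Cauchy sequence in Hausdorff distance and the measures forming a Cauchy sequence in Prohorov distance. Completing $Z_\infty$ if necessary and using completeness of the Hausdorff and Prohorov metrics on a complete Polish space, one obtains a limiting compact set $X_\infty$, a root (as the limit of the roots, which form a Cauchy sequence), and a finite measure $\mu_\infty$; the resulting class $\cx_\infty\in\K$ is the required limit. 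I would then invoke the detailed proofs from \cite{Abraham2012a} for the measure-theoretic bookkeeping.
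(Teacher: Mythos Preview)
The paper does not actually prove Theorem~\ref{theo:dcGHP}: it states at the beginning of Section~\ref{sec:GPdist} that ``the detailed proofs of the results stated in this Section are in \cite{Abraham2012a}'' and gives no argument beyond the statement. So there is no proof in the paper to compare against; your sketch is essentially an outline of what one expects to find in the companion paper \cite{Abraham2012a}, and indeed you explicitly defer to it at the end.

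Your outline is the standard Gromov-style argument adapted to roots and measures, and is correct in spirit. One point worth flagging: you assert the equivalence between the embedding formulation \reff{f:def} and a correspondence-plus-coupling formulation as if it were automatic. In the purely metric (Gromov--Hausdorff) case the equivalence is clean, but once measures are involved the two formulations are only comparable up to multiplicative constants, and establishing this comparison is itself part of the work carried out in \cite{Abraham2012a} (and in \cite{Miermont2007} for probability measures). Your identity-of-indiscernibles argument and your completeness argument both lean on this reformulation, so strictly speaking that step needs to be justified rather than invoked. Apart from this, the gluing for the triangle inequality, the $\epsilon$-net argument for separability, and the iterated gluing for completeness are all standard and correctly sketched.
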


We shall call $d_{\text{GHP}}^c$ the \emph{Gromov-Hausdorff-Prohorov} metric.
This extends the Gromov-Hausdorff metric on compact metric spaces, see
\cite{Burago2001} section 7, as well as the Gromov-Hausdorff-Prohorov metric on
compact metric spaces endowed with a probability measure, see
\cite{Miermont2007}. See also \cite{Greven2008b} for an other approach on metric
spaces endowed with a probability measure. 

\subsubsection{Gromov-Hausdorff-Prohorov distance}

However, the definition of Gromov-Hausdorff-Prohorov distance on compact metric
space is not yet general enough, as we want to deal with unbounded trees with
$\sigma$-finite measures. To consider such an extension, we shall consider
complete and locally compact length spaces.
\par We recall that a metric space $(X,d)$ is a \emph{length space} if for every
$x,y\in X$, we have:
\[
 d(x,y) = \inf L(\gamma) , 
\]
where the infimum is taken over all rectifiable curves $\gamma:[0,1]\rightarrow
X$ such that $\gamma(0)=x$ and $\gamma(1)=y$, and where $L(\gamma)$ is the
length of the rectifiable curve $\gamma$. 

\begin{defi}
 \label{defi:L} Let $\LL$ be the set of GHP-isometry classes of rooted weighted
complete and locally compact length spaces and identify a rooted weighted
complete and locally compact length spaces with its class in $\LL$.
\end{defi}

If $\cx=(X,d,\emptyset,\mu)\in \LL$, then for $r\geq 0$ we will consider its
restriction to the ball of radius $r$ centered at $\emptyset$,
$\cx^{(r)}=(X^{(r)}, d^{(r)}, \emptyset, \mu^{(r)})$, where
\[
X^{(r)}=\{x\in X; d(\emptyset,x)\leq r\},
\]
the metric $d^{(r)}$ is the restriction of $d$ to $X^{(r)}$, and the measure
$\mu^{(r)}(dx)=\ind_{X^{(r)}} (x)\; \mu(dx)$ is the restriction of $\mu$ to
$X^{(r)}$. Recall the Hopf-Rinow theorem implies that if $(X, d)$ is a complete
and locally compact length space, then every closed bounded subset of $X$ is
compact. In particular if $\cx$ belongs to $ \LL$ , then $\cx^{(r)}$ belongs to
$\K$ for all $r\geq 0$.
\par We state a regularity Lemma of $d^c_{\text{GHP}}$ with respect to the
restriction operation. 

\begin{lem}
 \label{lem:reg-d-GHP}
Let $\cx$ and $\cy$ belong to $\LL$. Then the function defined on $\R_+$:
\[
r\mapsto d^c_{\text{GHP}}\left(\cx^{(r)},\cy^{(r)}\right)
\]
is càdlàg.
\end{lem}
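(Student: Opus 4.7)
The plan is to prove that each of the two maps $r\mapsto\cx^{(r)}$ and $r\mapsto\cy^{(r)}$, with values in $(\K,d^c_{\text{GHP}})$, is itself c\`adl\`ag, after which the claim follows from the reverse triangle inequality
\[
\left| d^c_{\text{GHP}}(\cx^{(r)},\cy^{(r)}) - d^c_{\text{GHP}}(\cx^{(s)},\cy^{(s)}) \right| \leq d^c_{\text{GHP}}(\cx^{(r)},\cx^{(s)}) + d^c_{\text{GHP}}(\cy^{(r)},\cy^{(s)}).
\]
To estimate $d^c_{\text{GHP}}$ between two restrictions of the same $\cx=(X,d,\emptyset,\mu)\in\LL$, I take the ambient space $X$ itself as the common target $Z$ in \reff{f:def} and let $\Phi,\Phi'$ be the canonical inclusions; this makes the root-distance term vanish and leaves only a Hausdorff and a Prohorov term to control.

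For right-continuity at $r\geq 0$, fix $\epsilon>0$. Since $X^{(r)}\subset X^{(r+\epsilon)}$, the Hausdorff bound reduces to proving $X^{(r+\epsilon)}\subset (X^{(r)})^\epsilon$: by Hopf-Rinow, any $y\in X^{(r+\epsilon)}$ is the endpoint of a geodesic issued from $\emptyset$, and truncating this geodesic at arc-length $r$ yields a point of $X^{(r)}$ within distance $\epsilon$ of $y$. For the Prohorov term, the inequality $\mu^{(r)}\leq\mu^{(r+\epsilon)}$ reduces the comparison to controlling $\mu^{(r+\epsilon)}(A)-\mu^{(r)}(A^\eta)$ uniformly in closed sets $A$, and this is dominated by the total-mass difference $\mu(X^{(r+\epsilon)})-\mu(X^{(r)})$; this difference is finite because $\mu$ is locally finite, and it vanishes as $\epsilon\to 0^+$ by monotone convergence on the decreasing family of closed balls.

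For the existence of left limits at $r>0$, I introduce as candidate limit $\cx^{(r-)}=(X^{(r)},d,\emptyset,\mu^{(r-)})$, where $\mu^{(r-)}$ denotes the restriction of $\mu$ to the \emph{open} ball $\{x\in X\ :\ d(\emptyset,x)<r\}$. An entirely analogous computation, still with $Z=X$, gives $d^c_{\text{GHP}}(\cx^{(s)},\cx^{(r-)})\to 0$ as $s\uparrow r$: the Hausdorff distance between $X^{(s)}$ and $X^{(r)}$ is at most $r-s$ by the same geodesic-truncation argument (which also uses that in a length space the closed ball coincides with the closure of the open ball), and the Prohorov distance between $\mu^{(s)}$ and $\mu^{(r-)}$ is dominated by the mass of the spherical shell $\{s<d(\emptyset,\cdot)<r\}$, which shrinks to $0$ by monotone convergence.

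The main subtlety, and also the reason one only obtains c\`adl\`ag rather than continuous behavior, is the comparison of $\cx^{(r)}$ with $\cx^{(r-)}$: they share the same underlying metric space and root, but their measures differ by the restriction of $\mu$ to the sphere $\{x:d(\emptyset,x)=r\}$, which can be positive for a countable set of values of $r$. This spherical mass is precisely the size of the jump at $r$. Once $r\mapsto\cx^{(r)}$ and $r\mapsto\cy^{(r)}$ have both been shown to be c\`adl\`ag in $(\K,d^c_{\text{GHP}})$, the triangle inequality above closes the proof.
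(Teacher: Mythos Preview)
Your argument is correct. The paper itself does not prove Lemma~\ref{lem:reg-d-GHP}: as announced at the start of Section~\ref{sec:GPdist}, the detailed proofs of the results in that section are deferred to the companion paper \cite{Abraham2012a}, so there is no in-paper proof to compare against. Your approach---showing that each of $r\mapsto\cx^{(r)}$ and $r\mapsto\cy^{(r)}$ is c\`adl\`ag in $(\K,d^c_{\text{GHP}})$ by embedding both restrictions in the ambient space $X$, controlling the Hausdorff term via geodesic truncation (available by Hopf--Rinow), and the Prohorov term by the mass of the relevant spherical shell---is the natural one and matches what one finds in \cite{Abraham2012a}. One minor remark: the parenthetical about the closed ball being the closure of the open ball is not actually needed for your Hausdorff estimate, since the geodesic-truncation argument applies directly to any point of the closed ball $X^{(r)}$.
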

This implies that the following function is well defined on $\LL^2$:
\[
 d_{\text{GHP}}(\cx,\cy) = \int_0^\infty \expp{-r} \left(1 \wedge
d^c_{\text{GHP}}\left(\cx^{(r)},\cy^{(r)}\right)
\right) \ dr.
\]

%We then recall the main results of \cite{adh:ghptwms}. 

\begin{theo}
 \label{theo:LL} The function $d_{\text{GHP}}$ defines a metric on $\LL$ and the
space $(\LL, d_{\text{GHP}})$ is a Polish metric space.
\end{theo}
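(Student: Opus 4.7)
The plan is first to verify that $d_{\text{GHP}}$ is a metric on $\LL$ and then to prove separability and completeness. Symmetry, non-negativity and the bound $d_{\text{GHP}}(\cx,\cy)\leq 1$ follow directly from the corresponding properties of $d^c_{\text{GHP}}$ on $\K$, combined with the truncation by $1$ and the integrable weight $\expp{-r}$. The triangle inequality is obtained by applying the triangle inequality for $d^c_{\text{GHP}}$ pointwise in $r$ and integrating against $\expp{-r}\,dr$; measurability of the integrand is guaranteed by Lemma \ref{lem:reg-d-GHP}.

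The separation axiom is the first delicate point. If $d_{\text{GHP}}(\cx,\cy)=0$, then $d^c_{\text{GHP}}(\cx^{(r)},\cy^{(r)})=0$ for Lebesgue-a.e.\ $r\geq 0$, and the c\`adl\`ag property of Lemma \ref{lem:reg-d-GHP} upgrades this to every $r\geq 0$. I would then run a Gromov-type compactness argument: for each $n\in\N$ there is a GHP-isometry $\Phi_n:\cx^{(n)}\to\cy^{(n)}$; fix a countable dense sequence $(x_k)$ in $\cx$ and, using that $\Phi_n(x_k)$ eventually lies in a fixed compact ball of $\cy$ (Hopf-Rinow), extract by a diagonal procedure a subsequence along which $\Phi_n(x_k)$ converges in $\cy$ for every $k$. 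The resulting map is isometric on a dense set and extends uniquely to a root-preserving isometric bijection $\Phi:\cx\to\cy$. Passing to the weak limit in each ball and invoking local finiteness of the measures yields $\Phi_*\mu^X=\mu^Y$, so $\cx=\cy$ in $\LL$.

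For separability, Theorem \ref{theo:dcGHP} provides a countable dense family $(\ck^{(i)})_{i\in\N}$ of $\K$. Any $\cx\in\LL$ satisfies $d_{\text{GHP}}(\cx,\cx^{(R)})\leq \int_R^\infty \expp{-r}\,dr$, so it is enough to approximate compact balls; a countable dense subset of $\LL$ is then constructed from the $\ck^{(i)}$ (viewed themselves as elements of $\LL$), indexed by rational radii.

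Completeness is the main obstacle. Given a Cauchy sequence $(\cx_n)$ in $\LL$, Fubini and the definition of $d_{\text{GHP}}$ show that for Lebesgue-a.e.\ $r\geq 0$ the sequence $(\cx_n^{(r)})_n$ is Cauchy in $(\K,d^c_{\text{GHP}})$, hence converges by Theorem \ref{theo:dcGHP} to some $\cy_r\in\K$. Applying Lemma \ref{lem:reg-d-GHP} at the limit level, the family $(\cy_r)_{r\geq 0}$ is consistent under restriction, in the sense that $(\cy_s)^{(r)}=\cy_r$ for $r\leq s$ (outside a countable set of jumps, which is eliminated using right-continuity). The candidate limit $\cy$ is the inductive limit $\bigcup_{r\geq 0}\cy_r$, endowed with the unique metric, root and locally finite measure making each inclusion $\cy_r\hookrightarrow\cy$ a GHP-isometry onto $\cy^{(r)}$. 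The technical difficulties here are to check that (i) $\cy$ is a complete locally compact length space — completeness and local compactness follow from compactness of each $\cy_r$, while the length-space property is preserved under Gromov-Hausdorff limits of compact length spaces — and (ii) $d_{\text{GHP}}(\cx_n,\cy)\to 0$, which is obtained by dominated convergence applied to $d^c_{\text{GHP}}(\cx_n^{(r)},\cy_r)\to 0$, bounded by $1$ and integrated against $\expp{-r}\,dr$.
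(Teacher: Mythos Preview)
The paper does not actually prove this theorem: at the start of Section~\ref{sec:GPdist} it states that ``the detailed proofs of the results stated in this Section are in \cite{Abraham2012a}'', and Theorem~\ref{theo:LL} is simply asserted with no argument given. So there is no in-paper proof to compare against; the authors defer entirely to the companion paper.

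That said, your sketch follows the expected architecture for such results and the metric, separation, and separability parts are fine. The completeness argument, however, has a genuine gap at the construction of the limit space. You obtain, for a.e.\ $r$, an abstract GHP-isometry class $\cy_r\in\K$, and then want to build $\cy$ as an ``inductive limit $\bigcup_{r\geq 0}\cy_r$''. But the $\cy_r$ are only isometry classes, not nested subsets of a common space: nothing in the $d^c_{\text{GHP}}$-convergence $\cx_n^{(r)}\to\cy_r$ hands you a canonical isometric embedding $\cy_r\hookrightarrow\cy_s$ for $r<s$, nor does it guarantee that $(\cy_s)^{(r)}$ is GHP-isometric to $\cy_r$ (this is precisely where the c\`adl\`ag jumps in Lemma~\ref{lem:reg-d-GHP} bite). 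One standard way to repair this is to pick a fixed increasing sequence $r_k\uparrow\infty$ of continuity radii, use Gromov-type embedding arguments to realize all the $\cx_n^{(r_k)}$ and their limits simultaneously inside a common Polish space (or build compatible near-isometries and pass to a limit via another diagonal extraction), and only then take the union. Your phrase ``outside a countable set of jumps, which is eliminated using right-continuity'' gestures at this but does not supply the construction; as written, the inductive-limit step is not justified.
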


The next result, implies that $d_{\text{GHP}}^c$ and $d_{\text{GHP}}$ defines
the same topology on $\K\cap \LL$. 
\begin{theo}
 \label{theo:K-LL} Let $(\cx_n, n\in \N)$ and $\cx$ be elements of $\K\cap
 \LL$. Then the sequence $(\cx_n, n\in \N)$ converges to $\cx$ in
 $(\K,d_{\text{GHP}}^c)$ if and only if it converges to $\cx$ in
 $(\LL,d_{\text{GHP}})$. 
\end{theo}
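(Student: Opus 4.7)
The strategy is a direct comparison of the two metrics, exploiting the compactness of $\cx$. Set $R := H_{max}(\cx) < \infty$, so that $\cx^{(r)} = \cx$ for all $r \geq R$. The key inequality driving both directions is the following: taking near-optimal isometric embeddings $\Phi_n : \cx_n \hookrightarrow Z$ and $\Phi : \cx \hookrightarrow Z$ into a common Polish space and tracing through the triangle inequality in $Z$ a point $x \in \cx_n^{(r)}$ with $d^{X_n}(x, \emptyset_n) = r$, one obtains
\[
d^c_{\text{GHP}}\bigl(\cx_n^{(r)}, \cx^{(r)}\bigr) \;\geq\; r - R \qquad\text{whenever } R \leq r \leq H_{max}(\cx_n).
\]
Such an $x$ exists for every $r \leq H_{max}(\cx_n)$ since $\cx_n$ is a complete locally compact length space (Hopf--Rinow).

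For the direction $(\Leftarrow)$, assume $d_{\text{GHP}}(\cx_n, \cx) \to 0$. Were $H_{max}(\cx_n)$ unbounded, integrating the key inequality against $\expp{-r}$ on $[R+1, H_{max}(\cx_n)]$ yields, along a subsequence, $d_{\text{GHP}}(\cx_n, \cx) \geq \expp{-(R+1)} - \expp{-H_{max}(\cx_n)} \to \expp{-(R+1)} > 0$, a contradiction. So $H_{max}(\cx_n) \leq D$ for some $D < \infty$. Then for all $r \geq D \vee R$, $\cx_n^{(r)} = \cx_n$ and $\cx^{(r)} = \cx$, and the integrand defining $d_{\text{GHP}}(\cx_n, \cx)$ equals $\expp{-r}\bigl(1 \wedge d^c_{\text{GHP}}(\cx_n, \cx)\bigr)$ on this range. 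Integrating over $[D \vee R, D \vee R + 1]$ produces a lower bound $d_{\text{GHP}}(\cx_n, \cx) \geq c \bigl(1 \wedge d^c_{\text{GHP}}(\cx_n, \cx)\bigr)$ for some constant $c > 0$, forcing $d^c_{\text{GHP}}(\cx_n, \cx) \to 0$.

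For the direction $(\Rightarrow)$, assume $d^c_{\text{GHP}}(\cx_n, \cx) \to 0$. The key inequality yields $H_{max}(\cx_n) \leq R + d^c_{\text{GHP}}(\cx_n, \cx)$, hence $H_{max}(\cx_n) \leq R + 1$ for $n$ large; this handles the tail $r \geq R+1$ since there $d^c_{\text{GHP}}(\cx_n^{(r)}, \cx^{(r)}) = d^c_{\text{GHP}}(\cx_n, \cx) \to 0$. For $r \in (0, R+1)$, I would show $d^c_{\text{GHP}}(\cx_n^{(r)}, \cx^{(r)}) \to 0$ for every $r$ outside an at most countable set. Picking $\epsilon_n \downarrow 0$ with $\epsilon_n > d^c_{\text{GHP}}(\cx_n, \cx)$ and associated embeddings, one verifies the inclusions $\Phi_n(X_n^{(r)}) \subset \Phi(X^{(r+2\epsilon_n)})^{\epsilon_n}$ and $\Phi(X^{(r-2\epsilon_n)}) \subset \Phi_n(X_n^{(r)})^{\epsilon_n}$. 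Since $\cx$ is a length space, Hopf--Rinow provides, for any $y \in \cx^{(r+\delta)}$, a geodesic from $\emptyset$ to $y$ which gives $d^X_{\text{H}}(\cx^{(r+\delta)}, \cx^{(r)}) \leq \delta$; this controls the Hausdorff and root-distance pieces of $d^c_{\text{GHP}}(\cx_n^{(r)}, \cx^{(r)})$. The Prohorov piece is controlled by $\epsilon_n$ plus the $\mu^X$-mass of the annulus $\cx^{(r+2\epsilon_n)} \setminus \cx^{(r-2\epsilon_n)}$, which vanishes at every continuity point of the non-decreasing function $s \mapsto \mu^X(\cx^{(s)})$. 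Dominated convergence (the integrand is bounded by $\expp{-r}$) then yields $d_{\text{GHP}}(\cx_n, \cx) \to 0$.

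The main technical obstacle will be the Prohorov comparison in the $(\Rightarrow)$ direction: one must verify that restricting near-optimally aligned measures $\Phi_{n*}\mu^{X_n}$ and $\Phi_*\mu^X$ to balls of radius $r$ preserves $\epsilon_n$-Prohorov closeness up to an error controlled by the $\mu^X$-mass of a thin annulus near radius $r$. The inevitable countable set of jumps of the monotone $s \mapsto \mu^X(\cx^{(s)})$ is harmless because it has Lebesgue measure zero and dominated convergence absorbs it.
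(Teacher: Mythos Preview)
The paper does not actually prove this theorem: Section~\ref{sec:GPdist} opens by announcing that ``the detailed proofs of the results stated in this Section are in \cite{Abraham2012a}'', and Theorem~\ref{theo:K-LL} is stated without any argument. There is therefore nothing in the present paper to compare your proposal against.

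That said, your strategy is sound and is essentially the natural one. The key inequality $d^c_{\text{GHP}}(\cx_n^{(r)},\cx^{(r)})\geq r-R$ for $R\leq r\leq H_{max}(\cx_n)$ follows exactly as you indicate from the triangle inequality in a common embedding space, using a point of $\cx_n$ at height $r$ (provided by Hopf--Rinow) and the fact that every point of $\cx$ lies within $R$ of the root. The $(\Leftarrow)$ direction is then clean. For $(\Rightarrow)$, your inclusion $\Phi_n(X_n^{(r)})\subset \Phi(X^{(r+2\epsilon_n)})^{\epsilon_n}$ and its mirror are correct, and the passage from $X^{(r+2\epsilon_n)}$ back to $X^{(r)}$ via geodesics (in both $\cx$ and $\cx_n$, which are length spaces by assumption) handles the Hausdorff and root terms. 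The Prohorov term is indeed the delicate point: one checks that for any closed $A$ the mass $\Phi_{n*}\mu_n^{(r)}(A)$ is bounded by $\Phi_*\mu^{(r)}(A^{\epsilon_n})+\epsilon_n$ plus the mass $\mu^X$ assigns to the annulus $X^{(r+2\epsilon_n)}\setminus X^{(r-2\epsilon_n)}$, and symmetrically. This annulus mass vanishes as $n\to\infty$ at every continuity point of $s\mapsto\mu^X(X^{(s)})$, a countable exceptional set that dominated convergence absorbs. Your identification of this as the main technical obstacle is accurate; the argument goes through but the bookkeeping in the Prohorov estimate deserves to be written out in full.
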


\subsubsection{The space of w-trees}

Note that real trees are always length spaces and that complete real trees are
the only complete connected spaces that satisfy the so-called \emph{four-point
condition}:
\begin{equation}
 \forall x_1,x_2,x_3,x_4 \in X,\ d(x_1,x_2)+d(x_3,x_4) \le
(d(x_1,x_3)+d(x_2,x_4) ) \vee (d(x_1,x_4)+d(x_2,x_3)). 
\end{equation}

\begin{definition} 
We denote by $\T$ be the set of (GHP-isometry classes of) complete locally
compact rooted real trees endowed with a locally finite Borel measure,
in short \emph{w-trees}. 
\end{definition}

We deduce the following Corollary from Theorem \ref{theo:LL} and the
four-point condition characterization of real trees. 

\begin{cor}
 \label{cor:T}
The set $\T$ is a closed subset of $\LL$ and 
$(\T,d_{\text{GHP}})$ is a Polish metric space. 
\end{cor}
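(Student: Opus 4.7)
The plan is to get the Polish property for free once closedness is established: by Theorem \ref{theo:LL} the space $(\LL,d_{\text{GHP}})$ is Polish, and any closed subset of a Polish metric space, equipped with the induced metric, is again Polish. So the entire task reduces to showing that $\T$ is a closed subset of $\LL$.

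First I would observe that $\T \subset \LL$: real trees are length spaces (property (1) of the definition provides an isometric segment realizing the distance between any two points), and w-trees are by definition complete and locally compact with a root and a locally finite measure. An element $\cx = (X,d^X,\emptyset^X,\mu^X) \in \LL$ lies in $\T$ if and only if $(X,d^X)$ is a real tree. Since any complete locally compact length space is path-connected (Hopf-Rinow), the four-point condition characterization recalled just above the definition of $\T$ reduces the problem to verifying that, under a $d_{\text{GHP}}$-convergent sequence of w-trees, the limit still satisfies the four-point condition.

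The heart of the argument is then to pass the four-point condition to the limit. Let $(\cx_n, n\in\N) \subset \T$ converge in $(\LL,d_{\text{GHP}})$ to some $\cx = (X,d^X,\emptyset^X,\mu^X) \in \LL$, and fix four points $x_1,\dots,x_4 \in X$. Pick $R \geq \max_i d^X(\emptyset^X, x_i)$. Convergence of $d_{\text{GHP}}(\cx_n,\cx)$ to $0$, together with Lemma \ref{lem:reg-d-GHP} and the integral definition of $d_{\text{GHP}}$, ensures that there exists some $r \geq R$ for which $d^c_{\text{GHP}}(\cx_n^{(r)},\cx^{(r)}) \to 0$. By \reff{f:def}, for each such $n$ there are isometric embeddings of $X^{(r)}$ and $X_n^{(r)}$ into a common Polish space whose images lie within Hausdorff distance $\epsilon_n \to 0$; choosing $y^n_i \in X_n^{(r)}$ within $\epsilon_n$ of the image of $x_i$ yields $|d^{X_n}(y^n_i,y^n_j) - d^X(x_i,x_j)| \leq 2\epsilon_n$ for all $i,j$. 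Since $\cx_n \in \T$, the four-point condition holds for $(y^n_i)_{1 \leq i \leq 4}$, and letting $n \to \infty$ transfers it to $(x_i)_{1 \leq i \leq 4}$. Hence $\cx \in \T$.

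The main (and only real) obstacle is the technical coupling between the two layers of approximation: selecting $r$ at which the compact restrictions converge, and then using the near-optimal embeddings realizing $d^c_{\text{GHP}}(\cx_n^{(r)},\cx^{(r)})$ to approximate the four chosen points simultaneously. There is no deep topological difficulty here, because the four-point condition is a closed condition involving only distances between four points and is therefore stable under any convergence that approximates finite metric configurations up to vanishing error.
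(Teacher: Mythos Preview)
Your proposal is correct and follows exactly the approach the paper indicates: the paper states the corollary as an immediate consequence of Theorem \ref{theo:LL} together with the four-point condition characterization of real trees, and your argument spells out precisely this, showing that the four-point condition is a closed condition under $d_{\text{GHP}}$-convergence. One minor technical point: from $d_{\text{GHP}}(\cx_n,\cx)\to 0$ you only get $d^c_{\text{GHP}}(\cx_n^{(r)},\cx^{(r)})\to 0$ for a.e.\ $r$ along a subsequence, but since you are verifying a property of the limit $\cx$, working along a subsequence is enough and the argument goes through unchanged.
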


\subsection{Height erasing}

We define the restriction operators on the space of w-trees. Let $a\geq 0$.
If $(T,d,\emptyset,\bm)$ is a w-tree, let
\begin{equation}
\label{eq:def-pi}
 \pi_a(T) = \{ x \in T,\
d(\emptyset,x) \le a \}
 \end{equation}
and $(\pi_a(T), d^{\pi_a(T)}, \emptyset,\bm^{\pi_a(T)})$ be the w-tree
constituted of the points of $T$ having height lower than $a$, where
$d^{\pi_a(T)}$ and $\bm^{\pi_a(T)}$ are the restrictions of $d$ and $\bm$ to
$\pi_a(T)$. When there is no confusion, we will also write $\pi_a(T)$ for
$(\pi_a(T), d^{\pi_a(T)}, \emptyset,\bm^{\pi_a(T)})$. We will also write
$T(a)=\{ x\in T,\ d(\emptyset,x)=a\}$ for the level set at height $a$. We say a
w-tree $T$ is bounded if $\pi_a(T)=T$ for some finite $a$. Notice that a tree
$T$ is bounded if and only if $H_{max}(T)$ is finite.

\subsection{Grafting procedure}

We will define in this section a procedure by which we add (graft) w-trees on an
existing w-tree. More precisely, let $T \in \T$ and let $((T_i,x_i),i\in I)$ be
a finite or countable family of elements of $\T\times T$. We define the real
tree obtained by grafting the trees $T_i$ on $T$ at point $x_i$. We set
$\tilde{T} = T \sqcup \left( \bigsqcup_{i\in I} T_i\backslash\{\emptyset^{T_i}\}
\right) $ where the symbol $\sqcup$ means that we choose for the sets $T$ and
$(T_i)_{i\in I}$ representatives of isometry classes in $\T$ which are disjoint
subsets of some common set and that we perform the disjoint union of all these
sets. We set $\emptyset^{\tilde T}=\emptyset^T$. The set $\tilde T$ is endowed
with the following metric $d^{\tilde T}$: if $s,t\in \tilde T$,
\begin{equation*} 
d^{\tilde T} (s,t) = 
\begin{cases} 
d^T(s,t)\ & \text{if}\ s,t\in T, \\
d^T(s,x_i)+d^{T_i}(\emptyset^{T_i},t)\ & \text{if}\
s\in T,\ t\in T_i\backslash\{\emptyset^{T_i}\} , \\ 
d^{T_i}(s,t)\ & \text{if}\ s,t\in T_i\backslash\{\emptyset^{T_i}\} ,\\
d^T(x_i,x_j)+d^{T_j}(\emptyset^{T_j},s)+d^{T_i}
(\emptyset^{T_i},t)\ 
& \text{if}\ i\neq j \ \text{and}\ s\in T_j\backslash\{\emptyset^{T_j}\} ,\ t\in
T_i\backslash\{\emptyset^{T_i}\} .
\end{cases} 
\end{equation*}
We define the mass measure on $\tilde T$ by:
\[
\mathbf{m}^{\tilde T}=\mathbf{m}^T+\sum_{i\in I}\ind_{
 T_i\backslash\{\emptyset^{T_i}\}} \mathbf{m}^{T_i}+
\mathbf{m}^{T_i}(\{\emptyset^{T_i}\}) \delta_{x_i},
\]
where $\delta_x$ is the Dirac mass at point $x$. It is clear that the metric
space $(\tilde{T},d^{\tilde T},\emptyset^{\tilde T})$ is still a rooted complete
real tree. However, it is not always true that $\tilde{T}$ remains locally
compact (it still remains a length space anyway), or, for that matter, that
$\mathbf{m}^{\tilde T}$ defines a
locally finite measure (on $\tilde T$). So, we will have to check that $(\tilde
T,d^{\tilde T},\emptyset^{\tilde T}, \mathbf{m}^{\tilde T} )$ is a w-tree in the
particular cases we will consider.

\par We will use the following notation:
\begin{equation} 
(\tilde T,d^{\tilde T},\emptyset^{\tilde T},
\mathbf{m}^{\tilde T} ) = T \circledast_{i\in I}(T_i,x_i) 
\label{def:gref}
\end{equation}
and write $\tilde T$ instead of $(\tilde T,d^{\tilde T},\emptyset^{\tilde T},
\mathbf{m}^{\tilde T} ) $ when there is no confusion.

\subsection{Real trees coded by functions}
\label{sec:code}
Lévy trees are natural generalizations of Aldous's Brownian tree, where the
underlying process coding for the tree (reflected Brownian motion in Aldous's
case) is replaced by a certain functional of a Lévy process, the \emph{height
process}. Le Gall and Le Jan \cite{LeGall1998b} and Duquesne and Le Gall
\cite{Duquesne2005a} showed how to generate random real trees using the
excursions of a Lévy process. We shall briefly recall this construction, in
order to introduce the pruning procedure on Lévy trees. Let us first work in a
deterministic setting. 
\par Let $f$ be a continuous non-negative function defined on $[0,+\infty)$,
such that $f(0)=0$, with compact support. We set:
\[
\sigma^f=\sup\{t; f(t)>0\},
\]
with the convention $\sup\emptyset=0$. Let $d^f$ be the non-negative
function defined by:
\[ 
d^f(s,t) = f(s) + f(t) - 2 \inf_{u\in [s\wedge t , s\vee t]} f(u). 
\]
It can be easily checked that $d^f$ is a semi-metric on $[0,\sigma^f]$. One can
define the equivalence relation associated to $d^f$ by $s\sim t$ if and only if
$d^f(s,t)=0$. Moreover, when we consider the quotient space
\begin{equation*}
 T^f=[0,\sigma^f]_{/ \sim}
\end{equation*}
and, noting again $d^f$ the induced metric on $T^f$ and rooting $T^f$ at
$\emptyset^f$, the equivalence class of 0, it can be checked that the
space $(T^f,d^f,\emptyset^f)$ is a compact rooted real tree. We denote
by $p^f$ the canonical projection from $[0,\sigma^f]$ onto $T^f$, which
is extended by $p^f(t)=\emptyset^f$ for $t\geq \sigma^f$. Notice that
$p^f$ is continuous. We define $\mathbf{m}^{f}$, the mass measure on
$T^f$ as the image measure on $T^f$ of the Lebesgue measure on $[0,
\sigma^f]$ by $p^f$. We consider the (compact) w-tree $(T^f, d^f,
\emptyset^f, \mathbf{m}^f)$, which we shall denote $T^f$.

\par It should be noted that, if $x\in T^f$ is an equivalence class, the common
value of $f$ on all the points in this equivalence class is exactly
$d^f(\emptyset,x)=H_x$. Notice that, in this setting, $H_{max}(T^f)=\|
f\|_\infty$ where $\| f\|_\infty$ stands for the uniform norm of $f$. 
\par We have the following elementary result (see Lemma 2.3 of
\cite{Duquesne2005a} when dealing with the Gromov-Hausdorff metric instead of
the Gromov-Hausdorff-Prohorov metric). 
\begin{prop}
 \label{prop:cont-TH}
 Let $f,g$ be two compactly supported, non-negative continuous functions with
$f(0)=g(0)=0$. Then:
\begin{equation}
 d_{GHP}^c(T^f,T^g) \le 6 \| f-g \|_\infty + | \sigma^f-\sigma^g |.
\end{equation}
\end{prop}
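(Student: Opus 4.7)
The plan is to exhibit a common Polish metric space $(Z,d^Z)$ into which $T^f$ and $T^g$ both embed isometrically while preserving their roots, and then control, in that embedding, each of the three terms appearing in the infimum defining $d_{\text{GHP}}^c(T^f,T^g)$. The construction is the classical correspondence one used by Duquesne and Le Gall for the Gromov--Hausdorff metric, with the mass measures and the possible discrepancy of the support lengths carried along.

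Without loss of generality I take $\sigma^f\le\sigma^g$. Since $p^f$ is extended by $p^f(t)=\emptyset^f$ for $t\ge\sigma^f$, I consider the correspondence
\[
R=\bigl\{(p^f(t),p^g(t)):t\in[0,\sigma^g]\bigr\}\subset T^f\times T^g,
\]
whose two projections are surjective. The key estimate is the distortion bound
\[
\mathrm{dis}(R):=\sup_{s,t\in[0,\sigma^g]}\bigl|d^f(p^f(s),p^f(t))-d^g(p^g(s),p^g(t))\bigr|\le 4\|f-g\|_\infty,
\]
obtained by writing each of the two distances in the form $\tilde f(s)+\tilde f(t)-2\inf_{[s\wedge t,s\vee t]}\tilde f$, with $\tilde f$ the extension of $f$ by zero on $[\sigma^f,\sigma^g]$ (which preserves $\|f-g\|_\infty$ since $f\equiv 0$ on $[\sigma^f,+\infty)$), and bounding uniformly $|\tilde f(u)-g(u)|$ and the difference of infima by $\|f-g\|_\infty$.

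I then equip $Z=T^f\sqcup T^g$ with the function $d^Z$ that restricts to $d^f$ on $T^f\times T^f$, to $d^g$ on $T^g\times T^g$, and, for $x\in T^f$ and $y\in T^g$, is defined by
\[
d^Z(x,y)=\inf_{(a,b)\in R}\bigl(d^f(x,a)+d^g(b,y)\bigr)+\tfrac12\mathrm{dis}(R).
\]
The delicate step is to check that $d^Z$ is genuinely a metric. Symmetry is clear and positive definiteness is forced by the additive slack $\tfrac12\mathrm{dis}(R)$; the only nontrivial case of the triangle inequality is $x,y\in T^f$ with $z\in T^g$ (and its mirror), which after applying the triangle inequality in $T^f$ and in $T^g$ reduces to the one-sided distortion bound $d^f(p^f(s),p^f(t))-d^g(p^g(s),p^g(t))\le\mathrm{dis}(R)$, valid by definition. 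The embeddings $T^f,T^g\hookrightarrow Z$ are then isometric and root-preserving.

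With $(Z,d^Z)$ at hand three short estimates conclude. First, $d^Z(\emptyset^f,\emptyset^g)\le\tfrac12\mathrm{dis}(R)\le 2\|f-g\|_\infty$, using $(\emptyset^f,\emptyset^g)=(p^f(0),p^g(0))\in R$. Second, $d_\text{H}^Z(T^f,T^g)\le 2\|f-g\|_\infty$, since every $x\in T^f$ (resp.\ $y\in T^g$) admits a correspondence partner at $d^Z$-distance at most $\tfrac12\mathrm{dis}(R)$. Third, for any closed $A\subset Z$ and $t\in[0,\sigma^g]$, the bound $d^Z(p^f(t),p^g(t))\le 2\|f-g\|_\infty$ gives $p^f(t)\in A\Rightarrow p^g(t)\in A^{2\|f-g\|_\infty}$ and conversely; since $\bm^f=p^f_*(\mathrm{Leb}|_{[0,\sigma^f]})$ and $\bm^g=p^g_*(\mathrm{Leb}|_{[0,\sigma^g]})$, this yields $\bm^f(A)\le\bm^g(A^{2\|f-g\|_\infty})$ and, absorbing the extra Lebesgue mass on $[\sigma^f,\sigma^g]$, $\bm^g(A)\le\bm^f(A^{2\|f-g\|_\infty})+|\sigma^f-\sigma^g|$, hence $d_\text{P}^Z(\bm^f,\bm^g)\le 2\|f-g\|_\infty+|\sigma^f-\sigma^g|$. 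Summing the three bounds yields the announced $6\|f-g\|_\infty+|\sigma^f-\sigma^g|$. The main obstacle is the triangle inequality for $d^Z$ in the mixed case, where $\mathrm{dis}(R)$ enters decisively; once this is done, the three bounds fall out by direct computation.
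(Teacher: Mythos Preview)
Your argument is correct and follows essentially the same route as the paper: the Duquesne--Le Gall correspondence $R=\{(p^f(t),p^g(t))\}$ with $\mathrm{dis}(R)\le 4\|f-g\|_\infty$, the standard metric $d^Z$ on the disjoint union, and then the three bounds $d_H^Z\le 2\|f-g\|_\infty$, $d^Z(\emptyset^f,\emptyset^g)\le 2\|f-g\|_\infty$, and $d_P^Z\le 2\|f-g\|_\infty+|\sigma^f-\sigma^g|$ (the paper phrases the Prohorov step via preimages $I=(p^f)^{-1}(A)$ rather than closed sets in $Z$, but the content is identical). One small caveat: your claim that positive definiteness of $d^Z$ is ``forced by the additive slack $\tfrac12\mathrm{dis}(R)$'' fails when $\mathrm{dis}(R)=0$; the paper handles this by simply assuming $\mathrm{dis}(R)>0$, and in the degenerate case one adds an $\epsilon>0$ to the cross-distance and lets $\epsilon\downarrow 0$.
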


\begin{proof} The Gromov-Hausdorff distance can be evaluated using
 correspondences, see \cite{Burago2001}, section 7.3. A correspondence
 between two metric spaces $(E_1, d_1)$ and $(E_2, d_2)$ is a subset
 $\crr$ of $E_1\times E_2$ such that for $\delta\in \{1,2\}$ the
 projection of $\crr$ on $E_\delta$ is onto: $\{x_\delta; (x_1, x_2)\in
 \crr\}=E_\delta$. The distortion of $\crr$ is defined by:
\[
\dis (\crr)= \sup\left\{|d_1(x_1, y_1) - d_2(x_2,y_2)|;\; (x_1, y_1)\in
 \crr, (x_2, y_2)\in \crr\right\}.
\]
Let $Z=E_1 \sqcup E_2$ by the disjoint union of $E_1$ and $E_2$ and
consider the function $d^Z$ defined on $Z^2$ by: $d^Z= d_\delta$ on $E_\delta^2$
for $\delta\in
\{1,2\}$ and for $x_1\in E_1$, $x_2\in E_2$:
\[
d^Z(x_1, x_2)=\inf\left\{ d_1(x_1,y_1)+\inv{2}\dis (\crr)+ d_2(y_2, x_2);
 \; (y_1, y_2)\in \crr\right\}.
\]
Then if $\dis (\crr)>0$, the function $d^Z$ is a metric on $Z$. And we have:
\[
d^Z_H(E_1, E_2)\leq \inv{2} \dis (\crr).
\]

Let $f,g$ be two compactly supported, non-negative continuous functions
with $f(0)=g(0)=0$. Following \cite{Duquesne2005a}, we consider the
following correspondence between $\ct^f$ and $\ct^g$:
\[
\crr=\left\{(x^f,x^g); \; x^f=p^f(t) \text{ and } x^g=p^g(t) \text{ for
 some } t\geq 0\right\},
\]
and we have $\dis(\crr)\leq 4 \norm{f-g}_\infty $ according to the proof
of Lemma 2.3 in \cite{Duquesne2005a}. Notice $(\emptyset^f,
\emptyset^g)\in \crr$. Thus, with the notation above 
and $E_1=T^f$, $E_2=T^g$, we get:
\[
d_H^Z(T^f, T^g)\leq 2 \norm{f-g}_\infty 
\quad\text{and}\quad
d^Z(\emptyset^f, \emptyset^g)\leq 2 \norm{f-g}_\infty .
\]

Then, we consider the Prohorov distance between $\mathbf{m}^f$
and $\mathbf{m}^g$. Let $A^f$ be a Borel set of $T^f$. We set $I=\{t\in
[0, \sigma^f]; \; p^f(t)\in A\}$. By definition of $\bm^f$, we have
$\bm^f(A^f)=\text{Leb} (I)$. We set $A^g=p^g(I \cap [0, \sigma^g] )$ so
that $\bm^g(A^g) = \text{Leb} (I \cap [0, \sigma^g] )\geq \text{Leb}(I)
- |\sigma^f -\sigma^g|$. By construction, we also have that for any
$x^g\in A^g$, there exists $t\in I$ such that $p^g(t)=x^g$ and
$d^Z(x^g,x^f)=\inv{2}\dis(\crr)$, with $x^f=p^f(t)\in A^f$. This implies
that $A^g \subset
(A^f)^r$ for any $r>\inv{2}\dis(\crr)$. We deduce that:
\[
\bm^f(A^f)\leq \bm^g(A^g)+ |\sigma^f -\sigma^g|
\leq \bm^g\left((A^f)^r\right)+ |\sigma^f -\sigma^g|.
\]
The same is true with $f$ and $g$ replaced by $g$ and $f$. We deduce
that:
\[
d^Z_P(\bm^f, \bm^g)\leq \inv{2}\dis(\crr)+|\sigma^f -\sigma^g|
\leq 2 \norm{f-g}_\infty +|\sigma^f -\sigma^g|.
\]
We get:
\[
d_H^Z(T^f, T^g)+d^Z(\emptyset^f, \emptyset^g)+ d^Z_P(\bm^f, \bm^g)\leq 6
\| f-g \|_\infty + | \sigma^f-\sigma^g |. 
\]
This gives the result. 
\end{proof}

\begin{rem}
 \label{rem:Assumption2}
 We could define the correspondence for more general functions $f$: lower
semi-continuous functions that satisfy the intermediate values property (see
\cite{Duquesne2002}). In that case, the associated real tree is not even locally
compact (hence not necessarily proper). But the measurability of the mapping
$f\mapsto T^f$ is not clear in this general setting, that is why we only
consider continuous function $f$ here and thus will assume the Grey condition
(see next Section) for Lévy trees.
\end{rem}
\subsection{Branching mechanisms}

Let $\Pi$ be a $\sigma$-finite measure on $(0,+\infty )$ such that we have $\int
(1 \wedge x^2) \Pi(dx) < \infty$. We set:
\begin{equation}
 \label{eq:pi-q}
\Pi_{\theta} (dr)=\expp{-\theta r}\ \Pi(dr).
\end{equation}
Let $\Theta'$ be the set of $\theta\in \R$ such that $ \int_{(1,+\infty)}
\Pi_\theta (dr) < +\infty$. If $\Pi=0$, then $\Theta'=\R$. We also set
$\theta_\infty = \inf \Theta'$. It is obvious that $[0,+\infty )\subset
\Theta'$, $\theta_\infty \leq 0$ and either $\Theta'=[\theta_\infty , +\infty )$
or $\Theta'=(\theta_\infty , +\infty )$. 
\par Let $\alpha\in \R$ and $\beta \ge 0$. We consider the branching mechanism
$\psi$ associated with $(\alpha,\beta,\Pi)$:
\begin{equation}
\label{eq:def-psi}
 \psi(\lambda) = \alpha \lambda + \beta \lambda^2 + \int_{(0,+\infty)}
(\expp{-\lambda r} -1 + \lambda r
\ind_{\{r<1\}}) \Pi(dr) , \quad \lambda\in \Theta'.
\end{equation}
Notice that the function $\psi$ is smooth and convex over $(\theta_\infty ,
+\infty )$. We say that $\psi$ is conservative if for all $\varepsilon>0$:
\[
 \int_{(0,\varepsilon]} \frac{du}{|\psi(u)|} = +\infty. 
\] 
A sufficient condition for $\psi$ to be conservative is to have
$\psi'(0+)>-\infty$. This last condition is actually equivalent to
$\int_{(1,\infty)} r \Pi(dr) < \infty$. We will always make the following
assumption. 
\begin{hypothese} 
The function $\psi$ is conservative and we have $\beta>0$ or $\int_{(0,1)}
 \ell \Pi(d\ell)=+\infty $.
\end{hypothese}

The branching mechanism is said to be sub-critical (resp. critical,
super-critical) if $\psi'(0+) >0$ (resp. $\psi'(0+)=0$, $\psi'(0+) <0$). We say
that $\psi$ is (sub)critical if it is critical or sub-critical. 
\par We introduce the following branching mechanisms $\psi_\theta$ for
$\theta\in \Theta'$:
\begin{equation}
 \label{eq:def-psi-q}
\psi_\theta(\lambda)=\psi(\lambda+\theta)-\psi(\theta), \quad
\lambda+\theta\in \Theta'. 
\end{equation}
Let $\Theta^\psi$ be the set of $\theta\in \Theta'$ such that $\psi_\theta$ is
conservative. Obviously, we have: 
\[
[0,+\infty )\subset \Theta^\psi \subset \Theta' \subset \Theta^\psi
\cup \{\theta_\infty\}. 
\]
If $\theta\in \Theta^\psi$, we set:
\begin{equation}
 \label{eq:def-barq}
\bar \theta=\max\{ q\in \Theta^\psi; \psi(q)=\psi(\theta)\}.
 \end{equation} 
 We can give an alternative definition of $\bar \theta$ if Assumption 1
 holds. Let $\theta^*$ be the unique positive root of $\psi'$ if it
 exists. Notice that $\theta^*=0$ if $\psi$ is critical and that
 $\theta^*$ exists and is positive if $\psi$ is super-critical. If
 $\theta^*$ exists, then the branching mechanism $\psi_{\theta^*}$ is
 critical. We set $\Theta_*^\psi$ for $[\theta^*,+\infty )$ if
 $\theta^*$ exists and $\Theta_*^\psi=\Theta^\psi$ otherwise. The
 function $\psi$ is a one-to-one mapping from $\Theta_*^\psi$ onto
 $\psi(\Theta^\psi_*)$. We write $\psi^{-1}$ for the inverse of the
 previous mapping. The set $\{ q\in \Theta^\psi;
 \psi(q)=\psi(\theta)\}$ has at most two elements and we have:
\[
\bar\theta=\psi^{-1} \circ \psi(\theta).
\] 
In particular, if $\psi_\theta$ is (sub)critical we have $\bar \theta=
\theta$ and if $\psi_\theta$ is super-critical then we have
$\theta<\theta^*<\bar \theta$.

We will later on consider the following assumption. 
\begin{hypothese} (Grey condition) The branching mechanism is such that:
 \[ 
\int^{+\infty} \frac{du}{\psi(u)} < \infty. 
\]
\end{hypothese}

Let us remark that Assumption 2 implies that $\beta>0$ or $\int_{(0,1)}
 r \Pi(dr)=+\infty $.

\subsection{Connections with branching processes}

Let $\psi$ be a branching mechanism satisfying Assumption 1. A continuous state
branching process (CSBP) with branching mechanism $\psi$ and initial mass
$x>0$ is the càdlàg $\R_+$-valued Markov process $(Z_a,\ a\ge 0)$ whose
distribution is characterized by $Z_0=x$ and:
\begin{equation*}
 \Esp [ \exp(-\lambda Z_{a+a'}) | Z_a ] = \exp(-Z_a u(a',\lambda) ),
 \quad \lambda\geq 0, 
\end{equation*}
 where $(u(a,\lambda),a\ge0, \lambda>0)$ is the unique non-negative solution to
the integral equation:
\begin{equation}
\label{eq:def-u}
 \int_{u(a,\lambda)}^\lambda \frac{dr}{\psi(r)} = a\ ;\ \
u(0,\lambda)=\lambda.
\end{equation}
The distribution of the CSBP started at mass $x$ will be noted $\bP^\psi_x$. For
a detailed presentation of CSBPs, we refer to the monographs
\cite{Kyprianou2006},\cite{Lambert2008a} or \cite{Li2011}.
\par In this context, the conservative assumption is equivalent to the CSBP not
blowing up in finite time, and Assumption 2 is equivalent to the strong
extinction time, $\inf\{a; Z_a=0\}$, being a.s. finite. If Assumption 2 holds,
then for all $h>0$, $\bP_x^\psi(Z_{h} >0) = \exp(-x b(h))$, where
$b(h)=\lim_{\lambda \rightarrow +\infty } u(h,\lambda)$. In particular $b(h)$ is
such that
\begin{equation}
\label{bh} 
\int_{b(h)}^\infty \frac{dr}{\psi(r)} = h. 
\end{equation}

Let us now describe a Girsanov transform for CSBPs introduced in
\cite{Abraham2009b} related to the shift of the branching mechanism $\psi$
defined by \reff{eq:def-psi-q}. Recall notation $\Theta^\psi$ and $\theta_\infty
$ from the previous Section. For $\theta\in \Theta^\psi$, we consider the
process $M^{\psi,\theta}=(M^{\psi,\theta}_a, a\geq 0)$ defined by: 
 \begin{equation} 
\label{eq:mart-Ma}
M_a^{\psi,\theta} = \exp\Big(\theta x-\theta
Z_a-\psi(\theta)
\int_0^a Z_s ds\Big) .
\end{equation} 

\begin{theorem}[Girsanov transformation for CSBPs, \cite{Abraham2009b}]
\label{GirCSBP} 
Let $\psi$ be a branching mechanism satisfying Assumption 1. Let $(Z_a,a\ge 0)$
be a CSBP with branching mechanism $\psi$ and let $\cf = (\cf_a,a\ge 0)$ be its
natural filtration. Let $\theta\in \Theta^\psi$ such that either $\theta\geq 0$
or $\theta<0$ and $\int_{(1,+\infty )} r \Pi_\theta(dr)<+\infty $. Then we have
the following:
\begin{enumerate}
 \item The process $M^{\psi,\theta}$ is a $\cf$-martingale under
$\bP_x^\psi$.
 \item Let $a,x\ge 0$. On $\cf_a$, the probability measure
$\bP_x^{\psi_\theta}$ is absolutely continuous w.r.t. $\bP_x^\psi$, and 
\[
\frac{{d\bP_x^{\psi_\theta}}_{|\cf_a}}{{d\bP_x^{\psi}}_{|\cf_a}}
= M_a^{\psi,\theta}. 
\]
\end{enumerate}
\end{theorem}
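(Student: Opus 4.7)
The plan is to prove the martingale property first, then derive the Radon--Nikodym statement by matching the finite-dimensional Laplace transforms under the candidate new measure with those of $\bP_x^{\psi_\theta}$.

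\textbf{Step 1 (Martingale property).} The key lemma is the following Feynman--Kac identity: for all $z\ge 0$ and $a\ge 0$,
\[
\bE_z^\psi\left[\exp\left(-\theta Z_a - \psi(\theta)\int_0^a Z_s\,ds\right)\right] = e^{-\theta z}.
\]
To establish it, I would compute $\bE_z^\psi[\exp(-\lambda Z_a-\eta\int_0^a Z_s\,ds)] = e^{-z v(a)}$ where $v$ is characterized, via an infinitesimal argument using the generator of $Z$ acting on exponentials (recall $\mathcal G e^{-\lambda z}=z\psi(\lambda)e^{-\lambda z}$), by the ODE $v'(t)=\eta-\psi(v(t))$, $v(0)=\lambda$. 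Specializing to $\lambda=\theta$, $\eta=\psi(\theta)$, the constant function $v\equiv\theta$ solves the equation, and uniqueness gives the claim. The martingale property of $M^{\psi,\theta}$ then follows from the Markov property of $Z$ applied at time $a$:
\[
\bE_x^\psi[M_{a+a'}^{\psi,\theta}\mid\cf_a] = e^{\theta x - \psi(\theta)\int_0^a Z_r dr}\,\bE_{Z_a}^\psi\!\left[e^{-\theta Z_{a'}-\psi(\theta)\int_0^{a'} Z_r dr}\right] = M_a^{\psi,\theta}.
\]
The integrability conditions in the hypothesis on $\theta$ are what make these quantities finite: when $\theta\ge 0$ the martingale is bounded by $e^{\theta x}$, while when $\theta<0$ the assumption $\int_{(1,+\infty)}r\,\Pi_\theta(dr)<+\infty$ guarantees that $\psi_\theta$ is well-defined and that $Z_a$ has exponential moments of order $|\theta|$, which is precisely what is needed to make the ODE analysis valid on $\Theta^\psi$.

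\textbf{Step 2 (Girsanov change of measure).} Once the martingale property is established, Kolmogorov's extension theorem produces, for each $x\ge 0$, a unique probability measure $\widetilde\bP_x$ on $(\Omega,\cf_\infty)$ such that $d\widetilde\bP_x|_{\cf_a}/d\bP_x^\psi|_{\cf_a}=M_a^{\psi,\theta}$ for every $a\ge 0$. To identify $\widetilde\bP_x$ with $\bP_x^{\psi_\theta}$, it suffices to show that under $\widetilde\bP_x$ the process $Z$ is a CSBP with branching mechanism $\psi_\theta$ started at $x$, i.e.\ that its one-dimensional Laplace transforms match. Computing,
\[
\widetilde\bE_x[e^{-\lambda Z_a}] = e^{\theta x}\,\bE_x^\psi\!\left[e^{-(\theta+\lambda)Z_a - \psi(\theta)\int_0^a Z_s ds}\right] = e^{\theta x}\,e^{-x v(a)},
\]
with $v'(t)=\psi(\theta)-\psi(v(t))$, $v(0)=\theta+\lambda$. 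Setting $w(t)=v(t)-\theta$, one has $w(0)=\lambda$ and $w'(t)=-\psi_\theta(w(t))$, so $w(a)=u_{\psi_\theta}(a,\lambda)$ by \reff{eq:def-u} applied to $\psi_\theta$. Therefore $\widetilde\bE_x[e^{-\lambda Z_a}]=e^{-x u_{\psi_\theta}(a,\lambda)}$, matching the CSBP law $\bP_x^{\psi_\theta}$. The Markov property under $\widetilde\bP_x$ for the full finite-dimensional distributions follows inductively, since $M^{\psi,\theta}$ factorizes multiplicatively along the filtration.

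\textbf{Main obstacle.} The delicate point is the justification of the Feynman--Kac ODE for $\theta<0$: one must ensure that the expression $\bE_z^\psi[e^{-\theta Z_a - \psi(\theta)\int_0^a Z_s ds}]$ is finite and that the ODE characterization is valid without blowing up before time $a$. This is where the assumption $\int_{(1,+\infty)}r\,\Pi_\theta(dr)<+\infty$ enters crucially, as it controls the upper tails of $Z_a$ under $\bP_x^\psi$ and guarantees that $\psi_\theta$ is conservative, so that the associated CSBP does not explode and the ODE solution $v$ stays inside $\Theta^\psi$ throughout $[0,a]$.
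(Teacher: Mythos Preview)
The paper does not prove this theorem; it is quoted from \cite{Abraham2009b} and stated without proof. Your sketch is the standard argument and is essentially correct: the Feynman--Kac ODE $v'=\eta-\psi(v)$ with $\lambda=\theta$, $\eta=\psi(\theta)$ has constant solution $v\equiv\theta$, which combined with the Markov property gives the martingale; the identification of the tilted measure then follows by the substitution $w=v-\theta$ reducing to the $\psi_\theta$ semigroup equation. One small remark: for part (2) as stated you do not need Kolmogorov's extension, since the claim is only about the restriction to $\cf_a$ for fixed $a$, and the density $M_a^{\psi,\theta}$ is given explicitly.
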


\subsection{The height process}

Let $(X_t, t\ge 0)$ be a Lévy process with Laplace exponent $\psi$ satisfying
Assumption 1. This assumption implies that a.s. the paths of $X$ have infinite
total variation over any non-trivial interval. The distribution of the Lévy
process will be noted $\Pr^\psi(dX)$. It is a probability measure on the
Skorokhod space of real-valued càdlàg processes. For the remainder of this
section, \textbf{we will assume that $\psi$ is (sub)critical}. 

For $t\ge 0$, let us write $\hat{X}^{(t)}$ for the time-returned process:
\begin{equation*}
 \forall\ 0\le s< t,\ \hat{X}^{(t)}_s= X_t-X_{(t-s)_-}
\end{equation*}
and $\hat{X}^{(t)}_t=X_t$. Then $(\hat{X}^{(t)}_s,0\le s\le t)$ has same
distribution as the process $(X_s,0\le s\le t)$. We will also write
$\hat{S}^{(t)}_s = \sup_{[0,s]} \hat{X}^{(t)}_r$ for the supremum process of
$\hat{X}^{(t)}$. 
 
\begin{proposition}[The height process, \cite{Duquesne2002}] Let $\psi$ be a
(sub)critical branching mechanism satisfying Assumption 1. There exists
a lower semi-continuous process $H=(H_t,t\ge 0)$ taking values in $[0,+\infty
]$, with the intermediate values property, which is a local time at 0, at time
$t$, of the process $\hat{X}^{(t)}-\hat{S}^{(t)}$, such that the following
convergence holds in probability:
\begin{equation*}
H_t = \lim_{\epsilon \downarrow 0} \frac{1}{\epsilon} \int_0^t
\ind_{\{I_s^t \le X_s \le I_s^t +\epsilon\}} ds 
\end{equation*}
where $I_s^t=\inf_{s\le r \le t} X_r$. Furthermore, if Assumption 2
holds, then the process $H$ admits a continuous modification. 
\end{proposition}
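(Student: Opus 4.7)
The statement is essentially the foundational result of Le Gall--Le Jan and Duquesne--Le Gall, so my plan is to reconstruct it along the lines of their approach, focusing on the probabilistic content rather than on the technical estimates.

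First I would establish the identification that makes the formula natural. For fixed $t$, a direct computation using the time reversal $\hat X^{(t)}_s = X_t - X_{(t-s)_-}$ shows that the reflected process $\hat X^{(t)} - \hat S^{(t)}$ at time $s$ equals (up to sign and a shift) $X_{(t-s)_-} - I^{t}_{t-s}$, so that the set $\{s\in[0,t] : I^t_s = X_s\}$ is in bijection with the zero set of $\hat X^{(t)} - \hat S^{(t)}$. Assumption~1 forces $X$ to have infinite variation on every interval, hence $0$ is regular for itself for the reflected process and a local time $L^{(t)}$ at $0$ is well defined up to a normalization; the candidate height is $H_t := L^{(t)}_t$. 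The main task is then to prove that this $L^{(t)}_t$ coincides with the limit in the displayed formula.

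For this I would use the standard approximation of local times by occupation densities: for the reflected Lévy process, $\epsilon^{-1}\int_0^t \ind_{\{0\le \hat X^{(t)}_r-\hat S^{(t)}_r\le \epsilon\}}\,dr$ converges in $L^2$ to a constant multiple of $L^{(t)}_t$, and one fixes the normalization so that the constant is $1$ (this is the choice that will later give the right exponential formula for excursion heights). Translating the integral back through the time reversal and the bijection above yields exactly $\epsilon^{-1}\int_0^t \ind_{\{I_s^t\le X_s\le I_s^t+\epsilon\}}\,ds$. The key input is the regenerative nature of the zero set of $\hat X^{(t)}-\hat S^{(t)}$ under $\Pr^\psi$, together with the explicit form of the Laplace exponent of the inverse local time of $X-I$, which allows one to identify the normalizing constant via $\psi$.

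Next I would construct a version of the process $(H_t, t\ge 0)$ with the stated path regularity. Lower semi-continuity is obtained by expressing $H_t$ as a countable supremum of continuous functionals of $X$ (namely the approximating integrals at rational $\epsilon$), then taking a $\liminf$-regularization; the intermediate value property can be read off from the fact that any branching time of the underlying genealogy corresponds to a level crossed by $H$ on every sufficiently short neighbouring interval, which is itself a consequence of the excursion structure of $X-I$ and the characterization of branching points through jump times of $X$.

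The hardest step, and the one where Assumption~2 enters, is the existence of a continuous modification. Here I would follow the Duquesne--Le Gall strategy: establish moment bounds of the form $\Esp[(H_t - H_s)^p]\le C|t-s|^\gamma$ for some $p,\gamma$ with $\gamma>1$, and then apply Kolmogorov's continuity criterion. Such moments are produced by controlling the number of ``excursion ancestors'' between times $s$ and $t$, which reduces to estimating $\Esp[\psi'^{-1}(b(h))^k]$ type quantities where $b(h)$ is defined by \reff{bh}; the Grey condition $\int^\infty du/\psi(u)<\infty$ ensures $b(h)<\infty$ for every $h>0$ and provides the necessary integrability. This moment estimate is the technical heart of the proof and the only place where Grey is genuinely needed; without it $H$ may be unbounded on every interval. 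Once continuity is obtained, lower semi-continuity and the intermediate value property become automatic for this modification, completing the statement.
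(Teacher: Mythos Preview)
The paper does not prove this proposition: it is stated with a citation to \cite{Duquesne2002} and no proof is given. Immediately after the statement the text simply continues with ``From now on, we always assume that Assumptions 1 and 2 hold\ldots''. So there is nothing to compare your proposal against in this paper.

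That said, your sketch is broadly in the spirit of the Duquesne--Le Gall argument: the time-reversal identification of $\{s:X_s=I_s^t\}$ with the zero set of $\hat X^{(t)}-\hat S^{(t)}$, the occupation-time approximation of local time, and the use of a Kolmogorov-type criterion under the Grey condition are indeed the main ingredients in \cite{Duquesne2002}. However, several steps you describe would need substantial work to make rigorous: the $L^2$ convergence of the occupation approximation is not automatic for general reflected L\'evy processes and in \cite{Duquesne2002} goes through excursion theory and an explicit computation involving $\psi$; your proposed derivation of the intermediate value property via ``branching times'' is circular since the genealogical structure is defined \emph{from} $H$; and the continuity argument in \cite{Duquesne2002} does not proceed via direct moment bounds on increments of $H$ but rather through H\"older estimates for the exploration process and a comparison argument. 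If you want a self-contained proof, you should consult Chapter~1 of \cite{Duquesne2002} directly.
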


From now on, we always assume that Assumptions 1 and 2 hold, and we
always work with this continuous version of $H$.
The process $H$ is called the height process. 
\par For $x>0$, we consider the stopping time $ \tau_x = \inf \Big\{ t\ge 0,\
I_t \le -x \Big\} $, where $I_t=I^t_0$ is the infimum process of $X$. We denote
by $\Pr^\psi_x(dH) $ the distribution of the stopped height process $(H_{t\wedge
\tau_x}, t\ge 0)$ under $\Pr^\psi$, defined on the space
$\mathcal{C}_+([0,+\infty))$ of non-negative continuous functions on
$[0,+\infty)$. The (sub)criticality of the branching mechanism entails
$\tau_x<\infty$ $\Pr^\psi$-a.s., so that under $\Pr^\psi_x(dH)$, a.s. the height
process has compact support.
\subsection{The excursion measure}

The height process is not a Markov process, but it has the same zero sets as
$X-I$ (see \cite{Duquesne2002}, Paragraph 1.3.1), so we can develop an excursion
theory based on the latter. By standard fluctuation theory, it is easy to see
that 0 is a regular point for $X-I$ and that $-I$ is a local time of $X-I$ at 0.
We denote by $\Ex^\psi$ the associated excursion measure. As such, $\Ex^\psi$ is
a $\sigma$-finite measure. Under $\Pr^\psi_x$ or $\Ex^\psi$, we set:
\[
\sigma(H) = \int_0^\infty \ind_{\{H_t \neq 0\}} dt.
\]
When there is no risk of confusion, we will write $\sigma$ for $\sigma(H)$.
Notice that, under $\Pr_x^\psi$, $\sigma=\tau_x$ and that under $\Ex^\psi$,
$\sigma$ represents the lifetime of the excursion. Abusing notations, we will
write $\P_x^\psi(dH)$ and $\N^\psi[dH]$ for the distribution of $H$ under
$\P_x^\psi$ or $\N^\psi$. Let us also recall the Poissonian decomposition of the
measure $\Pr^\psi_x$. Under $\Pr^\psi_x$, let $(a_j,b_j)_{j\in J}$ be the
excursion intervals of $X-I$ away from 0. Those are also the excursion intervals
of the height process away from $0$. For $j\in J$, we shall denote by
$H^{(j)}:[0,\infty)\rightarrow \R_+$ the corresponding excursion, that is
\[ 
\ H^{(j)}_t = H_{(a_j+t)\wedge b_j}, \quad t\geq 0.
\]
\begin{proposition}[\cite{Duquesne2005a}] 
\label{Buisson} 
Let $\psi$ be a (sub)critical branching mechanism satisfying Assumption 1. Under
$\Pr^\psi_x$, the random point measure $\cn = \sum_{j\in J}
\delta_{H^{(j)}}(dH)$ is a Poisson point measure with intensity
$x\Ex^\psi[dH]$.
\end{proposition}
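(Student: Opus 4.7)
The plan is to derive this statement as a direct application of It\^o's excursion theory to the reflected process $X - I$, and then transfer the resulting Poisson structure to the height process via the coding map $X \mapsto H$. Several of the ingredients are already in place in the excerpt: under Assumption 1 the point $0$ is regular and recurrent for $X-I$, the process $-I$ is a continuous local time of $X-I$ at $0$, and the zero sets of $H$ and $X - I$ coincide, so that the excursion intervals $(a_j, b_j)_{j \in J}$ of $H$ away from $0$ are exactly those of $X - I$.

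First, I would apply It\^o's excursion theorem to $X-I$ together with its local time $-I$. This yields, under $\Pr^\psi$, that the random measure with atoms $(-I_{a_j}, X^{(j)})_{j}$ is a Poisson point measure on $\R_+ \times \ce$, where $\ce$ denotes the space of L\'evy excursions, with intensity $d\ell \otimes \Ex^\psi(d\gamma)$. Here $\Ex^\psi$ is by definition the associated It\^o excursion measure on $\ce$, and $\Ex^\psi[dH]$ as used in the excerpt is, by definition, its image measure under the coding map sending a L\'evy excursion to the corresponding height excursion. Pushing forward excursion by excursion thus produces a Poisson point measure on $\R_+ \times \mathcal{C}_+([0,+\infty))$ with atoms $(-I_{a_j}, H^{(j)})_{j}$ and intensity $d\ell \otimes \Ex^\psi[dH]$.

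Second, I would restrict to excursions completed by the stopping time $\tau_x = \inf\{t : I_t \le -x\}$. By continuity of $-I$, this stopping time is the first passage time of $-I$ at level $x$, so the set of excursion indices $j$ with $b_j \le \tau_x$ coincides with the set of atoms whose local-time coordinate $-I_{a_j}$ lies in $[0, x]$. The standard restriction and projection properties of Poisson point measures then identify $\cn = \sum_{j \in J} \delta_{H^{(j)}}(dH)$ as a Poisson point measure on $\mathcal{C}_+([0,+\infty))$ with intensity $x\, \Ex^\psi[dH]$, which is the stated proposition.

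The only technical point requiring real care is the measurability step, namely that $H^{(j)}$ is a function of the L\'evy excursion $X^{(j)}$ alone and not of the trajectory of $X$ strictly before $a_j$. This is exactly what allows the push-forward to produce independent marks distributed according to $\Ex^\psi[dH]$, and it is built into the Duquesne--Le Gall construction of the height process, which provides an intrinsic definition of $H_t$ for $t \in (a_j, b_j)$ in terms of the shifted excursion $(X_{a_j + s} - X_{a_j})_{0 \le s \le t - a_j}$ only. Once this measurability is taken for granted (as it implicitly is in the excerpt, since $\N^\psi[dH]$ is defined as a measure on $\mathcal{C}_+([0,+\infty))$), the proposition follows from It\^o's excursion theorem combined with the Poisson restriction property.
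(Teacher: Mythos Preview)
Your argument is correct and is essentially the standard one: It\^o's excursion theorem applied to the reflected L\'evy process $X-I$ with local time $-I$, transported to the height process via the excursion-wise coding, and then restricted to local-time values in $[0,x]$. Note, however, that the paper does not supply its own proof of this proposition; it is quoted from Duquesne and Le Gall \cite{Duquesne2005a} (and ultimately \cite{Duquesne2002}), where precisely the argument you outline is carried out, including the point that the height process on an excursion interval is a measurable functional of the corresponding L\'evy excursion alone.
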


\subsection{Local times of the height process}

\begin{prop}[\cite{Duquesne2002}, Formula (36)]
\label{prop:LT} 
Let $\psi$ be a (sub)critical branching mechanism satisfying Assumption 1. Under
$\N^\psi$, there exists a
jointly measurable process $(L^a_s, a\geq 0, s\geq 0)$ which is
continuous and non-decreasing in the variable $s$ such that,
$$\forall s\ge 0,\ L_s^0=0$$
and for every $t\geq 0$, for every $\delta>0$ and every $a>0$
$$\lim_{\varepsilon\rightarrow 0}\N^\psi \left[\ind_{\{\sup
 H>\delta\}}\sup_{0\le s\leq t\wedge\sigma}\left|\varepsilon^{-1}
\int_0^s \ind_{\{ a<H_r\leq a+\varepsilon\}}\;dr - L^a_s\right|\right]=0.$$ 
\end{prop}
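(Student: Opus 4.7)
The plan is to follow the classical construction of local times for the height process as in \cite{Duquesne2002}: for each fixed level $a>0$, build a non-decreasing process $s \mapsto L^a_s$ as an $L^1(\N^\psi)$-limit of the occupation-density approximations, then select a jointly measurable modification regular in both variables.

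The first step is to transfer the convergence problem from the $\sigma$-finite excursion measure $\N^\psi$ to the finite probability $\Pr_x^\psi$ via the Poisson decomposition of Proposition \ref{Buisson}. Under $\Pr_x^\psi$, the height process is a Borel functional of the underlying L\'evy process $X$ (through the time-reversed reflected process $\hat S^{(t)} - \hat X^{(t)}$), so the approximating random variables
$$L^{a,\varepsilon}_s = \frac{1}{\varepsilon}\int_0^s \ind_{\{a < H_r \leq a+\varepsilon\}}\,dr$$
may be rewritten as occupation quantities of a reflected L\'evy process. Fluctuation theory for L\'evy processes with paths of infinite total variation then provides the $L^1$-convergence of $L^{a,\varepsilon}_s$ to a limit $L^a_s$, and the limit is identified via a Ray-Knight-type theorem with (a functional of) a CSBP with branching mechanism $\psi$ started from mass $x$. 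Transferring this back under $\N^\psi$ restricted to $\{\sup H > \delta\}$, which satisfies $\N^\psi[\sup H > \delta] = b(\delta) < \infty$ thanks to Assumption 2 and \reff{bh}, yields the $L^1(\N^\psi)$-convergence stated in the Proposition.

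The second step upgrades pointwise-in-$s$ convergence to uniform convergence on $[0, t \wedge \sigma]$ and produces a jointly measurable modification. Uniform convergence follows from a Doob-type maximal inequality applied to the non-decreasing process $s \mapsto L^{a,\varepsilon}_s$ together with $L^2$ control of $L^{a,\varepsilon}_\sigma - L^a_\sigma$ coming from the CSBP identification. Continuity and monotonicity in $s$ pass to the limit, and the identity $L^0_s = 0$ is immediate because $\{r : H_r = 0\}$ has zero Lebesgue measure under $\N^\psi$. For joint measurability in $(a,s)$, one constructs $L^a$ simultaneously for $a$ in a countable dense subset of $\R_+$ by a diagonal argument, then extends in $a$ by right-continuity using $L^p$-bounds on increments $\N^\psi[|L^{a'}_\sigma - L^a_\sigma|^p\,;\,\sup H > \delta]$, derived from exponential moments of the governing CSBP, to apply Kolmogorov's continuity criterion in the $a$-variable.

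The main obstacle is the first step under $\N^\psi$: since this measure is only $\sigma$-finite, one is forced to work on each event $\{\sup H > \delta\}$, and it is precisely here that Assumption 2 is essential, both to ensure continuity of $H$ and to guarantee $b(\delta) < \infty$ so that $\{\sup H > \delta\}$ has finite $\N^\psi$-mass. Without the Grey condition the level sets of $H$ could be pathological and the occupation-density approach would not yield a meaningful limit. Once the approximation estimate on $\{\sup H > \delta\}$ is established, the remaining arguments (maximal inequalities, Kolmogorov continuity, monotone class upgrading of convergence modes) are standard, as carried out in detail in \cite[Section 1.3]{Duquesne2002}.
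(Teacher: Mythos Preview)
The paper does not prove this proposition; it is quoted verbatim from \cite{Duquesne2002} as a reference result, with no argument supplied. So there is no ``paper's own proof'' to compare your sketch against.

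That said, your sketch contains a genuine error of hypothesis. The proposition is stated under Assumption~1 only, yet you assert that ``it is precisely here that Assumption~2 is essential, both to ensure continuity of $H$ and to guarantee $b(\delta)<\infty$''. This is wrong on both counts. The local-time construction in \cite[Section~1.3]{Duquesne2002} is carried out under the infinite-variation hypothesis alone (the second half of Assumption~1), with $H$ merely lower semi-continuous; the Grey condition is \emph{not} used there. Your appeal to $b(\delta)$ via \reff{bh} is circular, since that formula is only available under Assumption~2. In fact, without the Grey condition one has $\N^\psi[\sup H>\delta]=+\infty$ for every $\delta>0$, so the finiteness you invoke simply fails; the actual argument in \cite{Duquesne2002} works under the probability $\Pr^\psi$ for the L\'evy process and transfers to $\N^\psi$ by a different route than the one you outline.

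A second, smaller issue: you propose to obtain joint measurability in $(a,s)$ via Kolmogorov's continuity criterion in the $a$-variable. This cannot succeed as stated, because $a\mapsto L^a_\sigma$ is in general only c\`adl\`ag, with jumps at the heights of infinite branching points when $\Pi\neq 0$ (cf.\ the paragraph following the proposition, citing Lemma~3.3 of \cite{Duquesne2005a}). The regularity in $a$ is weaker than what Kolmogorov would give, and the construction in \cite{Duquesne2002} proceeds differently.
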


Moreover, by Lemma 3.3. of \cite{Duquesne2005a}, the process
$(L_\sigma^a,\ a\ge 0)$ has a c\`adl\`ag modification under $\N^\psi$
with no fixed discontinuities.

\subsection{(Sub)critical Lévy trees}

Let $\psi$ be a (sub)critical branching mechanism satisfying Assumptions
1 and 2. Let $H$ be the height process defined under $\P^\psi_x$ or
$\N^\psi$. We consider the so-called Lévy tree $\ct^H$ which is the
random w-tree coded by the function $H$, see Section
\ref{sec:code}. Notice that we are indeed within the framework of proper
real trees, since Assumption 2 entails compactness of $\ct^H$. The
measurability of the random variable $\ct^H$ taking values in $\T$
follows from Proposition \ref{prop:cont-TH} and Theorem
\ref{theo:K-LL}. When there is no
confusion, we shall write $\ct$ for $\ct^H$. Abusing notations, we will
write $\P_x^\psi(d\ct)$ and $\N^\psi[d\ct]$ for the distribution on $\T$
of $\ct=\ct^H$ under $\P_x^\psi(dH)$ or $\N^\psi[dH]$. By construction,
under $\P^\psi_x$ or under $\N^\psi$, we have that the total mass of the
mass measure on $\ct$ is given by:
\begin{equation} 
\label{eq:s=la}
\mathbf{m}^{\ct}(\ct) = \sigma.
\end{equation}

Proposition \ref{Buisson} enables us to view the measure $\Ex^\psi[d\ct]$ as
describing a single Lévy tree. Thus, we will mostly work under this excursion
measure, which is the distribution of the (isometry class of the) w-tree $\ct$
described by the height process under $\Ex^\psi$. In order to state the
branching property of a L\'evy tree, we must first define a local time at level
$a$ on the tree. Let $(\ct^{i,\circ},i\in I)$ be the trees that were cut off by
cutting at level $a$, namely the connected components of the set
$\ct\setminus\pi_a(\ct)$. If $i\in I$, then all the points in $\ct^{i,\circ}$
have the same MRCA $x_i$ in $\ct$ which is precisely the point where the tree
was cut off. We consider the compact tree $\ct^{i}=\ct^{i,\circ} \cup \{x_i\}$
with the root $x_i$, the metric $d^{\ct^i}$, which is the metric $d^\ct$
restricted to $\ct^i$, and the mass measure $\mathbf{m}^{\ct^i}$, which is the
mass measure $\mathbf{m}^\ct$ restricted to $\ct^i$. Then $(\ct^i, d^{\ct^i},
x_i, \mathbf{m}^{\ct^i})$ is a w-tree. Let
\begin{equation} 
\label{eq:branchement}
\cn_a^{\ct}(dx,d\ct') = \sum_{i\in I}
\delta_{(x_i,\ct^i)}(dx,d\ct') 
\end{equation} 
be the point measure on $\ct(a)\times \EsAr$ taking account of the cutting
points as well as the trees cut away. The following theorem gives the structure
of the decomposition we just described. From excursion theory, we deduce that
$b(h)=\Ex^\psi[H_{max}(\ct) > h]$, where $b(h)$ solves \reff{bh}. An easy
extension of \cite{Duquesne2005a} from real trees to w-trees gives the following
result. 

\begin{theorem}[\cite{Duquesne2005a}] \label{Defla} 
Let $\psi$ be a (sub)critical branching mechanism satisfying Assumptions 1 and
2. There exists a $\ct$-measure valued process $(\ell^a, a\geq 0)$
càdlàg for the weak topology on finite measure on $\ct$ such that
$\N^\psi\text{-a.e.}$:
\begin{equation} 
\label{eq:int-la}
\mathbf{m}^{\ct}(dx) = \int_0^\infty \ell^a(dx) da,
\end{equation} 
$\ell^0=0$, $\inf\{a > 0 ; \ell^a = 0\}=\sup\{a \geq 0 ; \ell^a\neq
0\}=H_{\text{max}}(\ct)$ and for every fixed $a\ge 0$,
$\N^\psi\text{-a.e.}$: 
 \begin{itemize}
 \item $\ell^a$ is supported on
$\ct(a)$,
 \item We have for every bounded
continuous function $\phi$ on $\ct$:
\begin{align} 
\langle\ell^a,\phi \rangle 
& = \lim_{\epsilon \downarrow 0}
\frac{1}{b(\epsilon)} \int \phi(x) \ind_{\{h(\ct')\ge
\epsilon\}} \cn_a^{\ct}(dx, d\ct') \\
\label{LaMes}
 & = \lim_{\epsilon \downarrow 0} \frac{1}{b(\epsilon)} \int \phi(x)
\ind_{\{h(\ct')\ge \epsilon\}}
\cn_{a-\epsilon}^{\ct}(dx, d\ct'),\ \text{if}\ a>0.
\end{align}
 \end{itemize}
 Furthermore, we have the branching property: for every $a>0$, the conditional
distribution of the point measure $\cn_a^{\ct}(dx,d\ct')$ under
$\Ex^\psi[d\ct|H_{\text{max}}(\ct)>a]$, given $\pi_a(\ct)$, is that of a Poisson
point measure on $\ct(a)\times \EsAr$ with intensity
$\ell^a(dx)\Ex^\psi[d\ct']$.
\end{theorem}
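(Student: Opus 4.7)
The strategy is to leverage the corresponding statement for the height process already established in \cite{Duquesne2005a} and transport it to the w-tree level via the canonical projection $p=p^H:[0,\sigma]\to\ct^H$. The candidate measure is defined as the pushforward by $p$ of the differential $dL^a_s$ of the height-process local time from Proposition \ref{prop:LT}: for any bounded Borel $\phi$ on $\ct$,
\[
\langle\ell^a,\phi\rangle=\int_0^\sigma \phi(p(s))\,dL^a_s.
\]
Several properties then follow immediately. The identity $\ell^0=0$ comes from $L^0_s=0$; the support condition $\ell^a$-$\supp\subset\ct(a)$ holds because $dL^a_s$ is concentrated on $\{s:H_s=a\}$ while $H_s=d^\ct(\emptyset,p(s))$; and the mass integration formula $\bm^\ct=\int_0^\infty\ell^a\,da$ follows from the occupation density formula $\int_0^\infty dL^a_s\,da=\ind_{\{H_s\ne 0\}}\,ds$ combined with the definition of $\bm^\ct$ as the pushforward by $p$ of Lebesgue measure on $[0,\sigma]$.

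For the approximation formula \reff{LaMes}, I would recall that each excursion of $H$ above level $a$ corresponds bijectively via $p$ with a subtree $\ct^i$ appearing in $\cn_a^\ct$, its maximum minus $a$ being equal to $H_{\max}(\ct^i)$. The classical approximation of $L^a_s$ by $b(\epsilon)^{-1}$ times the number of excursions above level $a$ with height exceeding $\epsilon$, established in \cite{Duquesne2005a}, then transports under $p$ to the stated limit for $\langle\ell^a,\phi\rangle$; the second version with $\cn_{a-\epsilon}^\ct$ is obtained analogously by considering excursions above $a-\epsilon$ whose height reaches $\epsilon$. The càdlàg property of $a\mapsto\ell^a$ in the weak topology reduces, by taking $\phi$ continuous and bounded, to the càdlàg property of $a\mapsto L_\sigma^a$ (Lemma 3.3 of \cite{Duquesne2005a}) together with uniform continuity of $\phi\circ p$ and the support constraint forcing mass to live on the level set.

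Finally, the branching property is the substantive point. For fixed $a>0$, the conditional law under $\N^\psi[\,\cdot\mid H_{\max}(\ct)>a]$ of the excursions of $H$ above level $a$, given the trajectory of $H$ before first hitting $a$ and after last leaving $a$, is that of a Poisson point measure whose intensity uses $L^a_\sigma$ as an atomic-free base measure and $\N^\psi$ as the excursion law. This is the standard regenerative description of the height process in \cite{Duquesne2005a}, and its real-tree counterpart gives precisely the Poissonian form of $\cn_a^\ct$ with intensity $\ell^a(dx)\,\Ex^\psi[d\ct']$ once the base measure is identified via the pushforward. The only additional step needed to pass from real trees to w-trees is to verify that each subtree mass measure $\bm^{\ct^i}$ is the pushforward by $p$ of Lebesgue measure restricted to the corresponding excursion interval of $H$, which is immediate from the construction of $\bm^\ct$. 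The main technical obstacle I foresee lies in checking consistency of the measure pushforwards across levels so that the identification of $\ell^a$ as the intensity base measure is simultaneously valid in all the statements; this boils down to interchanging the pushforward by $p$ with the excursion-theoretic conditioning, which is routine given the continuity of $H$ under Assumption 2.
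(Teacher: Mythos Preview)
The paper does not give a proof of this theorem; it is cited from \cite{Duquesne2005a} with only the brief remark preceding it (``An easy extension of \cite{Duquesne2005a} from real trees to w-trees gives the following result'') and the comment afterward that $\ell^a$ ``can also be defined as the image of the measure $d_sL_s^a(H)$ by the canonical projection $p^H$ \ldots\ so the above statement is in fact the translation of the excursion theory of the height process in terms of real trees.'' Your proposal does precisely this translation---defining $\ell^a$ as the pushforward of $dL^a_s$ under $p^H$ and checking each clause via the corresponding height-process fact---so it is correct and matches the approach the paper indicates.
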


The measure $\ell^a$ will be called the \emph{local time} measure of $\ct$ at
level $a$. In the case of L\'evy trees, it can also be defined as the image of
the measure $d_sL_s^a(H)$ by the canonical projection $p^H$ (see
\cite{Duquesne2002}), so the above statement is in fact the translation of the
excursion theory of the height process in terms of real trees. This definition
shows that the local time is a function of the tree $\ct$ and does not depend on
the choice of the coding height function. It should be noted that Equation
(\ref{LaMes}) implies that $\ell^a$ is measurable with respect to the
$\sigma$-algebra generated by $\pi_a(\ct)$. 

The next theorem, also from \cite{Duquesne2005a}, relates the discontinuities of
the process $(\ell^a,a\ge 0)$ to the infinite nodes in the tree. Recall
$\mathrm{Br}_\infty(\ct)$ denotes the set of infinite nodes in the Lévy tree
$\ct$.

\begin{theorem} [\cite{Duquesne2005a}] 
Let $\psi$ be a (sub)critical branching mechanism satisfying Assumptions 1 and
2. The set $\{ d(\emptyset,x),\ x\in \mathrm{Br}_\infty(\ct) \}$ coincides
$\Ex^\psi$-a.e. with the set of discontinuity times of the mapping $a\mapsto
\ell^a$. Moreover, $\Ex^\psi$-a.e., for every such discontinuity time $b$, there
is a unique $x_b\in \mathrm{Br}_\infty(\ct)\cap\ct(b)$, and
\[ \ell^b = \ell^{b-} + \Delta_b \delta_{x_b}, \]
where $\Delta_b>0$ is called \emph{mass} of the node $x_b$ and can be obtained
by the approximation 
\begin{equation} \Delta_b = \lim_{\epsilon \rightarrow 0}
\frac{1}{b(\epsilon)}
n(x_b,\epsilon), \label{DefMas} \end{equation}
where $n(x_b,\epsilon)=\int \ind_{\{x=x_b\}}(x)\ind_{\{H_{max}(\ct') >
\epsilon\}}(\ct') \cn_b^\ct(dx,d\ct')$ is the number of sub-trees originating
from $x_b$ with height larger than $\epsilon$. 
\end{theorem}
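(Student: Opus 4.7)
I would begin by transferring the statement to the height process $H$. Under $\N^\psi$, the local time measure $\ell^a$ is the pushforward of the measure $d_s L^a_s(H)$ on $[0,\sigma]$ by the canonical projection $p^H$; in particular, the discontinuities of the measure-valued process $a \mapsto \ell^a$ (for the weak topology) correspond to jump times of the càdlàg process $a \mapsto L^\sigma_a(H)$, i.e.\ the total mass process $a \mapsto \ell^a(\ct)$, which has no fixed discontinuities by the remark after Proposition~\ref{prop:LT}. Hence the set $B$ of discontinuity times is an $\N^\psi$-a.e.\ countable random subset of $(0,+\infty)$.

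Next I would identify these discontinuity levels with heights of infinite branching points of $\ct = \ct^H$. A classical result of \cite{Duquesne2002} asserts that the infinite branching points of $\ct$ are in one-to-one correspondence with the jump times of the underlying L\'evy process $X$, via $s \mapsto p^H(s)$ applied at jump times. At such a time $s$, a local analysis of $H$ around $s$ (going through $X - I$ and its excursion decomposition) shows that the map $a \mapsto L^a_s(H)$ has a jump of some size $\Delta > 0$ at $a = H_s$, which translates into the atom $\Delta \delta_{x_b}$ in $\ell^b - \ell^{b-}$, with $x_b = p^H(s)$ and $b = H_s$. Conversely, at any fixed deterministic level $a$, the set $\ct(a)$ is $\N^\psi$-a.e.\ disjoint from $\mathrm{Br}(\ct)$, so atoms of $\ell^{b-} \ne \ell^{b}$ can only arise from such jumps of $X$; at each $b \in B$ exactly one jump of $X$ contributes, yielding uniqueness of $x_b \in \mathrm{Br}_\infty(\ct) \cap \ct(b)$ and the desired identity $\ell^b = \ell^{b-} + \Delta_b \delta_{x_b}$.

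To establish \reff{DefMas}, I would extend the branching property of Theorem~\ref{Defla} from fixed levels to the random discontinuity levels $b \in B$ via a stopping-line argument (approaching $b$ through a dense deterministic sequence and using the càdlàg property of $\ell^\cdot$). Conditionally on $\pi_b(\ct)$, the restriction of $\cn_b^\ct(dx,d\ct')$ to $\{x_b\}\times \EsAr$ is then a Poisson point measure on $\EsAr$ with intensity $\Delta_b\,\Ex^\psi[d\ct']$. Consequently $n(x_b,\epsilon)$ is Poisson with mean $\Delta_b \cdot \Ex^\psi[H_{max}(\ct') > \epsilon] = \Delta_b \, b(\epsilon)$, using the identity $b(h) = \Ex^\psi[H_{max}(\ct) > h]$ recalled before Theorem~\ref{Defla}. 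Since $b(\epsilon) \to \infty$ as $\epsilon \downarrow 0$, a law of large numbers for Poisson variables applied along a countable sequence $\epsilon_n \downarrow 0$, combined with monotonicity of $\epsilon \mapsto n(x_b,\epsilon)$ to pass from sequential to continuous limits, yields $n(x_b,\epsilon)/b(\epsilon) \to \Delta_b$ a.s.

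The main obstacle is the second paragraph: precisely matching jumps of $X$ to infinite nodes of $\ct$ and extracting the atom structure of $\ell^b - \ell^{b-}$ from the defining limit \reff{LaMes} at a \emph{random} level $b$, where the approximation $\cn_{a-\epsilon}^\ct$ must be analyzed carefully to isolate the contribution of the single infinite node above $x_b$. Once this identification is available, the mass formula is a clean consequence of the Poisson structure given by the branching property.
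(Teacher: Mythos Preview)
The paper does not prove this theorem: it is stated as a result quoted from \cite{Duquesne2005a}, with no proof given here. So there is no ``paper's own proof'' to compare your proposal against.

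That said, your sketch is a reasonable outline of the argument one finds in \cite{Duquesne2005a}. The identification of infinite branching points with jump times of the underlying L\'evy process $X$ (via $p^H$) and the fact that each such jump creates an atom in the local-time measure at the corresponding level is exactly the mechanism used there. Your third paragraph, deriving \reff{DefMas} from the Poisson structure of $\cn_b^\ct$ restricted to $\{x_b\}\times\T$ together with a law of large numbers, is also the standard route. The genuine work, as you correctly flag, lies in your second paragraph: making precise the correspondence between jumps of $X$ and atoms of $\ell^b-\ell^{b-}$ at a random level $b$, and showing that no other mechanism produces discontinuities. In \cite{Duquesne2005a} this goes through a careful analysis of the exploration process and its jump structure rather than directly through the approximation \reff{LaMes}, so if you were to write this out in full you would want to consult that reference for the technical details.
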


\subsection{Decomposition of the Lévy tree}

We will frequently use the following notation for the following measure on $\T$:
\begin{equation} 
\label{def:NN0}
 \bN^{\psi}[\ct\in \bullet] = 2\beta \Ex^{\psi}[\ct\in \bullet] +
\int_{(0,+\infty)} r\Pi(dr)\; \Pr_r^{\psi}[\ct
\in \bullet]. 
\end{equation}
where $\psi$ is given by \reff{eq:def-psi}. 
\par The decomposition of a (sub)critical Lévy tree $\ct$ according to a spine
$\llbracket \emptyset, x \rrbracket$, where $x\in \ct$ is a leaf picked at
random at level $a>0$, that is according to the local time $\ell^a(dx)$, is
given in Theorem 4.5 in \cite{Duquesne2005a}. Then by integrating with respect
to $a$, we get the decomposition of $\ct$ according to a spine $\llbracket
\emptyset, x \rrbracket$, where $x\in \ct$ is a leaf picked at random on $\ct$,
that is according to the mass measure $\bm^\ct$. Therefore, we will state this
decomposition without proof.
\par Let $x\in \ct$ and $\{x_i, i\in I_x\}$ the set $\mathrm{Br}(\ct)\cap
\llbracket \emptyset, x \rrbracket$ of branching point on the spine $\llbracket
\emptyset, x \rrbracket$. For $i\in I_x$, we set:
\[
\ct^i=\ct \setminus \left(\ct^{(x,x_i)} \cup
 \ct^{(\emptyset,x_i)}\}\right), 
\]
where $\ct^{(y,x_i)}$ is the connected component of $\ct \setminus\{x_i\}$
containing $y$. We let $x_i$ be the root of $\ct^i$. The metric and measure on
$\ct^i$ are respectively the restriction of $d^\ct$ to $\ct^i$ and the
restriction of $\mathbf{m}^\ct$ to $\ct^i\backslash\{x_i\}$. By construction, if
$x$ is a leaf, we have:
\[
\ct= \llbracket \emptyset, x \rrbracket \circledast_{i\in I_x}(\ct^i,x_i) ,
\]
where $ \llbracket \emptyset, x \rrbracket $ is a w-tree with root $\emptyset$,
metric and mass measure the restrictions of $d^\ct$ and $\mathbf{m}^\ct$ to $
\llbracket \emptyset, x \rrbracket $.

We consider the point measure on $[0, H_x]\times \T$ defined by:
\[
\cm_x=\sum_{i\in i_x} \delta_{(H_{x_i},\ct^i)}.
\]
\begin{theo}[\cite{Duquesne2005a}] 
 \label{theo:T/feuille}
Let $\psi$ be a (sub)critical branching mechanism satisfying Assumptions 1 and
2. We have for
any non-negative measurable function $F$ defined on $[0,+\infty )\times \T$:
\[
\N^\psi\left[\int \bm^\ct(dx) F( H_x , \cm_x)
\right]
=\int_0^{\infty }da \; \expp{-\psi'(0) a }\; \E\left[F\left( a,
 \sum_{i\in I} \ind_{\{z_i\leq a\}} \delta_{(z_i, \bar 
 \ct^i)}\right) \right],
\]
where under $\E$, $\sum_{i\in I} \delta_{(z_i, \bar \ct^i)}(dz, dT)$ is a
Poisson point measure on $[0,+\infty )\times \T$ with intensity $ dz \;
\bN^{\psi}[dT]$.
\end{theo}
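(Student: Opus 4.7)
The plan is to deduce this from the level-$a$ Bismut-type spine decomposition of Duquesne--Le Gall (Theorem 4.5 in \cite{Duquesne2005a}), which treats the case where the distinguished leaf $x$ is picked according to the local time measure $\ell^a(dx)$ at a fixed level $a$, and then to integrate out in $a$ using the disintegration $\bm^\ct(dx)=\int_0^\infty \ell^a(dx)\, da$ from Theorem \ref{Defla}. This is natural because the mass measure is by construction the integral of the local times, and it is the most efficient way to lift a single-level statement to a statement about the whole tree.

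First I would recall (or cite) the level-$a$ version. Under $\N^\psi$, for a non-negative measurable $G$ on $\T^{\bN}$ where we encode a Poisson measure,
\begin{equation*}
\N^\psi\left[\int \ell^a(dx)\, G(\cm_x)\right]
= \expp{-\psi'(0) a}\; \E\!\left[G\!\left(\sum_{i\in I}\ind_{\{z_i\le a\}}\delta_{(z_i,\bar\ct^i)}\right)\right],
\end{equation*}
where $\sum_{i}\delta_{(z_i,\bar\ct^i)}$ is a Poisson point measure on $[0,+\infty)\times\T$ with intensity $dz\,\bN^\psi[dT]$. The prefactor $\expp{-\psi'(0)a}$ comes from the expected total mass $\N^\psi[\langle\ell^a,1\rangle]=\expp{-\psi'(0)a}$, and the Poisson structure above the spine is the branching property combined with the description of the nodes on the spine (binary branchings contributing a $2\beta$ term and infinite branchings contributing the $r\Pi(dr)$ term, which is exactly the content of the measure $\bN^\psi$ in \reff{def:NN0}). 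Restricting the Poisson measure to $\{z_i\le a\}$ reflects the fact that branching points on the spine $\llbracket\emptyset,x\rrbracket$ have heights in $[0,a]$.

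Next I would integrate this identity against $da$. Using Theorem \ref{Defla}, write
\begin{equation*}
\N^\psi\left[\int \bm^\ct(dx)\, F(H_x,\cm_x)\right]
=\N^\psi\left[\int_0^\infty da \int \ell^a(dx)\, F(H_x,\cm_x)\right]
= \int_0^\infty da\; \N^\psi\left[\int \ell^a(dx)\, F(H_x,\cm_x)\right],
\end{equation*}
where Fubini is applied to the non-negative measurable integrand. Since $\ell^a$ is $\N^\psi$-a.e.\ supported on $\ct(a)$, for $\ell^a$-a.e.\ $x$ we have $H_x=a$, so inside the inner expectation $F(H_x,\cm_x)$ can be replaced by $F(a,\cm_x)$. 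Applying the level-$a$ identity with $G(\mu)=F(a,\mu)$ yields exactly the claimed formula.

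The only non-routine ingredient is the level-$a$ Bismut decomposition itself, which is the substantive content borrowed from \cite{Duquesne2005a}; one must verify that the translation from the height-process statement there (formulated in terms of the excursion measure and the local time $L^a$) to the real-tree statement above is transparent via the canonical projection $p^H$. The rest of the argument, namely disintegrating along level sets and identifying $H_x$ with $a$ on the support of $\ell^a$, is then a direct consequence of the identities $\bm^\ct = \int \ell^a\, da$ and $\supp \ell^a\subset\ct(a)$, combined with Fubini's theorem.
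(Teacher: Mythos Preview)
Your proposal is correct and follows exactly the route the paper indicates: the paper states the theorem without proof, explaining just before it that one takes the level-$a$ spine decomposition from Theorem~4.5 of \cite{Duquesne2005a} and integrates in $a$ using $\bm^\ct=\int_0^\infty \ell^a\,da$ from Theorem~\ref{Defla}. Your write-up simply makes those two steps explicit (plus the identification $H_x=a$ on $\supp\ell^a$ and Fubini), which is precisely what the authors had in mind.
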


\subsection{CSBP process in the Lévy trees}

Lévy trees give a genealogical structure for CSBPs, which is precised in the
next Theorem. We consider the process $\cz=(\cz_a, a\geq 0)$ defined by:
\[
\cz_a=\langle \ell^a,1\rangle.
\]
If needed we will write $\cz_a(\ct)$ to emphasize that $\cz_a $ corresponds to
the tree $\ct$. 

\begin{theorem}[CSBP in Lévy trees, \cite{Duquesne2002} and
 \cite{Duquesne2005a}] 
 \label{CSBPLevy} 
 Let $\psi$ be a (sub)critical branching mechanism satisfying Assumptions 1 and
2, and let $x>0$. The process $\cz$ under $\Pr_x^\psi$ is distributed as the
CSBP $Z$ under $\mathbf{P}^\psi_x$.
 \end{theorem}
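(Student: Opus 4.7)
The plan is to verify that $(\cz_a, a \geq 0)$ under $\Pr_x^\psi$ has the Laplace functional characterizing a CSBP of mechanism $\psi$ started at $x$, namely that $\cz_0 = x$ and for $a, a' \geq 0$, $\lambda \geq 0$,
\[
\Er_x^\psi[e^{-\lambda \cz_{a+a'}} \mid \sigma(\cz_s, s\leq a)] = e^{-\cz_a u(a', \lambda)},
\]
with $u$ solving \reff{eq:def-u}. Combined with Markovianity and continuity in $a$ (from the c\`adl\`ag regularity of $\ell^a$ stated just after Proposition \ref{prop:LT}), this determines the law of $\cz$ as $\mathbf{P}^\psi_x$.

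First I would unpack the branching property of Theorem \ref{Defla}. Under $\Ex^\psi$, conditionally on $\pi_a(\ct)$ and on $\{H_{\max}(\ct) > a\}$, the point measure $\cn_a^\ct$ is Poisson with intensity $\ell^a(dx)\, \Ex^\psi[d\ct']$, and $\cz_{a+a'}(\ct) = \sum_i \cz_{a'}(\ct^i)$ summed over the atoms of $\cn_a^\ct$. Proposition \ref{Buisson} transfers this to $\Pr_x^\psi$: the tree $\ct$ is a Poisson superposition of excursion trees with intensity $x\, \Ex^\psi$ grafted at the root, so the sub-trees above level $a$ still form, conditionally on $\pi_a(\ct)$, a Poisson point measure with intensity $\ell^a(dx)\, \Ex^\psi[d\ct']$. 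Applying Campbell's formula,
\[
\Er_x^\psi[e^{-\lambda \cz_{a+a'}} \mid \pi_a(\ct)] = \exp\bigl(-\cz_a\, v(a', \lambda)\bigr), \qquad v(a',\lambda) := \Ex^\psi\bigl[1 - e^{-\lambda \cz_{a'}}\bigr].
\]
In particular this shows that $\cz$ is a Markov branching process, and taking $a = 0$ under $\Pr_x^\psi$ (where $\cz_0 = x$, which itself follows from Proposition \ref{Buisson} since the local time at the root collects the independent excursions of total intensity $x$) yields $\Er_x^\psi[e^{-\lambda \cz_{a'}}] = e^{-x v(a',\lambda)}$.

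The remaining task is the identification $v(a, \lambda) = u(a, \lambda)$. Iterating the conditional expectation gives the semigroup identity $v(a+a', \lambda) = v(a, v(a', \lambda))$, matching that of $u$. Both functions are smooth in $a$ with $v(0+, \lambda) = u(0, \lambda) = \lambda$, so it suffices to verify $\partial_a v(a, \lambda)|_{a=0+} = -\psi(\lambda)$. This derivative can be read off from the construction of $\ell^a$ via the approximation in Theorem \ref{Defla}: the small-$\varepsilon$ asymptotics $b(\varepsilon)^{-1} \Ex^\psi[H_{\max}(\ct) > \varepsilon] = 1$ combined with the Poissonian structure of the sub-trees near the root connects $\partial_a v$ at $a = 0$ to the small-time generator of the height process excursions, which by construction is governed by the L\'evy exponent $\psi$ (this is the content of Theorem 1.4.1 / Section 1.3 of \cite{Duquesne2002}, which encodes that $\psi$ is the Laplace exponent of the underlying L\'evy process).

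The main obstacle is precisely this last identification: establishing $\partial_a v(0+,\lambda) = -\psi(\lambda)$ rigorously requires going through the height process encoding and invoking the fluctuation-theoretic identities of Le Gall and Le Jan / Duquesne and Le Gall that relate local times of $H$ to the exponent $\psi$. Since this identification is already performed in \cite{Duquesne2002, Duquesne2005a} at the level of the height process, the cleanest route is to note that $\cz_a$ under $\N^\psi$ coincides (via the canonical projection $p^H$, as indicated after Theorem \ref{Defla}) with the total mass of $d_s L_s^a$, and then directly quote the CSBP identification of \cite{Duquesne2002}. Uniqueness of the CSBP law from its Laplace functional then concludes the proof.
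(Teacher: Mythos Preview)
The paper does not prove this theorem: it is stated as a result from \cite{Duquesne2002,Duquesne2005a} with no accompanying argument, followed only by a remark that Assumption 2 can be dropped at the height-process level. Your sketch is a reasonable outline of the argument in those references (branching property $\Rightarrow$ Markov branching structure $\Rightarrow$ identification of the Laplace exponent via the height-process construction), and you yourself conclude that the cleanest route is to quote the identification from \cite{Duquesne2002}; so your proposal and the paper end up in the same place, namely a citation.
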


\begin{rem}
This theorem can be stated in terms of the height process without Assumption 2.
\end{rem}

\subsection{Super-critical Lévy trees}
\label{sec:Extension}
Let us now briefly recall the construction from \cite{Abraham2009b} for
super-critical Lévy trees using a Girsanov transformation similar to the one
used for CSBPs, see Theorem \ref{GirCSBP}. 

Let $\psi$ be a super-critical branching mechanism satisfying Assumptions 1 and
2. Recall $\theta^*$ is the unique positive root of $\psi'$ and that the
branching
mechanism $\psi_\theta$ is sub-critical if $\theta>\theta^*$, critical if
$\theta=\theta^*$ and super-critical otherwise. We consider the filtration
$\ch=(\ch_a,\ a\ge 0)$, where $\ch_a$ is the $\sigma$-field generated by the
random variable $\pi_a(\ct)$ and the $\Pr_x^{\psi_{\theta^*}}$-negligible sets.
For $\theta \ge \theta^*$, we define the process
$M^{\psi,\theta}=(M_a^{\psi,\theta}, a\geq 0)$ with:
\[ 
M_a^{\psi,\theta} = \exp \Big( \theta x -\theta\cz_a - \psi(\theta)
\int_0^a \cz_s ds \Big)
\]
By absolute continuity of the measures $\Pr_x^{\psi_\theta}$ (resp.
$\Ex^{\psi_\theta}$) with respect to $\Pr_x^{\psi_{\theta^*}}$ (resp.
$\Ex^{\psi_{\theta^*}}$), all the processes $M^{\psi_{\theta},-\theta}$ for
$\theta>\theta^*$ are $\ch$-adapted. Moreover, all these processes are
$\ch$-martingales (see \cite{Abraham2009b} for the proof). Theorem \ref{Defla}
shows that $M^{\psi_{\theta^*},-\theta^*}$ is $\ch$-adapted. Let us now define
the $\psi$-Lévy tree, cut at level $a$ by the following Girsanov transformation.

\begin{definition}
\label{def:Girsanov}
Let $\psi$ be a super-critical branching mechanism satisfying Assumptions 1 and
2. Let $\theta \ge \theta^*$. For $a\ge 0$, we define the distribution
$\Pr_x^{\psi,a}$ (resp. $\Ex^{\psi,a}$) by: if $F$ is a non-negative, measurable
functional defined on $\EsAr$,
 \begin{align}
 \label{eq:GPx}
 \mathbb{E}_x^{\psi,a}[F(\ct)] 
&= \mathbb{E}_x^{\psi_{\theta}} \Big[
M_a^{\psi_\theta,-\theta} F(\pi_a(\ct)) \Big],\\
\label{GirTronc}
 \Ex^{\psi,a}[F(\ct)] 
&= \Ex^{\psi_{\theta}} \Big[
\exp \left( \theta\cz_a +\psi(\theta) \int_0^a \cz_s (ds)
F(\pi_a(\ct) \right)
\Big] .
 \end{align}
\end{definition}
It can be checked that the definition of $\Pr_x^{\psi,a}$ (and of
$\Ex^{\psi,a}$) does not depend on $\theta\ge \theta^*$. 
\par The probability measures $\Pr_x^{\psi,a}$ satisfy a consistence property,
allowing us to define the super-critical Lévy tree in the following way.

\begin{theorem} 
\label{thm:extension}
Let $\psi$ be a super-critical branching mechanism satisfying
assumptions 1 and 2. There exists a probability measure $\Pr_x^{\psi}$ (resp. a
$\sigma$-finite
measure $\Ex^\psi$) on $\EsAr$ such that for $a>0$, we have, if $F$ is a
measurable non-negative functional on $\EsAr$,
 \begin{equation*} \mathbb{E}_x^\psi [F(\pi_a(\ct))] = \mathbb{E}_x^{\psi,a}
[F(\ct)],
\end{equation*}
the same being true under $\Ex^\psi$. 
\end{theorem}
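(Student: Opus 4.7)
The plan is to construct $\Pr_x^\psi$ (and $\Ex^\psi$) as the projective limit of the consistent family $(\Pr_x^{\psi,a}, a\ge 0)$ (resp.\ $(\Ex^{\psi,a}, a\ge 0)$), viewing each w-tree $\ct\in\T$ as the limit in the GHP-topology of its truncations $\pi_a(\ct)$ as $a\to\infty$. The construction breaks into three steps: first, establishing the consistency of the family via the martingale property of $M^{\psi_\theta,-\theta}$; second, building the limit measure; third, handling the excursion measure where additional care is required because $\Ex^{\psi,a}$ is $\sigma$-finite.

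For the consistency step, fix $0\le a\le a'$ and $\theta\ge\theta^*$, and let $F$ be a non-negative measurable functional on $\T$. Note that $F(\pi_a(\ct))$ is $\ch_a$-measurable, and $M^{\psi_\theta,-\theta}$ is an $\ch$-martingale under $\Pr_x^{\psi_\theta}$. From \eqref{eq:GPx} we compute:
\[
\Esp_x^{\psi,a'}[F(\pi_a(\ct))]
= \Esp_x^{\psi_\theta}\bigl[M_{a'}^{\psi_\theta,-\theta}\, F(\pi_a(\ct))\bigr]
= \Esp_x^{\psi_\theta}\bigl[M_a^{\psi_\theta,-\theta}\, F(\pi_a(\ct))\bigr]
= \Esp_x^{\psi,a}[F(\ct)],
\]
where the last equality uses that under $\Pr_x^{\psi,a}$, $\ct$ and $\pi_a(\ct)$ have the same distribution (both are supported on trees of height $\le a$). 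This says precisely that $(\pi_a)_* \Pr_x^{\psi,a'} = \Pr_x^{\psi,a}$, so the family is consistent with respect to the truncation operators. A side check is that the definition is independent of $\theta\ge\theta^*$, which again follows from the martingale property applied between two values $\theta_1,\theta_2\ge\theta^*$, using Theorem \ref{GirCSBP}.

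For the existence of the limit measure, I would exploit that $\T$ embeds into the projective limit space $\prod_{a\ge 0}\T_{\le a}$, where $\T_{\le a}=\{\ct\in\T; H_{max}(\ct)\le a\}$, via the map $\ct\mapsto(\pi_a(\ct))_{a\ge 0}$. Since $(\T,d_{\text{GHP}})$ is Polish (Corollary \ref{cor:T}) and the truncation maps $\pi_a$ are continuous on $\T$ (as follows from Lemma \ref{lem:reg-d-GHP} and the definition of $d_{\text{GHP}}$), a countable consistent subfamily $(\Pr_x^{\psi,n}, n\in\N)$ admits a projective limit by the Kolmogorov extension theorem on this Polish product space. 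The image of this limit under $\ct\mapsto \lim_n \pi_n(\ct)$ defines the candidate probability measure $\Pr_x^\psi$ on $\T$; one needs to check that the limit in $\T$ exists $\Pr_x^\psi$-a.s., which reduces to verifying that $H_{max}(\pi_n(\ct))$ stabilizes (equivalently, that the limiting tree is locally compact a.s.), and this follows from the fact that each $\Pr_x^{\psi,n}$ is supported in $\T$.

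For the excursion measure $\Ex^\psi$, the projective construction must be done $\sigma$-finitely. The natural localization is by height: for each $\epsilon>0$, the restriction of $\Ex^{\psi,a}$ to $\{H_{max}(\ct)>\epsilon\}$ is a finite measure, as can be checked using \eqref{GirTronc} together with $\Ex^{\psi_\theta}[H_{max}(\ct)>\epsilon] = b_\theta(\epsilon)<\infty$ under Assumption 2. One then applies the consistency argument above to the finite measures $\Ex^{\psi,a}[\cdot\,;\,H_{max}>\epsilon]$ for $a\ge\epsilon$, produces a projective limit $\Ex^\psi[\cdot\,;\,H_{max}>\epsilon]$, and then lets $\epsilon\downarrow 0$ to obtain the $\sigma$-finite measure $\Ex^\psi$. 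The main technical obstacle will be this last step: verifying that the height-truncated limits are consistent as $\epsilon$ decreases so that they glue into a single $\sigma$-finite measure on $\T$, and checking measurability issues (particularly, that the map $\ct\mapsto H_{max}(\ct)$ is measurable on the projective-limit space and that $\{H_{max}>\epsilon\}$ is a stable set under the truncations $\pi_a$ for $a\ge\epsilon$).
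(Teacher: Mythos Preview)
Your overall strategy matches the paper's: establish consistency of the family $(\Pr_x^{\psi,a})_{a\ge 0}$ via the martingale property of $M^{\psi_\theta,-\theta}$, invoke a Kolmogorov-type extension on a Polish space, and then build the limiting tree from the projective system. The paper carries this out on $\T^{\R_+}$ via Daniell--Kolmogorov, extracts an a.s.\ increasing version over the rationals, and defines $\ct=\bigcup_{a>0}\ct^a$ directly as a nested union, checking by hand that this union is complete, locally compact, and carries a locally finite measure.

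There is one genuine error in your argument. You write that existence of the limit in $\T$ ``reduces to verifying that $H_{max}(\pi_n(\ct))$ stabilizes (equivalently, that the limiting tree is locally compact a.s.)''. This is wrong on two counts: for a super-critical branching mechanism the tree has infinite height with positive probability, so $H_{max}(\pi_n(\ct))=n$ does \emph{not} stabilize; and stabilization of the height is not equivalent to local compactness. The correct argument is that the consistency $\pi_m(\ct^{(n)})=\ct^{(m)}$ for $m\le n$ forces $(\ct^{(n)})^{(r)}=(\ct^{(m)})^{(r)}$ whenever $m,n>r$, hence $d_{\text{GHP}}(\ct^{(n)},\ct^{(m)})\le\int_N^\infty \expp{-r}\,dr\to 0$, so the sequence is Cauchy and converges in $(\T,d_{\text{GHP}})$ by Corollary~\ref{cor:T}. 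Alternatively (and this is what the paper does), realize the $\ct^{(n)}$ as a nested increasing family inside a common set, take the union with the obvious metric and measure, and verify directly that it is a w-tree: each closed ball of radius $a$ equals the compact tree $\ct^{(\lceil a\rceil)}$ restricted to that ball, so local compactness and local finiteness of the measure are immediate.

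Your treatment of the $\sigma$-finite case for $\Ex^\psi$ is actually more detailed than the paper's, which simply says ``similar arguments hold under $\Ex^\psi$''; your localization by $\{H_{max}>\epsilon\}$ is the natural way to make that precise.
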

The w-tree $\ct$ under $\P_x^\psi$ or $\N^\psi$ is called a $\psi$-Lévy w-tree
or simply a Lévy tree. 
\begin{proof} For $n\ge 1,\ 0<a_1<...<a_n$, we define a probability measure on
$\EsAr^n$ by:
 \begin{equation*} \Pr_x^{\psi,a_1,...,a_n} (\ct_1\in A_1,...,\ct_n \in A_n) \\
= \Pr_x^{\psi,a_n} ( \ct\in A_n, \pi_{a_{n-1}}(\ct) \in A_{n-1},...,
\pi_{a_1}(\ct) \in A_1 ) \end{equation*}
if $A_1,...,A_n$ are Borel subsets of $\EsAr$. The probability measures
$\Pr_x^{\psi,a_1,...,a_n}$ for $n\ge 1,\ 0<a_1<...<a_n$ then form a projective
family. This is a consequence of the martingale property of
$M^{\psi_\theta,-\theta}$ and the fact that the projectors $\pi_a$ satisfy the
obvious compatibility relation $\pi_b \circ \pi_a = \pi_b$ if $0<b<a$. 
\par By the Daniell-Kolmogorov theorem, there exists a probability measure
$\tilde{\Pr}_x^\psi$ on the product space $\EsAr^{\R_+}$ such that the
finite-dimensional distributions of a $\tilde{\Pr}_x^\psi$-distributed family
are described by the measures defined above. It is easy to construct a version
of a $\tilde{\Pr}_x^\psi$-distributed process that is a.s. increasing. Indeed,
almost all sample paths of a $\tilde{\Pr}_x^\psi$-distributed process are
increasing when restricted to rational numbers. We can then define a w-tree
$\ct^a$ for any $a>0$ by considering a decreasing sequence of rational numbers
$a_n \downarrow a$ and defining $\ct^a = \cap_{n\ge 1} \ct^{a_n}$. Notice that
$\ct^a$ is closed for all $a\in \R_+$. It is easy to check that the
finite-dimensional distributions of this new process are unchanged by this
procedure. Let us then consider $\ct = \cup_{a>0} \ct^a$, endowed with the
obvious metric $d^\ct$ and mass measure $\mathbf{m}$. It is clear that $\ct$ is
a real tree, rooted at the common root of the $\ct^a$. All the $\ct^a$ are
compact, so that $\ct$ is locally compact and complete. The measure $\mathbf{m}$
is locally finite
since all the $\mathbf{m}^{\ct^a}$ are finite measures. Therefore, $\ct$ is a.s.
a w-tree. Then, if we define $\Pr_x^\psi$ to be the distribution of $\ct$, the
conclusion follows. Similar arguments hold under $\N^\psi$.
\end{proof}

\begin{remark} Another definition of super-critical Lévy trees was given by
Duquesne and Winkel \cite{Duquesne2007},\cite{Duquesne2010a}: they consider
increasing families of Galton-Watson trees with exponential edge lengths which
satisfy a certain hereditary property (such as uniform Bernoulli coloring of the
leaves). Lévy trees are then defined to be the Gromov-Hausdorff limits of these
processes. Another approach via backbone decompositions is given in
\cite{Berestycki2009a}. \end{remark}

All the definitions we made for sub-critical Lévy trees then carry over to the
super-critical case. In particular, the level set measure $\ell^a$, which is
$\pi_a(\ct)$-measurable, can be defined using the Girsanov formula. Thanks to
Theorem \ref{GirCSBP}, it is easy to show that the mass process $(\cz_a=\langle
\ell^a ,1 \rangle,\ a\ge 0)$ is under $\Pr_x^\psi$ a CSBP with branching
mechanism $\psi$. In particular, with $u$ defined in \reff{eq:def-u} and $b$ by
\reff{bh}, we have: 
\begin{equation}
 \label{eq:NZ}
\N^\psi\left[1-\expp{-\lambda \cz_a }\right]=u(a,\lambda)
\quad\text{and}\quad 
\N^\psi\left[H_{max}(\ct)>a \right]=\N^\psi
\left[\cz_a >0 \right]= b(a).
\end{equation}
Notice that $b$ is finite only under Assumption 2. 

We set:
\begin{equation}
 \label{eq:s=int-za}
\sigma=\int_0^{+\infty } \cz_a\; da =\mathbf{m}^\ct(\ct)
\end{equation}
for the total mass of the Lévy tree $\ct$. Notice this is consistent with
\reff{eq:int-la} and \reff{eq:s=la} which are defined for (sub)critical Lévy
trees. Thanks to \reff{eq:s=int-za}, notice that $ \sigma$ is distributed as the
total population size of a CSBP with branching mechanism $\psi$. In particular,
its Laplace transform is given for $\lambda> 0$ by:
\begin{equation} 
\label{eq:Ns}
\Ex^\psi [1-\expp{-\lambda \sigma}] =
\psi^{-1}(\lambda).
\end{equation}
Notice that
$\N^\psi[\sigma=+\infty ]=\psi^{-1}(0)> 0$. 

We recall the following
Theorem, from \cite{Abraham2009b}, which sums up the situation for any
branching mechanisme $\psi$. 
\begin{theorem}[\cite{Abraham2009b}] Let $\psi$ be any branching
 mechanism satisfying Assumptions 1 and 2, and let $q>0$ such that
 $\psi(q)\ge 0$. Then, the probability measure $\Pr_x^{\psi_q}$ on
 $\T$ is absolutely continuous w.r.t. $\Pr_x^\psi$, with
\begin{equation} 
\frac{d\Pr_x^{\psi_q}}{d\Pr_x^\psi} =M_\infty^{\psi,q} =
\expp{qx-\psi(q)\sigma} \ind_{\{\sigma<+\infty \}}. 
\end{equation}
Similarly, the excursion measure $\Ex^{\psi_q}$ on $\T$ is absolutely
continuous w.r.t. $\Ex^\psi$ and we have
\begin{equation}
 \label{Girsanov} \frac{d\Ex^{\psi_q}}{d\Ex^\psi} =
 \expp{-\psi(q)\sigma} \ind_{\{\sigma<+\infty \}}.
\end{equation}
\end{theorem}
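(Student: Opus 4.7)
The plan is to pass to the limit $a\to\infty$ in a tree-level Girsanov identity for the truncated L\'evy tree $\pi_a(\ct)$. Combining Definition \ref{def:Girsanov} with Theorem \ref{thm:extension} (in the super-critical case, in which $q>0$ and $\psi(q)\ge 0$ force $q\ge\theta^*$, so one can take $\theta=q$), and combining Theorem \ref{GirCSBP} with Proposition \ref{Buisson} (in the (sub)critical case), I would first establish that for every $a\ge 0$ and every bounded measurable $F$ on $\T$,
\[
\E_x^{\psi_q}\!\left[F(\pi_a(\ct))\right]=\E_x^{\psi}\!\left[M_a^{\psi,q}\,F(\pi_a(\ct))\right],
\]
i.e.\ on the sub-$\sigma$-algebra $\sigma(\pi_a(\ct))$ the Radon--Nikodym derivative of $\Pr_x^{\psi_q}$ with respect to $\Pr_x^\psi$ is $M_a^{\psi,q}=\expp{qx-q\cz_a-\psi(q)\int_0^a\cz_s\,ds}$. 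By the consistency of these identities as $a$ varies, $(M_a^{\psi,q})_{a\ge 0}$ is a $(\ch_a)$-martingale under $\Pr_x^\psi$.

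Then I would pass to the limit $a\to\infty$. Since $M_a^{\psi,q}\le\expp{qx}$, the martingale is uniformly bounded, hence uniformly integrable, and Doob's theorem gives a.s.\ and $L^1(\Pr_x^\psi)$ convergence $M_a^{\psi,q}\to M_\infty^{\psi,q}$. The a.s.\ limit is identified pathwise: on the extinction event $\{\sigma<\infty\}$, the CSBP $\cz$ reaches $0$ in finite time (Theorem \ref{CSBPLevy} together with Assumption 2), so $\cz_a\to 0$ and $\int_0^a\cz_s\,ds\to\sigma$, yielding the limit $\expp{qx-\psi(q)\sigma}$; on the complementary event $\{\sigma=\infty\}$, either $\psi(q)>0$ and $M_a^{\psi,q}\to 0$ via the integral term, or $\psi(q)=0$ with $q>0$, in which case the decay is driven by $q\cz_a$, invoking the classical fact that on non-extinction a super-critical CSBP tends to $+\infty$ a.s. In either case $M_\infty^{\psi,q}=\expp{qx-\psi(q)\sigma}\ind_{\{\sigma<\infty\}}$.

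Finally, the elementary bound $d_{\text{GHP}}(\pi_a(\ct),\ct)\le\expp{-a}$ (coming directly from the definition of $d_{\text{GHP}}$ in Section \ref{sec:GPdist}) shows that $\ct$ is measurable with respect to $\bigvee_{a\ge 0}\sigma(\pi_a(\ct))$. Uniform integrability of $(M_a^{\psi,q})$ then promotes the a.s.\ limit to a genuine Radon--Nikodym derivative on $\sigma(\ct)$, giving $d\Pr_x^{\psi_q}/d\Pr_x^\psi=M_\infty^{\psi,q}$. The excursion identity \reff{Girsanov} is obtained by the same scheme under $\Ex^\psi$ with $x=0$, starting from the corresponding truncated Girsanov identity on $\sigma(\pi_a(\ct))$.

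The main obstacle is the identification of the a.s.\ limit of $M_a^{\psi,q}$ on the non-extinction event when $\psi$ is super-critical, especially the borderline case $\psi(q)=0$ where the decay of the density is not controlled by the integral term but only by $q\cz_a\to+\infty$. A secondary technical point is that the Girsanov identity must hold on $\sigma(\pi_a(\ct))$ for \emph{all} bounded measurable functionals on $\T$, not just those factoring through the mass process $(\cz_s)_{s\le a}$: this is the content of Definition \ref{def:Girsanov} in the super-critical case, and is obtained in the (sub)critical case from the height process Girsanov formula of Theorem \ref{GirCSBP} combined with the branching structure of L\'evy trees.
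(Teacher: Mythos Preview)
The paper does not prove this theorem: it is stated with attribution to \cite{Abraham2009b} and no proof is supplied in the present text, so there is nothing to compare your argument against here. Your outline is nonetheless a reasonable reconstruction of how such a result is established.

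A few comments on the argument itself. Your uniform bound $M_a^{\psi,q}\le\expp{qx}$ is correct (since $q>0$, $\cz_a\ge 0$, and $\psi(q)\ge 0$), and this is indeed what makes the passage to the limit painless under $\Pr_x^\psi$. The identification of the limit on $\{\sigma<\infty\}$ is fine. On $\{\sigma=\infty\}$ in the borderline case $\psi(q)=0$, note that $q>0$ and $\psi(q)=0$ force $\psi$ to be super-critical and $q$ to equal the largest root of $\psi$; then on non-extinction the CSBP diverges, so $q\cz_a\to\infty$ as you say. The bound $d_{\text{GHP}}(\pi_a(\ct),\ct)\le\expp{-a}$ is correct and does give $\sigma(\ct)\subset\bigvee_a\ch_a$.

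The one place where your sketch is thin is the excursion-measure statement \reff{Girsanov}. Under $\N^\psi$ you are no longer working with a probability measure, so Doob's $L^1$ martingale convergence does not apply directly; you should instead argue via the truncated identity \reff{GirTronc} and pass to the limit using dominated convergence on bounded test functionals (the density $\expp{-\psi(q)\sigma}\ind_{\{\sigma<\infty\}}\le 1$ makes this routine), or alternatively disintegrate $\Pr_x^\psi$ over excursions via Proposition \ref{Buisson} and read off the excursion-level density from the probability-level one. Either route works, but ``the same scheme with $x=0$'' understates what is needed.
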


When applying Girsanov formula (\ref{Girsanov}) to $q=\bar\theta$
defined by \reff{eq:def-barq}, we get the following remarkable
Corollary, due to the fact that $\psi_{\theta}(\bar\theta-\theta) = 0$.

\begin{cor}
 Let $\psi$ be a critical branching mechanism satisfying Assumptions 1 and 2,
and $\theta\in \Theta^\psi$ with $\theta<0$. Let $F$ be a non-negative
measurable functional defined on $\EsAr$. We have:
\begin{align}
\nonumber
\expp{(\bar{\theta}-\theta)x} \; \Er_x^{\psi_\theta}[F(\ct)\ind_{\{\sigma
 <+\infty\}} 
]& = \Er_x^{\psi_{\bar\theta}}[F(\ct)], \\
\label{GirThetaBar} 
 \Ex^{\psi_\theta}[F(\ct)\ind_{\{\sigma <+\infty\}}] & = 
\Ex^{\psi_{\bar\theta}}[F(\ct)].
\end{align}
\end{cor}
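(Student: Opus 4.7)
The plan is to deduce this Corollary directly from the Girsanov formula stated just above it, by shifting twice rather than once. More precisely, the strategy is to apply the previous theorem with $\psi$ replaced by the super-critical branching mechanism $\psi_\theta$ and with the shift parameter $q = \bar\theta - \theta$, then to recognize the resulting twice-shifted branching mechanism as $\psi_{\bar\theta}$.

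First I would check that the hypotheses of the previous theorem are satisfied for the pair $(\psi_\theta, \bar\theta - \theta)$. Since $\psi$ is critical, $\theta^\ast = 0$, and $\theta < 0$ forces $\psi_\theta$ to be super-critical. By definition $\bar\theta\in \Theta^\psi$ satisfies $\psi(\bar\theta) = \psi(\theta)$, so by convexity and criticality $\bar\theta > 0 > \theta$, hence $q := \bar\theta - \theta > 0$. Moreover
\[
\psi_\theta(q) = \psi_\theta(\bar\theta - \theta) = \psi(\bar\theta) - \psi(\theta) = 0 \geq 0,
\]
so the sign condition $\psi_\theta(q) \geq 0$ from the previous theorem holds. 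Assumptions 1 and 2 pass from $\psi$ to $\psi_\theta$ in a routine way (conservativity follows from $\theta \in \Theta^\psi$, and the Grey condition $\int^{+\infty} du/\psi_\theta(u) < \infty$ is equivalent to the Grey condition on $\psi$ by a change of variables for large $u$).

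Second, I would apply the previous theorem to obtain
\[
\frac{d\Pr_x^{(\psi_\theta)_q}}{d\Pr_x^{\psi_\theta}} = \expp{qx - \psi_\theta(q)\sigma}\, \ind_{\{\sigma<+\infty\}}
\quad\text{and}\quad
\frac{d\Ex^{(\psi_\theta)_q}}{d\Ex^{\psi_\theta}} = \expp{-\psi_\theta(q)\sigma}\, \ind_{\{\sigma<+\infty\}}.
\]
The key algebraic identity is that iterated shifts compose: for every $\lambda$,
\[
(\psi_\theta)_q(\lambda) = \psi_\theta(\lambda+q) - \psi_\theta(q) = \psi(\lambda + \bar\theta) - \psi(\theta) - (\psi(\bar\theta)-\psi(\theta)) = \psi_{\bar\theta}(\lambda),
\]
so $(\psi_\theta)_q = \psi_{\bar\theta}$. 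Combined with $\psi_\theta(q) = 0$, the two Radon-Nikodym derivatives above simplify respectively to $\expp{(\bar\theta - \theta)x}\ind_{\{\sigma<+\infty\}}$ and $\ind_{\{\sigma<+\infty\}}$, which is exactly the content of the two displayed equalities of the Corollary.

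There is essentially no real obstacle here; the only point to handle with some care is the verification that Assumptions 1 and 2 transfer to $\psi_\theta$ (which justifies invoking the previous theorem in the super-critical regime), and the bookkeeping check that the twice-shifted mechanism $(\psi_\theta)_q$ coincides with $\psi_{\bar\theta}$ when $q = \bar\theta - \theta$. Once these are in place, the Corollary is immediate.
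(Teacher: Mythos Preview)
Your proposal is correct and follows exactly the approach indicated in the paper: apply the preceding Girsanov theorem with base branching mechanism $\psi_\theta$ and shift $q=\bar\theta-\theta>0$, then use the identities $(\psi_\theta)_q=\psi_{\bar\theta}$ and $\psi_\theta(\bar\theta-\theta)=0$ to simplify the Radon--Nikodym derivatives. The paper states this in one line just before the Corollary; your write-up simply makes the verification of hypotheses explicit.
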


We deduce from Proposition \ref{Buisson} and Theorem \ref{thm:extension} that
$\cn_0^{\ct}(dx,d\ct') $ defined by \reff{eq:branchement} with $a=0$ is under
$\P^\psi_x(d\ct)$ a Poisson point measure on $\{\emptyset\}\times \T$ with
intensity $\sigma \delta_\emptyset(dx) \N^\psi[d\ct']$. Then we deduce from
\reff{eq:GPx}, with $F=1$, that for $\theta\geq \theta^*$:
\begin{equation}
 \label{eq:N-q}
\Ex^{\psi_{\theta}} \left[1-\exp\left(\theta\cz_a +\psi(\theta) \int_0^a \cz_s
 ds\right)\right] = -\theta. 
\end{equation}

\subsection{Pruning Lévy trees}
\label{Pruning}

We recall the construction from \cite{Abraham2010a} on the pruning of Lévy
trees. Let $\ct$ be a random Lévy w-tree under $\Pr_x^\psi$ (or under
$\Ex^\psi$), with $\psi$ conservative. Let
\[
m^{(\text{ske})}(dx,d\theta) = \sum_{i\in I^{\text{ske}}}
\delta_{(x_i,\theta_i)}(dx,d\theta) 
\]
 be, conditionally on $\ct$, a Poisson point measure on $\ct\times\R_+$ with
intensity $2\beta l^{\ct}(dx) d\theta$. Since there is a.s. a countable number
of branching points (which have $l^{\ct}$-measure 0), the atoms of this measure
are distributed on $\ct \setminus(\mathrm{Br}(\ct) \cup \mathrm{Lf}(\ct))$. \par
If $\Pi=0$, we have $\mathrm{Br}_\infty(\ct)=\emptyset$ a.s. whereas if
$\Pi(\R_+)=\infty$, $\mathrm{Br}_\infty(\ct)$ is a.s. a countable dense subset
of $\ct$. If the latter condition holds, we consider, conditionally on $\ct$, a
Poisson point measure 
\[
m^{(\text{nod})}(dx,d\theta) = \sum_{i\in I^{\text{nod}}}
\delta_{(x_i,\theta_i)}(dx,d\theta) 
\]
 on $\ct\times\R_+$ with intensity $$\sum_{y\in
\mathrm{Br}_\infty(\ct)}\Delta_y\delta_y(dx)\,d\theta$$ where $\Delta_x$ is the
mass of the node $x$, defined by \reff{DefMas}. Hence, if $\theta>0$, a node
$x\in \mathrm{Br}_\infty(\ct)$ is an atom of $m^{(\text{nod})}(dx,[0,\theta])$
with probability $1-\exp(-\theta\Delta_x)$. The set
\[
\{x_i,\ i\in
I^{\text{nod}}\}=\left\{x\in\ct,\ m^{(\text{nod})}
 \bigl(\{x\}\times\R_+\bigr)>0\right\}
\]
of marked branching points corresponds $\P_x^\psi$-a.s or $\N^\psi$-a.e. to
$\mathrm{Br}_\infty(\ct)$. For $i\in I^{\text{nod}}$, we set
\[
\theta_i=\inf\left\{\theta>0,\ m^{(\text{nod})}\bigl(\{x_i\}\times
[0,\theta]\bigr)>0\right\}
\]
the first mark on $x_i$ (which is conditionally on $\ct$ exponentially
distributed with parameter $\theta_{x_i}$), and we set
\[
\{\theta_j,\ j\in J_i^{\text{nod}}\}=\left\{\theta>\theta_i, \
 m^{(\text{nod})}\bigl(\{x_i\}\times\{\theta\}\bigr)>0\right\}
\]
so that we can write
\[
 m^{(\text{nod})}(dx,d\theta) = \sum_{i\in I^{\text{nod}}}
 \delta_{x_i}(dx) \left( 
\delta_{\theta_i}(d\theta) + \sum_{j\in J^{\text{nod}}_i}
\delta_{\theta_j}(d\theta)\right) .
\]

We set the measure of marks:
\begin{equation}
 \label{eq:def-M}
\cm(dx,d\theta)=m^{(\text{ske})}(dx,d\theta) + m^{(\text{nod})}(dx,d\theta), 
\end{equation}
and consider the family of w-trees $\Lambda(\ct,\cm)=(\Lambda_\theta(\ct,\cm)
,\theta \ge 0)$, where the $\theta$-pruned w-tree $\Lambda_\theta$ is defined
by:
\begin{equation*} 
 \Lambda_\theta(\ct,\cm) = \{ x\in \ct,\ \cm (\llbracket
\emptyset , x \llbracket \times [0,\theta]) = 0 \},
\end{equation*}
rooted at $\emptyset^{\Lambda_\theta(\ct,\cm) } = \emptyset^\ct$, and the metric
$d^{\Lambda_\theta(\ct,\cm)} $ and the mass measure
$\bm^{\Lambda_\theta(\ct,\cm)} $ are the restrictions of $d^\ct$ and $\bm^\ct$
to $\Lambda_\theta(\ct,\cm) $. In particular, we have $\Lambda_0( \ct,\cm)=\ct$.
The family of w-trees $\Lambda(\ct,\cm)$ is a non-increasing family of real
trees, in a sense that $ \Lambda_{\theta'}(\ct,\cm)\supset
\Lambda_\theta(\ct,\cm) $ for $0\leq \theta'\leq \theta$, see Figure
\ref{fig:arbres}. In particular, we have that the pruning operators satisfy a
cocycle property, for $\theta_1\geq 0$ and $\theta_2\geq 0$:
\[
\Lambda_{\theta_2}\bigl(\Lambda_{\theta_1}(\ct,\cm),\cm_{\theta_1}\bigr)
= \Lambda_{\theta_2+\theta_1}(\ct,\cm),
\]
where $\cm_{\theta}(A\times [0,q])=\cm(A\times [\theta, \theta+q])$. Abusing
notation, we write $\N^\psi(d\ct,d\cm)$ for the distribution of the pair
$(\ct,\cm)$ when $\ct$ is distributed according to $\N^\psi(d\ct)$ and
conditionally on $\ct$, $\cm$ is distributed as described above.
\par The following result can be deduced from \cite{Abraham2009b}.
\begin{theorem}
\label{ProcArbre*} 
Let $\psi$ be a branching mechanism satisfying Assumptions 1 and 2. There exists
a non-increasing $\T$-valued Markov process $(\ct_\theta, \theta\in
\Theta^\psi)$ such that for all $q\in \Theta^\psi$, the process
$(\ct_{\theta+q}, \theta\geq 0)$ is distributed as $\Lambda(\ct,\cm)$ under
$\Ex^{\psi_q}[d\ct,d\cm]$.
\end{theorem}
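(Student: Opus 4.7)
The plan is to apply Daniell--Kolmogorov extension on a countable dense subset of $\Theta^\psi$, using the cocycle property of $\Lambda$ together with the special Markov property of pruning from \cite{Abraham2010a} to supply consistency, and then to extend by monotonicity as in the proof of Theorem \ref{thm:extension}. The case of super-critical $\psi_q$ is absorbed via the Girsanov extension of Definition \ref{def:Girsanov}.

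Step 1 (finite-dimensional distributions). For $q\in\Theta^\psi$ and $0\le s_1\le\cdots\le s_n$, sample $(\ct,\cm)$ under $\Ex^{\psi_q}$ and define a candidate joint law for $(\ct_{q+s_1},\ldots,\ct_{q+s_n})$ by
\[
 \nu^{q}_{s_1,\ldots,s_n} := \text{law of }\bigl(\Lambda_{s_1}(\ct,\cm),\ldots,\Lambda_{s_n}(\ct,\cm)\bigr)\text{ under }\Ex^{\psi_q}.
\]

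Step 2 (consistency in $q$). The cocycle identity $\Lambda_s(\Lambda_t(\ct,\cm),\cm_t)=\Lambda_{s+t}(\ct,\cm)$, combined with the special Markov property of \cite{Abraham2010a}---which says that, under $\Ex^{\psi_q}$, the pair $(\Lambda_t(\ct,\cm),\cm_t)$ has the same distribution as $(\ct,\cm)$ under $\Ex^{\psi_{q+t}}$---yields, for every $q\le q'$ in $\Theta^\psi$ and every $0\le s_1\le\cdots\le s_n$, the relation
\[
 \nu^{q'}_{s_1,\ldots,s_n}=\nu^{q}_{s_1+(q'-q),\ldots,s_n+(q'-q)}.
\]
This is precisely the compatibility needed to identify all the laws $\nu^q$ as marginals of a single projective system indexed by $\Theta^\psi$, and it simultaneously delivers the Markov property.

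Step 3 (extension and regularisation). Daniell--Kolmogorov furnishes a process $(\ct_r)_{r\in\Theta^\psi\cap\Q}$ with the prescribed marginals. By monotonicity of $\Lambda$ in the pruning parameter, $\ct_{r'}\subset\ct_r$ almost surely whenever $r\le r'$ are rationals; one then sets $\ct_\theta=\bigcap_{r\in\Q,\,r>\theta}\ct_r$, with the local-compactness, closedness, and local finiteness of $\bm^{\ct_\theta}$ checked exactly as in the proof of Theorem \ref{thm:extension}. Since the finite-dimensional marginals obtained this way agree with $\nu^q_\cdot$ for any $q$, the required distributional identity holds.

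The crucial obstacle is the special Markov property across distinct mechanisms, especially for $q$ such that $\psi_q$ is super-critical: there $\Ex^{\psi_q}$ is only $\sigma$-finite and is itself defined indirectly on level truncations $\pi_a(\ct)$ via \reff{GirTronc}. One must check that pruning commutes with level truncation in distribution, so that the (sub)critical special Markov property of \cite{Abraham2010a} can be transported through the Girsanov density $M^{\psi_q,-q}$ and then passed to the limit $a\to\infty$; this is the content of \cite{Abraham2009b} and is the only nontrivial analytic input beyond the two results already invoked.
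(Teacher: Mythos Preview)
The paper does not give its own proof of this theorem: it is stated as a result that ``can be deduced from \cite{Abraham2009b}'' and left at that. Your proposal is therefore a reconstruction of that deduction, and the overall architecture---Kolmogorov extension over a dense index set, consistency via the cocycle identity plus the special Markov property, and a monotone regularisation mimicking Theorem~\ref{thm:extension}---is the natural one and is essentially what \cite{Abraham2009b} does.

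Two points deserve more care than you give them. First, Daniell--Kolmogorov is stated for probability measures, while $\Ex^{\psi_q}$ is only $\sigma$-finite; you should either work under $\Pr_x^{\psi_q}$ and then disintegrate, or condition on an event of finite positive mass (e.g.\ $\{H_{\max}(\ct)>\varepsilon\}$) and pass to the limit. Second, your Step~2 asserts that under $\Ex^{\psi_q}$ the pair $(\Lambda_t(\ct,\cm),\cm_t)$ is distributed as $(\ct,\cm)$ under $\Ex^{\psi_{q+t}}$. The special Markov property of \cite{Abraham2010a} gives the marginal of $\Lambda_t$ and the conditional law of the \emph{removed} subtrees; to recover the joint law with $\cm_t$ you also need that, conditionally on $\Lambda_t$, the shifted marks $\cm_t$ restricted to $\Lambda_t$ form a fresh mark process with the correct intensity. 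For the skeleton marks this is immediate from Poisson restriction, but for the node marks it requires that the masses $\Delta_x$ of infinite branching points of $\Lambda_t$ coincide with those computed intrinsically in the pruned tree---a nontrivial fact established in \cite{Abraham2010a}. You gesture at this in your final paragraph, but it is worth naming explicitly rather than folding it into the Girsanov step.
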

In particular, this Theorem implies that $\ct_\theta$ is distributed as
$\Ex^{\psi_\theta}$ for $\theta\in \Theta^\psi$ and that for $\theta_0 \ge 0$,
under $\Ex^\psi$, the process of pruned trees $(\Lambda_{\theta_0+\theta}(\ct),
\theta \ge 0)$ has the same distribution as $(\Lambda_\theta(\ct),\theta\ge 0)$
under $\Ex^{\psi_{\theta_0}}[d\ct]$. 
\par We want to study the time-reversed process $(\ct_{-\theta}, \theta\in
-\Theta^\psi)$, which can be seen as a growth process. This process grows by
attaching sub-trees at a random point, rather than slowly growing uniformly
along the branches. We recall some results from \cite{Abraham2009b} on the
growth process. From now on, we will assume in this Section that the
\textbf{branching mechanism $\psi$ is critical}, so that $\psi_\theta$ is
sub-critical iff $\theta>0$ and super-critical iff $\theta<0$.

\begin{figure}[htbp]
 \begin{center}
 \includegraphics{./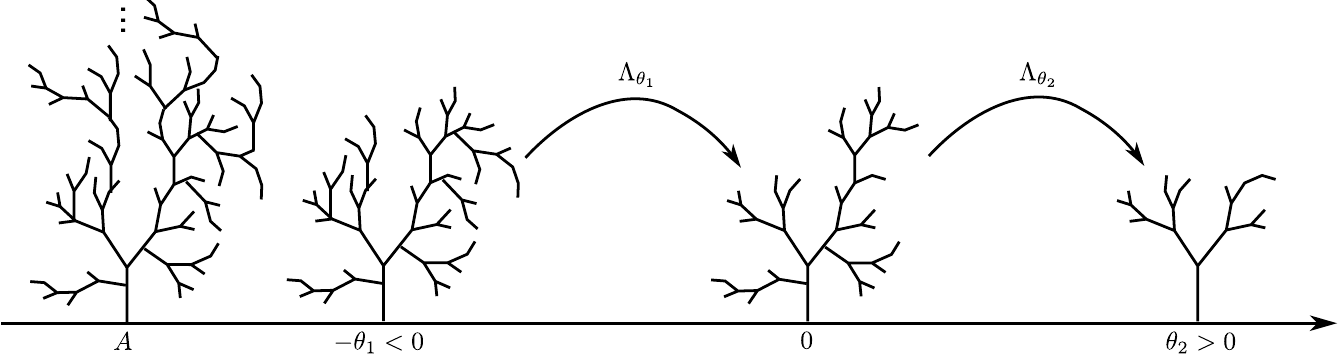}
\caption{The pruning process, starting from explosion time $A$
 defined in \reff{eq:def-A}.
% The pruning operators satisfy a cocycle property: $\Lambda_{\theta_2-q}
% \circ\Lambda_{q-\theta_1} = \Lambda_{\theta_2-\theta_1}$.
}
\label{fig:arbres}
 \end{center}
\end{figure}

We will use the following notation for the total mass of the tree $\ct_\theta$
at time $\theta\in \Theta^\psi$:
\begin{equation}
 \label{eq:def-sq}
\sigma_\theta=\bm^{\ct_\theta} (\ct_\theta). 
\end{equation}
The total mass process $(\sigma_\theta, \theta\in \Theta^\psi)$ is a pure-jump
process taking values in $(0,+\infty ]$. 

\begin{lemma}[\cite{Abraham2009b}] \label{lem:CroiSig} Let $\psi$ be a
 critical branching mechanism satisfying Assumptions 1 and 2. If $0 \le
 \theta_2 < \theta_1$, then we have:
\[
 \Ex^\psi [\sigma_{\theta_2} | \ct_{\theta_1} ]
= \sigma_{\theta_1} \frac{\psi'(\theta_1)}{\psi'(\theta_2)}\cdot
\]
\end{lemma}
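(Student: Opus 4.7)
The proof strategy is to view the process $(\ct_\theta, \theta \in \Theta^\psi)$ in reverse time as a growing tree process: as $\theta$ decreases from $\theta_1$ to $\theta_2$, the tree $\ct_\theta$ gains mass by ``ungrafting'' sub-trees that were pruned away. The plan is to identify the Poisson structure of these grafting events via the special Markov property of pruning from \cite{Abraham2010a}, compute the infinitesimal rate of mass gain, and solve the resulting integral equation for $g(\theta) := \Ex^\psi[\sigma_\theta \mid \ct_{\theta_1}]$, $\theta \in [\theta_2, \theta_1]$.

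Specifically, I would first use the special Markov property to establish that the random point measure $\cn = \sum_j \delta_{(x_j, \ct^j, \theta_j)}$ of (grafting point, grafted sub-tree, grafting time) has, with respect to the backward-in-$\theta$ filtration of $(\ct_\theta)$, the predictable compensator $\bm^{\ct_\theta}(dx)\, \bN^{\psi_\theta}[d\ct]\, \ind_{\{\theta \in \Theta^\psi\}}\, d\theta$. Since $\sigma_{\theta_2} - \sigma_{\theta_1} = \sum_{j : \theta_j \in [\theta_2, \theta_1]} \bm^{\ct^j}(\ct^j)$, conditional expectation and Fubini then yield
\[
g(\theta_2) = g(\theta_1) + \int_{\theta_2}^{\theta_1} g(\theta)\, \bN^{\psi_\theta}[\sigma]\, d\theta.
\]
An elementary computation using \reff{def:NN0}, together with $\Ex^{\psi_\theta}[\sigma] = 1/\psi'(\theta)$ (obtained by differentiating \reff{eq:Ns} at $\lambda = 0$) and $\Er_r^{\psi_\theta}[\sigma] = r/\psi'(\theta)$ (via $\sigma = \int_0^\infty \cz_a\, da$ and the CSBP mean formula), then gives
\[
\bN^{\psi_\theta}[\sigma] = \frac{1}{\psi'(\theta)}\Bigl( 2\beta + \int_{(0,\infty)} r^2 \expp{-\theta r}\, \Pi(dr) \Bigr) = \frac{\psi''(\theta)}{\psi'(\theta)} = (\log \psi'(\theta))'.
\]
Differentiating the integral equation yields the linear ODE $g'(\theta) = -g(\theta)(\log \psi'(\theta))'$ with boundary $g(\theta_1) = \sigma_{\theta_1}$, whose solution $g(\theta) = \sigma_{\theta_1}\,\psi'(\theta_1)/\psi'(\theta)$, evaluated at $\theta = \theta_2$, gives the identity.

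The main obstacle is the rigorous derivation of the compensator formula from the special Markov property, which requires careful bookkeeping of both the skeleton pruning marks (at rate $2\beta\, l^{\ct_\theta}(dx)\, d\theta$) and the infinite-node marks (contributing a point-mass part at each $y \in \mathrm{Br}_\infty(\ct_\theta)$ with conditional mass $\Delta_y\, d\theta$ and grafted tree law $\Pr_{\Delta_y}^{\psi_\theta}$); the definition of $\bN^{\psi_\theta}$ in \reff{def:NN0} is precisely engineered so that these two contributions combine into a single Poisson intensity. A secondary technical issue is integrability in the limiting case $\theta_2 = 0$ (where $\psi'(0) = 0$ makes $\sigma_{\theta_2}$ have infinite mean under $\Ex^\psi$), which is handled by first working on $\{\sigma_{\theta_2} < \infty\}$, establishing the identity there, and passing to the limit via monotone convergence as $\theta_2 \downarrow 0$.
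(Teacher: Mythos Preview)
The paper does not actually prove this lemma; it is stated with attribution to \cite{Abraham2009b} and no argument is given. So there is no ``paper's own proof'' to compare against here.

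Your approach is correct and the key computation $\bN^{\psi_\theta}[\sigma]=\psi''(\theta)/\psi'(\theta)$ is right, but you are working harder than necessary. You propose to first establish the full predictable compensator of $\cn$ (essentially Corollary~\ref{cor:cn-T}, which the paper only proves later in Section~\ref{sec:growing}) and then solve an ODE in $\theta$. A shorter route uses the Special Markov Property (Theorem~\ref{SMP}) at the single pair of times $(\theta_2,\theta_1)$: by homogeneity (Theorem~\ref{ProcArbre*}), conditionally on $\ct_{\theta_1}$ the tree $\ct_{\theta_2}$ is obtained by grafting onto $\ct_{\theta_1}$ a Poisson collection of $\psi_{\theta_2}$-trees with intensity
\[
\bm^{\ct_{\theta_1}}(dx)\Big(2\beta\,\N^{\psi_{\theta_2}}[d\ct']+\int_{(0,\infty)} r\,\expp{-\theta' r}\,\Pi_{\theta_2}(dr)\,\P_r^{\psi_{\theta_2}}(d\ct')\Big)\ind_{(0,\theta_1-\theta_2]}(\theta')\,d\theta'.
\]
Taking the mean of the total grafted mass and using $\N^{\psi_{\theta_2}}[\sigma]=1/\psi'(\theta_2)$, $\Er_r^{\psi_{\theta_2}}[\sigma]=r/\psi'(\theta_2)$ gives
\[
\Ex^\psi[\sigma_{\theta_2}\mid\ct_{\theta_1}]
=\sigma_{\theta_1}+\frac{\sigma_{\theta_1}}{\psi'(\theta_2)}\int_0^{\theta_1-\theta_2}\psi''(\theta'+\theta_2)\,d\theta'
=\sigma_{\theta_1}\,\frac{\psi'(\theta_1)}{\psi'(\theta_2)},
\]
with no integral equation needed. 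Your ODE route recovers the same answer and has the merit of yielding the whole curve $\theta\mapsto g(\theta)$ at once, but it rests on machinery the paper develops only after stating the lemma; the two-time SMP argument is self-contained at the point where the lemma appears and is almost certainly what \cite{Abraham2009b} does.
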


Consider the ascension time (or explosion time):
\begin{equation}
 \label{eq:def-A}
A = \inf \{ \theta \in \Theta^\psi,\ \sigma_\theta < \infty \},
\end{equation}
where we use the convention $\inf \emptyset = \theta_\infty$. The following
Theorem gives the distribution of the ascension time $A$ and the distribution of
the tree at this random time. Recall that $\bar \theta=\psi^{-1}(\psi(\theta))$
is defined in \reff{eq:def-barq}. 

\begin{theorem}[\cite{Abraham2009b}] 
\label{ADAvExplo} 
Let $\psi$ be a critical branching mechanism satisfying Assumptions 1 and
2.
\begin{enumerate} 
\item For all $\theta\in \Theta^\psi$, we have $ \Ex^{\psi} [ A > \theta
 ] = \bar \theta - \theta$.
\item If $\theta_\infty < \theta<0$, under $\Ex^\psi$, we have, for any
 non-negative measurable functional $F$,
\begin{equation*}
 \Ex^\psi[ F(\ct_{A+\theta'},\theta'\ge 0) | A=\theta ] =
\psi'(\bar\theta) \Ex^\psi [F(\ct_{\theta'},\theta'\ge 0) \sigma_0
\expp{-\psi(\theta)\sigma_0} ].
\end{equation*} 
\item For all $\theta\in \Theta^\psi$, we have $
 \N^\psi[\sigma_A<+\infty |A=\theta]=1$. 
\end{enumerate} 
\end{theorem}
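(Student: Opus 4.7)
For part~(1), the core observation is the identification $\{A > \theta\} = \{\sigma_\theta = +\infty\}$, which follows from the non-increasing nature of $\theta \mapsto \sigma_\theta$ together with part~(3) (to handle the boundary $\theta = A$). By Theorem~\ref{ProcArbre*}, $\sigma_\theta$ under $\N^\psi$ has the same law as $\sigma$ under $\N^{\psi_\theta}$. Applying \reff{eq:Ns} (and letting $\lambda \downarrow 0$) then gives $\N^{\psi_\theta}[\sigma = +\infty] = \psi_\theta^{-1}(0)$, and \reff{eq:def-barq} identifies this with $\bar\theta - \theta$.

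For part~(3), the process $\theta \mapsto \sigma_\theta$ is c\`adl\`ag and pure-jump, non-increasing in forward $\theta$. The definition of $A$ forces $\sigma_{A-} = +\infty$, so $A$ is necessarily a jump time of $\sigma$; at that jump the value drops from $+\infty$ to $\sigma_A = \sigma_{A+}$, and this right-limit is finite since $\sigma_\theta < \infty$ for every $\theta > A$ in a right neighborhood of $A$. Alternatively, on $\{\sigma_\theta < \infty\}$ the identity \reff{GirThetaBar} realizes $\ct_\theta$ as a subcritical L\'evy tree under $\N^{\psi_{\bar\theta}}$, which has finite total mass almost surely.

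Part~(2) is the main step and combines the Markov property of the pruning process with a Girsanov transform. First, using Theorem~\ref{ProcArbre*} once with $q = \theta$ and once with $q = \bar\theta$, together with \reff{GirThetaBar}, one derives the master identity $\N^\psi[\ind_{\sigma_\theta < \infty}\, F(\ct_{\theta+\theta'}, \theta' \ge 0)] = \N^\psi[F(\ct_{\bar\theta + \theta'}, \theta' \ge 0)]$, valid for any non-negative measurable $F$. Applying the Girsanov formula \reff{Girsanov} with $q = \theta$ (allowed since $\psi(\theta) > 0$ for $\theta < 0$, as $\psi$ is critical) rewrites the right-hand side as $\N^\psi[F(\ct_{\theta'}, \theta' \ge 0)\, \expp{-\psi(\theta)\sigma_0}]$. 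One then differentiates in $\theta$: on the left, a Palm-type argument isolates the contribution of the event $\{A \in d\theta\}$, using the right-continuity from part~(3) to identify $\ct_{A+\theta'}$ with $\ct_{\theta+\theta'}$ in the limit $A \uparrow \theta$; on the right, differentiating $\expp{-\psi(\theta)\sigma_0}$ produces the factor $-\psi'(\theta) \sigma_0\, \expp{-\psi(\theta)\sigma_0}$. Using part~(1) to compute the density of $A$ at $\theta$ as $(\psi'(\bar\theta) - \psi'(\theta))/\psi'(\bar\theta)$ (via $\bar\theta'(\theta) = \psi'(\theta)/\psi'(\bar\theta)$), the factors rearrange into the claimed expression $\psi'(\bar\theta)\, \N^\psi[F(\ct_{\theta'}, \theta' \ge 0)\, \sigma_0\, \expp{-\psi(\theta)\sigma_0}]$.

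The main obstacle is justifying the differentiation step and correctly extracting the conditional distribution at the random time $A$. The key subtlety is that $\ct_A$ is not measurable with respect to $\ct_\theta$ on $\{A \le \theta\}$, so the Markov property at $\theta$ does not directly deliver the law of the trajectory starting at $A$; this is handled by approximating $\{A = \theta\}$ via $\{\theta - \epsilon < A \le \theta\}$ and invoking right-continuity to make the discrepancy between $\ct_A$ and $\ct_\theta$ vanish as $\epsilon \downarrow 0$.
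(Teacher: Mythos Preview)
This theorem is not proved in the present paper; it is quoted from \cite{Abraham2009b}, so there is no in-paper proof to compare against. Your sketch for part~(1) is correct and is essentially the standard computation: $\{A>\theta\}=\{\sigma_\theta=+\infty\}$ up to a null set, and $\N^{\psi_\theta}[\sigma=+\infty]=\psi_\theta^{-1}(0)=\bar\theta-\theta$.

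Your arguments for parts~(2) and~(3), however, have genuine gaps. For~(3), the claim that ``this right-limit is finite since $\sigma_\theta<\infty$ for every $\theta>A$'' is unjustified: the map $\theta\mapsto\sigma_\theta$ is non-increasing, so as $\theta\downarrow A$ the masses $\sigma_\theta$ are \emph{increasing}, and a monotone limit of finite numbers may well be $+\infty$. Right-continuity of the c\`adl\`ag process does not rule out $\sigma_\theta\uparrow+\infty$; one needs a separate argument (in \cite{Abraham2009b} this comes from a direct study of the jump structure of $(\sigma_\theta)$). The ``alternatively'' sentence invoking \reff{GirThetaBar} only gives information about $\ct_\theta$ on $\{\sigma_\theta<\infty\}$ for a \emph{fixed} $\theta$ and says nothing about the random time $A$.

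For~(2), your master identity
\[
\N^\psi\!\left[\ind_{\{\sigma_\theta<\infty\}}F(\ct_{\theta+\theta'},\theta'\ge 0)\right]
=\N^\psi\!\left[F(\ct_{\theta'},\theta'\ge 0)\,\expp{-\psi(\theta)\sigma_0}\right]
\]
is correct (use $q=\bar\theta>0$ in \reff{Girsanov}, not $q=\theta<0$, together with $\psi(\bar\theta)=\psi(\theta)$). But the differentiation step is incomplete: when you form the $\theta$-increment, the left side produces not only the boundary term $\N^\psi[\ind_{\{\theta-\epsilon<A\le\theta\}}F(\ct_{\theta+\theta'})]$ but also a bulk term $\N^\psi[\ind_{\{A\le\theta-\epsilon\}}(F(\ct_{\theta+\theta'})-F(\ct_{\theta-\epsilon+\theta'}))]$ coming from the explicit $\theta$-dependence of the shifted trajectory. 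Your approximation argument addresses only the boundary term. One must show that the bulk contribution on the left matches the corresponding contribution on the right (or is absorbed by a reparametrisation), and this is where the actual work lies; without it the identification of the conditional law does not follow.
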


In other words, at the ascension time, the tree can be seen as a size-biased
critical Lévy tree. A precise description of $\ct_A$ is given in
\cite{Abraham2009b}. Notice that in the setting of \cite{Abraham2009b}, there is
no need of Assumption 2. 

\section{The growing tree-valued process}
\label{sec:growing}
\subsection{Special Markov Property of pruning}

In \cite{Abraham2010a}, the authors prove a formula describing the structure of
a Lévy tree, conditionally on the $\theta$-pruned tree obtained from it in the
(sub)critical case. We will give a general version of this result. From the
measure of marks, $\cm$ in \reff{eq:def-M}, we define a measure of increasing
marks by: 
\begin{equation}
\label{mfleche}
 \cm^\uparrow (dx,d\theta') = \sum_{i\in I^\uparrow}
\delta_{(x_i,\theta_i)}(dx,d\theta'),
\end{equation}
with
\[
I^\uparrow=\left\{i\in I^\text{ske}\cup I^\text{nod}; \cm(\llbracket
 \emptyset,x_i \rrbracket \times [0,\theta_i]) = 1\right\}. 
\]
The atoms $(x_i,\theta_i)$ for $ i\in I^\uparrow$ correspond to marks such that
there are no marks of $\cm$ on $\llbracket \emptyset,x_i \rrbracket$ with a
$\theta$-component smaller than $\theta_i$. In the case of multiple $\theta_j$
for a given node $x_i\in \mathrm{Br}_\infty(\ct)$, we only keep the smallest
one. In the case $\Pi=0$, the measure $\cm^\uparrow$ describes the jumps of a
record process on the tree, see \cite{Abraham2011} for further work in this
direction. The $\theta$-pruned tree can alternatively be defined using
$\cm^\uparrow $ instead of $\cm$ as for $\theta\geq 0$:
\[
\Lambda_\theta(\ct,\cm) = \{ x\in \ct,\ \cm^\uparrow (\llbracket
\emptyset , x \llbracket \times [0,\theta]) = 0 \}. 
\]
We set:
\[
I_\theta^{\uparrow}
=\left\{i\in I^\uparrow, x_i\in
 \mathrm{Lf}(\Lambda_\theta(\ct, \cm))\right\}
=\left\{i\in I^\uparrow, \theta_i<\theta\quad\text{and}\quad
\cm^\uparrow(\llbracket \emptyset,x_i \llbracket \times
[0,\theta]) = 0\right\}
\]
and for $i\in I_\theta^{\uparrow}$:
\[
 \ct^i = \ct\backslash \ct^{\emptyset, x_i}=\{ x\in \ct,\ x_i \in
 \llbracket \emptyset,x\rrbracket \},
\]
where $\ct^{y,x}$ is the connected component of $\ct\backslash\{x\}$ containing
$y$. For $i\in I_\theta^{\uparrow}$, $\ct^i$ is a real tree, and we will
consider $x_i$ as its root. The metric and mass measure on $\ct^i$ are the
restriction of the metric and mass measure of $\ct$ on $\ct^i$. By construction,
we have:
\begin{equation}
 \label{eq:T-from-LT}
\ct=\Lambda_\theta(\ct,\cm) \circledast_{i\in I_\theta^{\uparrow}}
(\ct^i,x_i). 
\end{equation}

Now we can state the general special Markov property.

\begin{theorem}[Special Markov Property]
\label{SMP} 
Let $\psi$ be a branching mechanism satisfying Assumptions 1 and 2. Let $\theta
>0$. Conditionally on $\Lambda_\theta(\ct,\cm)$, the point measure: 
\[
 \cm_\theta^{\uparrow}(dx,d\ct',d\theta') = \sum_{i\in I_\theta^{\uparrow}}
\delta_{(x_i,\ct^i,\theta_i)}(dx,d\ct',d\theta') 
\]
under $\P^\psi_{r_0}$ (or under $\Ex^\psi$) is a Poisson point measure on
$\Lambda_\theta(\ct,\cm)\times \EsAr \times (0,\theta]$ with intensity: 
\begin{equation}
\label{Intensite} 
 \mathbf{m}^{\Lambda_\theta(\ct,\cm)}(dx) \Big( 2\beta
 \Ex^\psi[d\ct'] + \int_{(0,+\infty)} \Pi(dr) \; r \expp{-\theta' r}
 \Pr^\psi_r(d\ct') \Big)\; \ind_{(0,\theta]}(\theta')\; d\theta'.
\end{equation}
\end{theorem}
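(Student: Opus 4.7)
I would prove the theorem in two stages: first for (sub)critical $\psi$ by reinterpreting the Special Markov Property of \cite{Abraham2010a} on the real-tree side, then for super-critical $\psi$ via the Girsanov transformation of Definition \ref{def:Girsanov}.

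In the (sub)critical case, \cite{Abraham2010a} establishes the analogous statement at the level of the exploration process: conditionally on the pruned exploration process, the sub-excursions sitting above the first marks of $\cm$ form a Poisson point measure with explicit intensity. Using the canonical coding map $f\mapsto T^f$ of Section \ref{sec:code}, its continuity (Proposition \ref{prop:cont-TH}), and the intrinsic definition of $\cm$ in terms of the length measure and infinite-degree nodes of $\ct$, I would identify the pruned exploration process with $\Lambda_\theta(\ct,\cm)$ and each sub-excursion above a first mark $(x_i,\theta_i)$ with the cut w-tree $\ct^i$. On the real-tree side the intensity reads exactly $\bm^{\Lambda_\theta(\ct,\cm)}(dx)\bigl(2\beta\Ex^\psi[d\ct']+\int\Pi(dr)\,r\,\expp{-\theta'r}\Pr^\psi_r(d\ct')\bigr)\ind_{(0,\theta]}(\theta')\,d\theta'$: the $2\beta$-rate comes from the skeleton marks on the Brownian component, the density $r\expp{-\theta'r}\Pi(dr)\,d\theta'$ is the joint law of (node-mass, first mark time) on an infinite-degree node of mass $r$ (exponential of rate $r$ in $\theta'$), and the two options $\Ex^\psi$ versus $\Pr^\psi_r$ correspond to the two types of marks. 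The statement under $\Pr^\psi_{r_0}$ then follows from the excursion-measure version via the Poissonian representation of Proposition \ref{Buisson}.

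For super-critical $\psi$, I would fix an arbitrary truncation level $a>0$ and work with $\pi_a(\ct)$. By Definition \ref{def:Girsanov} and Theorem \ref{thm:extension}, the law of $\pi_a(\ct)$ under $\Ex^\psi$ is absolutely continuous with respect to its law under the critical measure $\Ex^{\psi_{\theta^*}}$, with density $\expp{\theta^*\cz_a+\psi(\theta^*)\int_0^a\cz_s\,ds}$. Since $\cm$ depends only on the length measure and the infinite-degree nodes of $\ct$---both preserved by truncation---pruning commutes with $\pi_a$, and the (sub)critical SMP just proven applies under $\Ex^{\psi_{\theta^*}}$ at the truncated level. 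The key step is then to decompose the Girsanov density along $\Lambda_\theta(\ct,\cm)\cap\pi_a(\ct)$ and the truncated cut subtrees $\ct^{i,a}$ (each shifted up by its grafting height $H_{x_i}$), so that the exponential factorizes across the PPM atoms. Using the Girsanov identities $\Ex^{\psi_{\theta^*}}[\expp{-\psi(\theta^*)\sigma'}F(\ct')]=\Ex^\psi[F(\ct')]$ and $\Pr^{\psi_{\theta^*}}_r[\expp{\theta^*r-\psi(\theta^*)\sigma'}F(\ct')]=\Pr^\psi_r[F(\ct')]$ from the Theorem preceding Corollary \ref{GirThetaBar} on each factor, the critical intensity converts back into the super-critical intensity of the statement. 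Letting $a\to\infty$ along the projective construction of $\ct$ in Theorem \ref{thm:extension} concludes the proof.

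The main obstacle is the bookkeeping in the super-critical step: the Girsanov exponential $\expp{\theta^*\cz_a+\psi(\theta^*)\int_0^a\cz_s\,ds}$ must be split into a product over $\Lambda_\theta(\ct,\cm)\cap\pi_a(\ct)$ and the subtrees $\ct^{i,a}$ in such a way that, combined with the critical intensity, it reconstitutes exactly the super-critical intensity claimed. This factorization works because the CSBP decomposes additively across the pruning cut---the mass processes of the parts sum to $\cz_\cdot$ on $\pi_a(\ct)$ at every level---which makes the exponential multiplicative; and because the extra factor $\expp{\theta^*r}$ appearing in the Girsanov density for a L\'evy tree started from mass $r$ matches precisely the $r$ in $r\expp{-\theta'r}\Pi(dr)$, so that the weight on the infinite-node marks in the super-critical intensity is produced correctly.
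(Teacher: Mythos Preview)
Your proposal follows essentially the same route as the paper: omit the (sub)critical case by invoking \cite{Abraham2010a} in the w-tree setting, and reduce the super-critical case to the critical one by truncating at an arbitrary level $a$, applying the Girsanov transformation of Definition \ref{def:Girsanov}, invoking the (sub)critical SMP under $\Ex^{\psi_{\theta^*}}$, and then reassembling. The paper carries out this last step by an explicit Laplace-functional computation (the functions $G$ and $g$ in the proof), but your ``factorize the Girsanov density across the pruned tree and the truncated subtrees'' is the same mechanism expressed in words.

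One point of caution: the Girsanov identity you quote, $\Ex^{\psi_{\theta^*}}[\expp{-\psi(\theta^*)\sigma'}F(\ct')]=\Ex^\psi[F(\ct')]$, is not the one you can actually use here, because under the super-critical $\Ex^\psi$ the subtrees may have $\sigma'=+\infty$. The correct identity is the truncated one from Definition \ref{def:Girsanov}, expressed through $\cz_{a-H_{x_i}}$ and $\int_0^{a-H_{x_i}}\cz_s\,ds$ on each $\ct^{i,a}$; this is exactly what the paper does, and it is consistent with your own insistence on working with the truncated subtrees $\ct^{i,a}$. Once you use that form, your cancellation argument ($\expp{-\theta^* r}$ in $\Pi_{\theta^*}$ against the initial-mass factor from the Girsanov martingale) is correct and matches the paper's manipulation leading from $G$ written with $\Pi_{\theta^*}$ to the same expression with $\Pi$ plus the correction $\psi(\theta^*)-\psi_\theta(\theta^*)$.
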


\begin{proof} 
 It is not difficult to adapt the proof of the special Markov property in
\cite{Abraham2010a} to get Theorem \ref{SMP} in the (sub)critical case by taking
into account the pruning times $\theta_i$ and the w-tree setting; and we omit
this proof which can be found in \cite{Abraham2012a}. We prove how to extend the
result to the super-critical Lévy trees
using the Girsanov transform of Definition \ref{def:Girsanov}.
\par Assume that $\psi$ is super-critical. For $a>0$, we shall write
$\Lambda_{\theta,a} (\ct,\cm)=\pi_a(\Lambda_\theta(\ct,\cm))$ for short.
According to \reff{eq:T-from-LT} and the definition of super-critical Lévy
trees, we have that for any $a>0$, the truncated tree $\pi_a(\ct)$ can be
written as:
\[ 
\pi_a(\ct) = \Lambda_{\theta,a}(\ct,\cm) \circledast_{\begin{subarray}
 \text{ } i\in
I_\theta^{\uparrow}, \\ 
H_{x_i}\leq a
\end{subarray}}
(\pi_{a-H_{x_i}}(\ct^i),x_i) 
\]
and we have to prove that $\sum_{i\in I_\theta^{\uparrow}}
\delta_{(x_i,\ct^i,\theta_i)}(dx,d\ct',d\theta')$ is conditionally on
$\Lambda_{\theta}(\ct,\cm) $ a Poisson point measure with intensity
\reff{Intensite}. Since $a$ is arbitrary, it is enough to prove that the point
measure $\cm_a$, defined by
\[
 \cm_a(dx,d\ct',d\theta') = \sum_{i\in
 I_\theta^{\uparrow}} 
\ind_{\{ H_{x_i} \le a \}} \;
\delta_{(x_i,\pi_{a-H_{x_i}}(\ct^i),\theta_i)} (dx, d\ct',d\theta'),
\]
is conditionally on $\Lambda_{\theta,a}(\ct,\cm) $ a Poisson point measure with
intensity :
\begin{multline}
 \label{eq:intensite-ma}
\ind_{[0,a]}(H_x)\; \bm ^{\Lambda_{\theta}(\ct,\cm)}(dx)\; 
\ind_{(0,\theta]}(\theta')\; d\theta' \\
 \Big( 2\beta
(\pi_{a-H_x})_* \Ex^\psi(d\ct')+ \int_{(0,+\infty)} \Pi(dr) \; r
 \expp{-\theta' r} 
 (\pi_{a-H_x})_*\Pr^\psi_r(d\ct') \Big) .
\end{multline}

Recall $\theta^*$ is the unique real number such that
$\psi_{\theta^*}'(0)=0$, that is, such that $\psi_{\theta^*}$ is
critical. Let $\Phi$ be a non-negative, measurable functional on
$\Lambda_{\theta,a}(\ct,\cm)\times \EsAr \times (0,\theta]$ and let $F$
be a non-negative measurable functional on $\EsAr$. Let
\[
B = \Ex^\psi [ F(\Lambda_{\theta,a}(\ct,\cm)) \exp(-\left\langle \cm_a,\Phi
\right\rangle) ] .
\]
Thanks to Girsanov formula \reff{GirTronc} and the special Markov property for
critical branching mechanisms, we get:
\begin{align*} 
B 
&= \Ex^{\psi_{\theta^*}}
\left[F(\Lambda_{\theta,a}(\ct,\cm)) 
\exp(-\left\langle \cm_a,\Phi \right\rangle)
\exp\left (\theta^* \cz_a(\ct) + \psi(\theta^*) \int_0^a \cz_h(\ct) dh
\right) \right] \\
& = \Ex^{\psi_{\theta^*}}\Big[
F(\Lambda_{\theta,a}(\ct,\cm)) \exp
\Big(\theta^* \cz_a(\Lambda_\theta(\ct,\cm)) + \psi(\theta^*) \int_0^a
\cz_h(\Lambda_\theta(\ct,\cm)) dh \Big) \\
&\hspace{6cm} \exp \Big(-\int \bm^{\Lambda_{\theta,a}(\ct,\cm)}(dx)
G(H_x,x,\theta ) \Big) 
\Big], 
\end{align*}
with $\bm^{\Lambda_{\theta,a}(\ct,\cm)}(dx)= \ind_{[0,a]}(H_x)\; \bm
^{\Lambda_{\theta}(\ct,\cm)}(dx) $ and $G(h,x,\theta)$ equal to:
\begin{multline*} 
\int_0^\theta d\theta' \; \Big \{ 2\beta \Ex^{\psi_{\theta^*}} \left[
1-\exp \left(- \Phi(x,\pi_{a-h}(\ct),\theta') + \theta^* \cz_{a-h} (\ct)
+\psi(\theta^*)\int_0^{a-h} \cz_t(\ct)
dt \right) \right] \\ 
+ \int_{(0,+\infty)}\!\!\!\!\!\!\!\!\!\!\! \Pi_{\theta^*} (dr) r
\expp{-\theta ' 
r}\Er_r^{\psi_{\theta^*}} \!\left[1-\exp (-
\Phi(x,\pi_{a-h}(\ct),\theta') + \theta^* 
\cz_{a-h} (\ct) +\psi(\theta^*)\int_0^{a-h}\!\!\!\!\!\!\!\!\! \cz_t(\ct) dt )
\right]\Big\}. 
\end{multline*}
By using the Poisson decomposition of $\Pr_r^{\psi_{\theta^*}}$ (Proposition
\ref{Buisson}), we see that $G(h,x,\theta)$ can be written as:
\[
G(h,x,\theta ) = \int_0^\theta d\theta' \Big\{2\beta g(h,x,\theta ') +
\int_{(0,\infty)} \Pi_{\theta^*} (dr)\ r \expp{-\theta ' r}\left(1- \exp(-r
g(h,x,\theta '))\right)\Big\},
\]
with 
\[
g(h,x,\theta ') = \Ex^{\psi_{\theta^*}} \left[
1-\exp \left(-
\Phi(x,\pi_{a-h}(\ct),\theta') +\theta^* \cz_{a-h} (\ct)
+\psi(\theta^*)\int_0^{a-h} \cz_t(\ct)
dt\right) \right].
\]
Thanks to the Girsanov formula and \reff{eq:N-q}, we get:
\begin{align*} 
g(h,x,\theta ') 
&= \Ex^{\psi_{\theta^*}} \Big[
 (1-\exp (- \Phi(x,\pi_{a-h}(\ct),\theta'))) \exp\Big( \theta^* \cz_{a-h} (\ct)
 +\psi(\theta^*)\int_0^{a-h}
 \cz_t(\ct)dt \Big) \Big] \\
 & \hspace{2cm} + \Ex^{\psi_{\theta^*}}\Big[1-\exp\Big(\theta^* \cz_{a-h} (\ct)
 +\psi(\theta^*)\int_0^{a-h} \cz_t(\ct)) dt \Big) \Big]\\
&= \Ex^{\psi} \Big[ 1-\exp
(-\Phi(x,\pi_{a-h}(\ct),\theta')) \Big]- \theta^*.
 \end{align*}
With $\tilde g(h,x,\theta ')= \Ex^{\psi} \Big[ 1-\exp
(-\Phi(x,\pi_{a-h}(\ct),\theta')) \Big]$ and thanks to \reff{eq:pi-q}, we get:
\begin{multline*}
 G(h,x,\theta ) 
=\int_0^\theta d\theta' \Big\{2\beta \tilde g(h,x,\theta ') +
\int_{(0,\infty)} \Pi (dr)\ r \expp{-\theta ' r}\left(1- \exp(-r
\tilde g(h,x,\theta '))\right)\Big\}\\
+\psi(\theta^*) - \psi_\theta(\theta^*).
\end{multline*}
Notice that from the definition of $G$ we have $g$ replaced by $\tilde g$,
$\Pi_{\theta^*}$ replaced by $\Pi$ and the additional term $\psi(\theta^*) -
\psi_\theta(\theta^*)$. As $ \int \bm^{\Lambda_{\theta,a}(\ct,\cm)}(dx) =
\int_0^a \cz_h(\Lambda_\theta(\ct))dh $, we get:
\begin{multline}
 \label{eq:ARH}
B = \Ex^{\psi_{\theta^*}}\! \left[
F(\Lambda_{\theta,a}(\ct,\cm))R(\Lambda_{\theta,a}(\ct,\cm)) \right. \\
 \left. \exp
\Big(\theta^* \cz_a(\Lambda_\theta(\ct,\cm)) + \psi_\theta(\theta^*)
\int_0^a \cz_h(\Lambda_\theta(\ct,\cm)) dh \Big)\!\right],
\end{multline}
with 
\begin{multline}
R(\ct)=\exp \Big(-\int \bm^\ct (dx) 
\int_{0}^{\theta} d\theta'\Big[2\beta
\tilde g(H_x,x,\theta ') + \\
\int_{(0,\infty)} \Pi(dr)\ r \expp{-\theta ' r}\left(1- \exp(-r
\tilde g(H_x,x,\theta '))\right)
\Big] \Big).
\end{multline}
Taking $\Phi=0$ (and thus $R=1$) in \reff{eq:ARH} yields:
\begin{multline}
 \label{eq:AFH}
\Ex^\psi [ F(\Lambda_{\theta,a}(\ct,\cm)) ] \\
=\Ex^{\psi_{\theta^*}}\left[
F(\Lambda_{\theta,a}(\ct,\cm)) \exp
\Big(\theta^* \cz_a(\Lambda_\theta(\ct,\cm)) + \psi_\theta(\theta^*) \int_0^a
\cz_h(\Lambda_\theta(\ct,\cm)) dh \Big) \right].
\end{multline}
Using \reff{eq:AFH} with $F$ replaced by $FR$ gives:
\[
\Ex^\psi\Big[ \exp(-\langle \cm_a , \Phi
\rangle) F(\Lambda_{\theta,a}(\ct,\cm)) \Big]= B =
\N^\psi\left[F(\Lambda_{\theta,a}(\ct,\cm))R(\Lambda_{\theta,a}(\ct,\cm))
\right]. 
\]
This implies that $\cm_a$ is, conditionally on $\Lambda_{\theta,a}(\ct,\cm)$, a
Poisson point measure with intensity \reff{eq:intensite-ma}. This ends the
proof. 
\end{proof}

\subsection{An explicit construction of the growing process}
\label{sec:growing-process}

In this section, we will construct the growth process using a family of Poisson
point measures. Let $\psi$ be a branching mechanism satisfying Assumptions 1 and
2. Let $\theta\in \Theta^\psi$. According to \reff{def:NN0} and \reff{eq:pi-q},
we have:
\begin{equation} 
\label{def:NN}
 \bN^{\psi_\theta}[\ct\in \bullet] = 2\beta \Ex^{\psi_\theta}[\ct\in
 \bullet] + 
\int_{(0,+\infty)} \Pi(dr) r \expp{-\theta r} \Pr_r^{\psi_\theta}(\ct
\in \bullet). 
\end{equation}

Let $\ct^{(0)}\in\T$ with root $\emptyset$. For $q\in\Theta^\psi$ and $q\le
\theta$, we set:
\[
\at_q^{(0)}=\ct^{(0)}\quad\mbox{and}\quad \bm_q^{(0)}=\bm^{\ct^{(0)}}.
\]
We define the w-trees grafted on $\ct^{(0)}$ by recursion on their generation.
We suppose that all the random point measures used for the next construction are
defined on $\EsAr$ under a probability measure $Q^{\ct^{(0)}}(d\omega)$.
\par Suppose that we have constructed the family of trees and mass measures
$((\at_q^{(k)},\bm_q^{(n)}), 0\leq k\leq n, q \in \Theta^\psi\cap (-\infty
,\theta))$. We write
$$\at^{(n)}=\bigsqcup_{q\in\Theta^\psi,\ q\le \theta}\at_q^{(n)}.$$
We define the ($n+1$)-th generation as follows. Conditionally on all trees from
generations smaller than $n$, $(\at_q^{(k)},\ 0\leq k\leq n,\ q \in
\Theta^\psi\cap (-\infty ,\theta))$, let 
\[
\cn^{n+1}_{\theta}(dx,d\ct,dq)=\sum_{j\in J^{(n+1)}}
\delta_{(x_j,\ct^j,\theta_j)}(dx,d\ct,dq) 
\] 
be a Poisson point measure on $ \at^{(n)} \times \T\times\Theta^\psi$ with
intensity:
\[
\mu^{n+1}_\theta(dx,d\ct,dq)=
\bm^{(n)}_q(dx) \bN^{\psi_q}[d\ct]\; \ind_{\{ q\leq\theta\}}\; dq.
\]
For $q\in\Theta^\psi$ and $q\le\theta$, we set
$$J_q^{(n+1)}=\{j\in J^{(n+1)},\ q<\theta_j\}$$
and we define the tree $\at_q^{(n+1)}$ and the mass measure $\bm_q^{(n+1)}$ by:
\[
\at_q^{(n+1)}=\at_q^{(n)}\circledast_{j\in
 J_q^{(n+1)}}(\ct^j,x_j)\quad \mbox{and}\quad \bm_q^{(n+1)}=\sum_{j\in
 J_q^{(n+1)}}\bm^{\ct^j}(dx).
\]

Notice that by construction, $(\at_q^{(n)}, n\in \N)$ is a non-decreasing
sequence of trees. We set $\at_q$ the completion of $\cup _{n\in \N}
\at_q^{(n)}$, which is a real tree with root $\emptyset$ and obvious metric
$d^{\at_q}$, and we define a mass measure on $\at_q$ by $\bm^{\at_q}= \sum_{n\in
\N} \bm^{(n)}_q$.
\par For $q\in \Theta^\psi$ and $q<\theta$, we consider $\cf_{q}$ the
$\sigma$-field generated by $\at^{(0)}$ and the sequence of
random point measures $(\ind_{\{q'\in [q,\theta
]\}}\cn_{\theta}^{(n)}(dx,d\ct,dq') , n\in \N)$. We set $\cn_{\theta}=\sum_{n\in
\N} \cn^n_{\theta}$. The backward random point process $q\mapsto \ind_{\{q\leq
q'\}} \cn_\theta(dx, d\ct, dq')$ is by construction adapted to the backward
filtration $(\cf_q, q\in \Theta^\psi\cap (-\infty ,\theta])$.
\par The proof of the following result is postponed to Section
\ref{sec:proof-thm}. 

\begin{theo}
 \label{theo:at=ct}
 Let $\psi$ be a branching mechanism satisfying Assumptions 1 and
2. Under $Q^{\psi_\theta}:=\N^{\psi_\theta}[d\ct^{(0)}]Q^{\ct^{(0)}}(d\omega)$,
the process 
\[ (( \at_q,d^{\at_q}, \emptyset, \bm^{\bar \at_q}), q\in \Theta^\psi\cap
(-\infty ,\theta])\]
 is a $\T$-valued backward Markov process with respect to the backward
filtration $\cf^{\theta}= (\cf_q, q\in \Theta^\psi\cap (-\infty ,\theta])$. It
is distributed as $((\ct_q,\bm^{\ct_q}), q\in \Theta^\psi\cap (-\infty
,\theta])$ under $\N^\psi$.
\end{theo}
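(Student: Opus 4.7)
The plan is to realize the construction as an iterated application of the Special Markov Property (Theorem \ref{SMP}). Under $\N^\psi$, the SMP decomposes $\ct$ as $\ct_\theta=\Lambda_\theta(\ct,\cm)$ together with a Poisson point measure of first-level subtrees $\{(x_i,\theta_i,\ct^i)\}$ of intensity \reff{Intensite}; each $\ct^i$ is itself a $\psi$-L\'evy tree under $\Ex^\psi$ or $\Pr_r^\psi$ carrying independent pruning marks $\cm^i$ on which the SMP can be reapplied. Iterating yields a tree-of-trees decomposition of $\ct$ in which every subtree is labeled by its first-mark time. I would then show that the construction of Section \ref{sec:growing-process} encodes precisely this iterated decomposition, with each gen-$(n{+}1)$ subtree of label $\theta'$ hanging off a gen-$n$ subtree of label $\theta''>\theta'$ corresponding to an SMP subtree (inside the gen-$n$ piece) with first-mark time $\theta'$.

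The key identification to verify is that a gen-$n$ subtree of label $\theta'$, distributed in the construction as $\bN^{\psi_{\theta'}}$, represents the further pruning at time $\theta'$ of the ``true'' SMP subtree, whose law is $2\beta\Ex^\psi[d\ct']+\int_{(0,+\infty)}\Pi(dr)\,r e^{-\theta' r}\Pr_r^\psi[d\ct']$ (the measure appearing in \reff{Intensite} at pruning time $\theta'$). By Theorem \ref{ProcArbre*}, pruning a tree under $\Ex^\psi$ (resp.\ $\Pr_r^\psi$) at time $\theta'$ yields a tree under $\Ex^{\psi_{\theta'}}$ (resp.\ $\Pr_r^{\psi_{\theta'}}$); summing reproduces $\bN^{\psi_{\theta'}}$ via \reff{def:NN}. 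Thus every generation of the construction reveals the subtrees that had been pruned off inside the previous generation's pieces, and matching of the Poisson intensities across generations (with no change of variables, since the first-mark time in SMP equals the construction label) shows that the two random structures agree jointly.

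Once the correspondence is established, fixing $q\in\Theta^\psi\cap(-\infty,\theta]$ and collecting all subtrees with labels greater than $q$ across all generations of the construction produces exactly the family of subtrees alive at time $q$ in the iterated SMP decomposition of $\ct$; grafted onto $\ct_\theta$, they reconstruct $\ct_q=\Lambda_q(\ct,\cm)$ with the prescribed law $\N^{\psi_q}$. The backward Markov property with respect to $\cf^\theta$ then follows from the independence of the Poisson measures across generations together with the fact that the aggregated intensity of subtrees of label $q'<q$ attached to $\at_q$ is $\bm^{\at_q}(dx)\bN^{\psi_{q'}}[d\ct']\ind_{\{q'<q\}}dq'$, which depends only on $\at_q$. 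I expect the main technical obstacle to be verifying that $\at_q$ belongs almost surely to $\T$: one must control the height and mass of subtrees accumulating within any ball of radius $r$ centered at the root, exploiting the exponential damping $e^{-q'r}$ in the intensity and standard CSBP tail estimates for individual subtree masses and heights. A secondary subtlety arises in the super-critical regime $\theta<0$, where infinite-mass subtrees appear and the identification must be carried out on the height-truncated trees $\pi_a(\at_q)$, using the Girsanov transformation of Definition \ref{def:Girsanov}.
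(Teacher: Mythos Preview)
Your approach is essentially the one the paper takes: iterate the Special Markov Property, identify the generation-$n$ subtree of label $\theta'$ in the construction with the $\theta'$-pruned version $\Lambda_{\theta'}$ of the corresponding SMP subtree, and match the Poisson intensities generation by generation. The paper also reduces to two-dimensional marginals $(\at_{\theta_0},\at_\theta)$ versus $(\ct_{\theta_0},\ct_\theta)$ up front (since both processes are backward Markov), which streamlines the argument slightly compared to your ``all $q$ at once'' phrasing, but this is cosmetic.

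The one substantive difference is your handling of the technical obstacle. You propose to verify $\at_q\in\T$ directly by controlling accumulation of subtree heights and masses in bounded balls via the damping $e^{-q'r}$ and CSBP tail estimates. The paper avoids this entirely: having coupled the construction to the iterated decomposition of $\ct_0$, it observes that $\bigcup_n \at^{(n)}_{\theta_0}$ is (in distribution) the set $\ct'=\{x\in\ct_0:\cm^\uparrow(\llbracket\emptyset,x\llbracket\times[0,\theta])<\infty\}$, and then shows $\bm^{\ct_0}(\ct_0\setminus\ct')=0$ by quoting the no-loss-of-mass result for the fragmentation of L\'evy trees. Density of leaves plus equality of skeletons then gives $\overline{\ct'}=\ct_0$, so the completion $\at_{\theta_0}$ inherits the w-tree property from $\ct_0$ for free. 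Your direct route may well succeed, but the paper's is cleaner and sidesteps any delicate summability estimates; the super-critical case is handled (as you anticipated) by truncation $\pi_a$ and Girsanov.
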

Notice the Theorem in particular entails that $( \at_q,d^{\at_q}, \emptyset,
\bm^{\bar \at_q})$ is a w-tree for all $q$. 
\par We shall use the following Lemma. 
\begin{lem}
Let $\psi$ be a branching mechanism satisfying Assumptions 1 and
2. Let $K$ be a measurable non-negative process (as a function of $q$) defined
on
$\R_+\times\T\times\T$ which is predictable with respect to the backward
filtration $\cf^{\theta}$. We have:
\[
 Q^{\psi_\theta}\left[\int \cn_\theta(dx, d\ct,dq)\;
 K(q, \at_{q}, \at_{q-}) \right]
 =Q^{\psi_\theta}\left[\int K\Big(q, \at_q ,\at_q \circledast
 (\ct,x)\Big) \; 
 \mu_\theta(dx,d\ct,dq) \right],
\]
where $\mu_\theta(dx,d\ct,dq)= \sum_{n\in \N^*}
\mu^{n}(dx,dT,dq)=\bm^{\at_q}(dx) \bN^{\psi_q}[d\ct] \; \ind_{\{q\in
\Theta^\psi, q\leq \theta\}}\; dq$. 
\end{lem}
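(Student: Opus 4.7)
The plan is to decompose $\cn_\theta$ along generations and iterate Mecke's formula for Poisson point measures. Since $\cn_\theta = \sum_{n \geq 1} \cn^n_\theta$ and $\bm^{\at_q} = \sum_{k \geq 0} \bm_q^{(k)}$ by construction, we have $\mu_\theta = \sum_{n \geq 1} \mu^n_\theta$ with $\mu^n_\theta(dx, d\ct, dq) = \bm_q^{(n-1)}(dx)\, \bN^{\psi_q}[d\ct]\, \ind_{\{q \in \Theta^\psi,\, q \leq \theta\}}\, dq$. By monotone convergence it suffices to establish, for each fixed $n \geq 1$,
\[
 Q^{\psi_\theta}\!\left[\int \cn^n_\theta(dx, d\ct, dq)\, K(q, \at_q, \at_{q-})\right] = Q^{\psi_\theta}\!\left[\int K(q, \at_q, \at_q \circledast (\ct, x))\, \mu^n_\theta(dx, d\ct, dq)\right].
\]

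I would next rewrite the left-hand side. The strict inequality in $J_q^{(k)} = \{j : q < \theta_j\}$ makes $q \mapsto \at_q$ right-continuous with left limits and gives $\at_{\theta_j-} = \at_{\theta_j} \circledast (\ct^j, x_j)$ at each atom $(x_j, \ct^j, \theta_j)$ of $\cn^n_\theta$; hence the LHS integrand evaluated on atoms equals $K(\theta_j, \at_{\theta_j}, \at_{\theta_j} \circledast (\ct^j, x_j))$. The crucial structural observation is that $\at_{\theta_j}$ depends neither on atom $j$ nor on any later graft landing on the subtree $\ct^j$. Indeed, $\ct^j$ is grafted at time $\theta_j$, so $\ct^j \subset \at^{(n)}_q$ only for $q < \theta_j$; any atom $(x_{j'}, \ct^{j'}, \theta_{j'})$ of a generation $m > n$ with $\theta_{j'} \geq \theta_j$ has grafting point $x_{j'} \in \at^{(m-1)}_{\theta_{j'}}$, which does not intersect $\ct^j$. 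Consequently $\at_{\theta_j}$ is a measurable function of atoms of all generations with time strictly greater than $\theta_j$, and is unaffected if one removes atom $j$ or adds any new atom at a time $q' \leq \theta_j$.

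The final step is to apply Mecke's formula. I would condition on $\cg^{(n-1)} = \sigma(\at^{(0)}, \cn^1, \ldots, \cn^{n-1})$ together with the independent Poisson randomness $\xi$ used to build generations $\geq n+1$; under this conditioning $\cn^n_\theta$ is still a Poisson point measure with intensity $\mu^n_\theta$. Writing $\at_q = \Psi_q(\cn^n_\theta; \cg^{(n-1)}, \xi)$, the observation above yields $\Psi_q(\cn^n_\theta + \delta_{(x, \ct, q)}) = \Psi_q(\cn^n_\theta)$ for every $(x, \ct)$, so Mecke's formula gives
\[
 Q^{\psi_\theta}\!\left[\sum_{j \in J^{(n)}} K(\theta_j, \at_{\theta_j}, \at_{\theta_j} \circledast (\ct^j, x_j)) \,\middle|\, \cg^{(n-1)}, \xi \right] = Q^{\psi_\theta}\!\left[\int K(q, \at_q, \at_q \circledast (\ct, x))\, \mu^n_\theta(dx, d\ct, dq) \,\middle|\, \cg^{(n-1)}, \xi\right].
\]
Taking overall expectations and summing over $n$ concludes the argument. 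The main obstacle is the structural independence claim of the preceding paragraph: it relies on the strict inequality in $J_q^{(k)}$ and on carefully unwinding the iterative construction across infinitely many generations; once granted, Mecke's formula and Fubini close the proof. The predictability hypothesis on $K$ ensures that the integrand on the right-hand side is a predictable functional with respect to $\cf^\theta$, so the identity has the expected interpretation as the compensation formula for the predictable compensator $\mu_\theta$ of $\cn_\theta$.
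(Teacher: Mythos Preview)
Your proposal is correct and follows essentially the same route as the paper: decompose $\cn_\theta$ by generation, observe that at an atom $(x_j,\ct^j,\theta_j)$ of $\cn^n_\theta$ the tree $\at_{\theta_j}$ depends only on atoms (of all generations) with time strictly larger than $\theta_j$, and apply the Palm/Mecke formula to $\cn^n_\theta$ conditionally on the earlier generations. The only cosmetic difference is in the bookkeeping of the later generations: the paper absorbs them into a conditional expectation $F_n$ given $(\at_s^{(k)},\,k\le n,\,s\le\theta)$ before applying Palm, whereas you condition additionally on an auxiliary randomness $\xi$ driving generations $\ge n+1$ and apply Mecke directly; this is legitimate provided the recursive construction is realised on a product probability space so that $\xi$ is genuinely independent of $\cn^n_\theta$, a point worth making explicit.
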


This means that the predictable compensator of $\cn_\theta$ is given by:
\[
\mu_\theta(dx,d\ct,dq)=\bm^{\at_q}(dx)
\bN^{\psi_q}[d\ct]\; \ind_{\{q \in \Theta^\psi,
 q\leq\theta\}}\; dq .
\]
Notice this construction does not fit in the usual framework of random point
measures as the support at time $q$ of the predictable compensator is the
(predictable backward in time) random set $\at_q\times \T\times \Theta^\psi$.

\begin{proof}
Based on the recursive construction, we have:
\begin{multline*}
Q^{\psi_\theta}\left[\int \cn_\theta(dx, d\ct,dq)\;
 K(q, \at_{q}, \at_{q-})
 \right]\\
=\sum_{n=0}^{+\infty}Q^{\psi_\theta}\left[Q^{\psi_\theta}\left[\int
 \cn_\theta^n(dx,d\ct,dq)\; 
 K(q, \at_{q}, \at_{q}\circledast (\ct, x))\Bigm|
 (\at_s^{(k)},\ k\le n,\ s\le \theta)\right]\right].
\end{multline*}
Now, by construction, we have that:
\[
\at_q=\at_q^{(n)}\circledast _{j\in
 J_q^{(n)}}(\tilde\ct_j,x_j)
\]
for $\tilde\ct_j=\at_q\backslash\at_q^{(x_j,\emptyset)}$ which is a
measurable function of $\ind_{\{q'>q\}}\cn_\theta^n(dx, d\ct,dq')$ and of the
point measures $\ind_{\{q'>q\}} \cn_\theta^\ell(dx, d\ct, dq')$ for $\ell\ge
n+1$. Therefore, applying Palm formula with the function
\begin{multline*}
F_n\Big(q,\ct,x,\sum_{j\in
 J^{(n)},q_j>q}\delta_{(x_j,\ct^j,\theta_j)}\Big)
=Q^{\psi_\theta}\left[K\Big(q, \at_q^{(n)}\circledast _{j\in
 J_q^{(n)}}(\tilde\ct_j,x_j),\right.\\
\left.\at_q^{(n)}\circledast _{j\in
 J_q^{(n)}}(\tilde\ct_j,x_j)\circledast(\ct,x)\Big)\Bigm|
 (\at_s^{(k)},\ k\le n,\ s\le \theta),\cn_\theta^n\right],
\end{multline*}
we get:
\begin{align*}
Q^{\psi_\theta} & \left[\int \cn_\theta(dx, d\ct,dq)\;
 K(q, \at_{q}, \at_{q-}) \right]\\
& =\sum_{n=0}^{+\infty}Q^{\psi_\theta}\Big[Q^{\psi_\theta}\Big[\int
 \cn_\theta^n(dx,d\ct,dq)\; \\
&\hspace{4cm} F_n\Big(q,\ct,x,\sum_{j\in
 J^{(n)},q_j>q}\delta_{(x_j,\ct^j,\theta_j)}\Big) \Bigm|
 (\at_s^{(k)},\ k\le n,\ s\le \theta)\Big]\Big]\\
& =\sum_{n=0}^{+\infty}Q^{\psi_\theta}\Big[Q^{\psi_\theta}\Big[\int
 \mu^n_\theta (dx,d\ct,dq)\; \\
&\hspace{4cm} F_n\Big(q,\ct,x,\sum_{j\in
 J^{(n)},q_j>q}\delta_{(x_j,\ct^j,\theta_j)}\Big) \Bigm|
 (\at_s^{(k)},\ k\le n,\ s\le \theta)\Big]\Big]\\
& =\sum_{n=0}^{+\infty}Q^{\psi_\theta}\left[Q^{\psi_\theta}\left[\int
\mu_\theta^n(dx,d\ct,dq)\;
 K\Big(q, \at_q^{(n)}\circledast _{j\in
 J_q^{(n)}}(\tilde\ct_j,x_j),\right.\right.\\
& \qquad\qquad\qquad\qquad \left.\left.\at_q^{(n)}\circledast _{j\in
 J_q^{(n)}}(\tilde\ct_j,x_j)\circledast(\ct,x)\Big)\Bigm|
 (\at_s^{(k)},\ k\le n,\ s\le \theta)\right]\right]\\
& =\sum_{n=0}^{+\infty}Q^{\psi_\theta}\left[\int
\mu_\theta^n(dx,d\ct,dq)\;K\Big(q, \at_{q}, \at_{q}\circledast(\ct,x)\Big)
 \right]\\
& =Q^{\psi_\theta}\left[\int K\Big(q, \at_q , \at_q \circledast
 (T,x)\Big) \; 
 \mu_\theta(dx,d\ct,dq) \right]. 
\end{align*}
\end{proof}

It can be noticed that the map $q\mapsto \at_q$ is non-decreasing càdlàg
(backwards in time) and that we have, for $j\in \cup _{n\in \N} J^{(n)}$,
$x_j\in \at_{\theta_j}$: $ \at_{\theta_j-}= \at_{\theta_j}\circledast
(\ct^j,x_j)$. In particular, we can recover the random measure $\cn_\theta$ from
the jumps of the process $( \at_q, q\in \Theta^\psi\cap (-\infty ,\theta])$.
This and the natural compatibility relation of $\cn_\theta$ with respect to
$\theta$ gives the next Corollary.

\begin{cor}
 \label{cor:cn-T}
Let $\psi$ be a branching mechanism satisfying Assumptions 1 and
2. Let $ (\ct_\theta, \theta\in \Theta^\psi)$ be defined under $\N^\psi$. Let
\[
\cn=\sum_{j\in
 J}\delta_{(x_j,\ct^j,\theta_j)}
\]
 be the random point measure defined as follows:
\begin{itemize}
\item The set $\{\theta_j; j\in J\}$ is the set of jumping times of the
 process $ (\ct_\theta, \theta \in \Theta^\psi)$: for $j\in J$,
 $\ct_{\theta_j-}\neq \ct_{\theta_j}$.
\item The real tree $\ct^j$ is the closure of $\ct_{\theta_j-} \setminus
 \ct_{\theta_j}$.
\item The point $x_j$ is the root of $\ct^j$ (that is $x_j$ is the only
 element $y\in \ct_{\theta_j-}$ such that $x\in \ct^j$ implies
 $\llbracket y,x \rrbracket \subset \ct^j$).
\end{itemize}
Then the backward point process $\theta \mapsto \ind_{\{\theta\leq q'\}}
\cn(dx, d\ct, dq')$ defined on $\Theta^\psi$ has predictable
compensator: 
\[
\mu(dx,d\ct,dq)=\bm^{\ct_q}(dx) \bN^{\psi_q}[d\ct]\; \ind_{\{q\in
 \Theta^\psi\}}\; dq , 
\]
with respect to the backward left-continuous filtration $\cf=(\cf_\theta,
\theta\in \Theta^\psi)$ defined by:
\[
\cf_\theta=\sigma((x_j, \ct^j,\theta_j); \theta\leq \theta_j)=
\sigma(\ct_{q-}; \theta\leq q).
\]
More precisely, for any non-negative predictable process $K$ with respect to the
backward filtration $\cf$, we have:
\begin{equation}
 \label{eq:compensation}
 \N^\psi \left[\int \cn(dx, d\ct,dq)\;
 K\Big( q,\ct_{q}, \ct_{q-}\Big) \right] 
 =\N^\psi \left[\int \mu(dx,dT,dq) \; K\Big( q,\ct_q , \ct_q
 \circledast (T,x)\Big) \; \right]. 
\end{equation}
\end{cor}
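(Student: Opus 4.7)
The plan is to transfer the compensator identity from the explicit construction to the intrinsic process $(\ct_\theta, \theta\in\Theta^\psi)$ via Theorem~\ref{theo:at=ct}, using that both the random point measure and the backward filtration can be read off directly from the jumps of the tree-valued process.

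First, I would verify that the random point measure $\cn_\theta=\sum_{n\in\N}\cn^n_\theta$ of the explicit construction coincides, as a functional of the trajectory $(\at_q, q\in\Theta^\psi\cap(-\infty,\theta])$, with the jump measure described in the statement of the Corollary. By construction of the grafting, for each atom $(x_j,\ct^j,\theta_j)$ of some $\cn^n_\theta$ one has $\at_{\theta_j-}=\at_{\theta_j}\circledast(\ct^j,x_j)$, the tree $\ct^j$ is the closure of $\at_{\theta_j-}\setminus\at_{\theta_j}$, and $x_j$ is its root in the sense of the Corollary. Since $q\mapsto\at_q$ is non-decreasing c\`adl\`ag backwards in time and the construction exhausts all jumping times, the map trajectory $\mapsto$ jump measure produces exactly $\cn_\theta$. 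Similarly, the backward filtration $\cf^\theta=(\cf_q, q\leq\theta)$ coincides with the natural backward filtration $\sigma(\at_{q'-}, q'\geq q)$ of the process $(\at_q)$, so $\cf$-predictability of a process as a functional of $(\ct_{q'-}, q'\geq q)$ matches $\cf^\theta$-predictability as a functional of $(\at_{q'-}, q'\geq q)$.

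Next, I would apply Theorem~\ref{theo:at=ct}: under $Q^{\psi_\theta}$ the process $(\at_q, q\in\Theta^\psi\cap(-\infty,\theta])$ has the same law as $(\ct_q, q\in\Theta^\psi\cap(-\infty,\theta])$ under $\N^\psi$. Pushing the pair (point measure, filtration) through this identification in law, the Lemma immediately yields, for any non-negative $\cf$-predictable process $K$,
\[
\N^\psi\!\left[\int \ind_{\{q\leq\theta\}}\,\cn(dx,d\ct,dq)\,K\bigl(q,\ct_q,\ct_{q-}\bigr)\right]
=\N^\psi\!\left[\int\ind_{\{q\leq\theta\}}\,\mu(dx,d\ct,dq)\,K\bigl(q,\ct_q,\ct_q\circledast(\ct,x)\bigr)\right],
\]
with $\mu$ as in the statement. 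Letting $\theta$ increase through $\Theta^\psi$ and using monotone convergence (both sides are non-decreasing in $\theta$ by positivity of $K$) yields \reff{eq:compensation}. Finally, the identity \reff{eq:compensation} for all non-negative $\cf$-predictable $K$ is exactly the definition of $\mu$ being the $\cf$-predictable compensator of $\cn$ (see, e.g., classical references on random measures).

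The main obstacle I expect is the careful verification of the first step, namely the measurable identification between the abstract point measure $\cn_\theta$ constructed generation by generation and the jump measure read off the trajectory: one must check that no jump is lost (every atom of some $\cn^n_\theta$ is indeed a jump of $(\at_q)$, and conversely every jump of $(\at_q)$ comes from a unique atom of a unique $\cn^n_\theta$), and that the identifications of $x_j$ and $\ct^j$ from the trajectory agree with those of the construction. Once this is done, together with the matching of filtrations, the rest is a straightforward transfer of law plus a monotone limit in $\theta$.
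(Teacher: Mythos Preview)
Your proposal is correct and follows essentially the same approach as the paper: recover $\cn_\theta$ from the jumps of the trajectory $(\at_q)$, transfer the Lemma's identity to $(\ct_q)$ via the distributional identification of Theorem~\ref{theo:at=ct}, and then pass to the limit in $\theta$ using the compatibility of the $\cn_\theta$. The paper only sketches these steps in the paragraph preceding the Corollary, whereas you spell out the matching of filtrations and the monotone convergence argument more explicitly, but the underlying argument is the same.
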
 

\begin{rem}
 \label{rem:Assumption1-cor}
Notice that Assumption 2 is assumed only for technical measurability condition,
see Remark \ref{rem:Assumption2}. We conjecture that this results holds also if
Assumption 2 is not in force. 
\end{rem}

As a consequence, thanks to property 3 of Theorem \ref{ADAvExplo}, we get, with
the convention $\sup\emptyset=\theta_\infty $, that:
\[
A=\sup\{\theta_j, j\in J \text{ and } \sigma^j=+\infty\}
\quad\text{with}\quad
\sigma_j=\bm^{\ct^j}(\ct^j).
\]

\subsection{Proof of Theorem \ref{theo:at=ct}}
\label{sec:proof-thm}
By construction, it is clear that the process $(\at_q, q\in \Theta^\psi\cap
(-\infty ,\theta])$ is a backward Markov process with respect to the backward
filtration $(\cf_q, q\in \Theta^\psi\cap (-\infty ,\theta])$. By construction
this process is càglàg in backward time. Since the process $(\ct_q, q\in
\Theta^\psi)$ is a forward càdlàg Markov process, it is enough to check that for
$\theta_0\in \Theta^\psi$, such that $\theta_0<\theta$, the two dimensional
marginals $ (\at_{\theta_0}, \at_{\theta})$ and $(\ct_{\theta_0},\ct_{\theta})$
have the same distribution.
\par Replacing $\psi$ by $\psi_{\theta_0}$, we can assume that $\theta_0=0$ and
$0<\theta$. We shall decompose the big tree $\ct_{0}$ conditionally on the small
tree $\ct_{\theta}$ by iteration. This decomposition is similar to the one which
appears in \cite{Abraham2007d} or \cite{Voisin2010} for the fragmentation of the
(sub)critical Lévy tree, but roughly speaking the fragmentation is here frozen
but for the fragment containing the root.
\par We set $\ct^{(0)}=\ct_\theta$ and $\tilde \bm^{(0)}=\bm^{\ct_\theta}$, so
that $(\at^{(0)}, \bm^{{(0)}})$ and $(\ct^{(0)},\tilde \bm^{(0)}) $ have the
same distribution. Recall notation $\cm^\uparrow$ from \reff{mfleche} as well as
\reff{eq:T-from-LT}: $\ct_0=\ct^{(0)} \circledast_{i\in I_\theta^{\uparrow,1}}
(\ct^i,x_i)$, where we write $ I_\theta^{\uparrow,1}= I_\theta^{\uparrow}$ and
where $\cp^1=\sum_{i\in I_\theta^{\uparrow,1}} \delta_{(x_i,\ct^i,\theta_i)}$
is, conditionally on $\ct^{(0)}$, a Poisson point measure with intensity:
\[
\nu^1 (dx,d\ct',dq)=\tilde \bm^{(0)} (dx) \Big( 2\beta
 \Ex^\psi[d\ct'] + \int_{(0,+\infty)} \Pi(dr) \; r \expp{-q r}
 \Pr^\psi_r(d\ct') \Big)\; \ind_{(0,\theta]}(q)\; dq.
\]

For $i\in I_\theta^{\uparrow,1}$, we define the sub-tree of $\ct^i$:
\[
\tilde \ct^i=\{x\in \ct^i; \cm^\uparrow(\rrbracket x_i,x
\llbracket\times [0,\theta_i])=0\}.
\]
Since $\ct^i$ is distributed according to $\N^{\psi}$ (or to $\Pr^\psi_{r_i}$
for some $r_i>0$), using the property of Poisson point measures, we have that
conditionally on $\ct^{0}$ and $\theta_i$, the tree $\tilde \ct^i$ is
distributed as $\Lambda_{\theta_i}(\ct,\cm)$ under $\N^\psi$ (or under
$\Pr^\psi_{r_i}$) that is the distribution of $\tilde \ct^i$ is
$\N^{\psi_{\theta_i}}[d \ct]$ (or $\Pr^{\psi_{\theta_i}} _{r_i}(d\ct)$), thanks
to the special Markov property. Furthermore we have $\ct^i=\tilde \ct^i
\circledast_{i'\in I_{\theta,i}^{\uparrow,2}} (\ct^{i'},x_{i'})$ where
\[
\sum_{i'\in I_{\theta,i}^{\uparrow,2}}
\delta_{(x_{i'},\ct^{i'},\theta_{i'})}
\]
 is, conditionally on $\ct^{(0)}$ and $\tilde \ct^i$ a Poisson point measure on
$\tilde \ct^i\times \EsAr \times (0,\theta]$ with intensity: 
\[
 \mathbf{m}^{\tilde \ct^i}(dx) \Big( 2\beta
 \Ex^\psi(d\ct') + \int_{(0,+\infty)} \Pi(dr) \; r \expp{-q r}
 \Pr^\psi_r(d\ct') \Big)\; \ind_{[0,\theta_i)}(q)\; dq.
\]
Thus we deduce, using again the special Markov property, that:
\[
\tilde \cn^1_{\theta}(dx,d\ct,dq)=\sum_{i\in I^{\uparrow,1}}
\delta_{(x_i,\tilde \ct^i,\theta_i)}(dx,d\ct,dq) 
\]
is conditionally on $\ct^{0}$ a Poisson point measure on $\ct^{(0)}\times
\T\times \Theta^\psi$ with intensity:
\[
\tilde \mu^1(dx,d\ct,dq)=
\tilde \bm^{(0)}_q (dx) \bN^{\psi_q}[d\ct]\; \ind_{[0,\theta)}(q)\; dq,
\]
with $\tilde \bm^{(0)}_q (dx)=\tilde \bm^{(0)} (dx)$. We set
$\ct^{(1)}=\ct^{(0)} \circledast_{i\in I_\theta^{\uparrow,1}} (\tilde
\ct^i,x_i)$ for the first generation tree and for $q\in [0,\theta]$:
\[
\tilde \bm^{(1)}_q (dx)=\sum _{i\in I_{\theta}^{\uparrow,1}}
 \bm^{\tilde \ct^i}(dx) \ind_{[0,\theta_i)}(q).
\]
See Figure \ref{fig:generation} for a simplified representation. We get that $
(\at^{(1)}_\theta, (\bm^{(1)}_q, q\in [0,\theta]), \at^{(0)}, \bm^{\at^{(0)}})$
and $ (\ct^{(1)},(\tilde \bm^{(1)}_ q, q\in [0,\theta]), \ct^{(0)},\tilde
\bm^{(0)})$ have the same distribution.

\begin{figure}[htbp]
 \begin{center}
 \includegraphics{./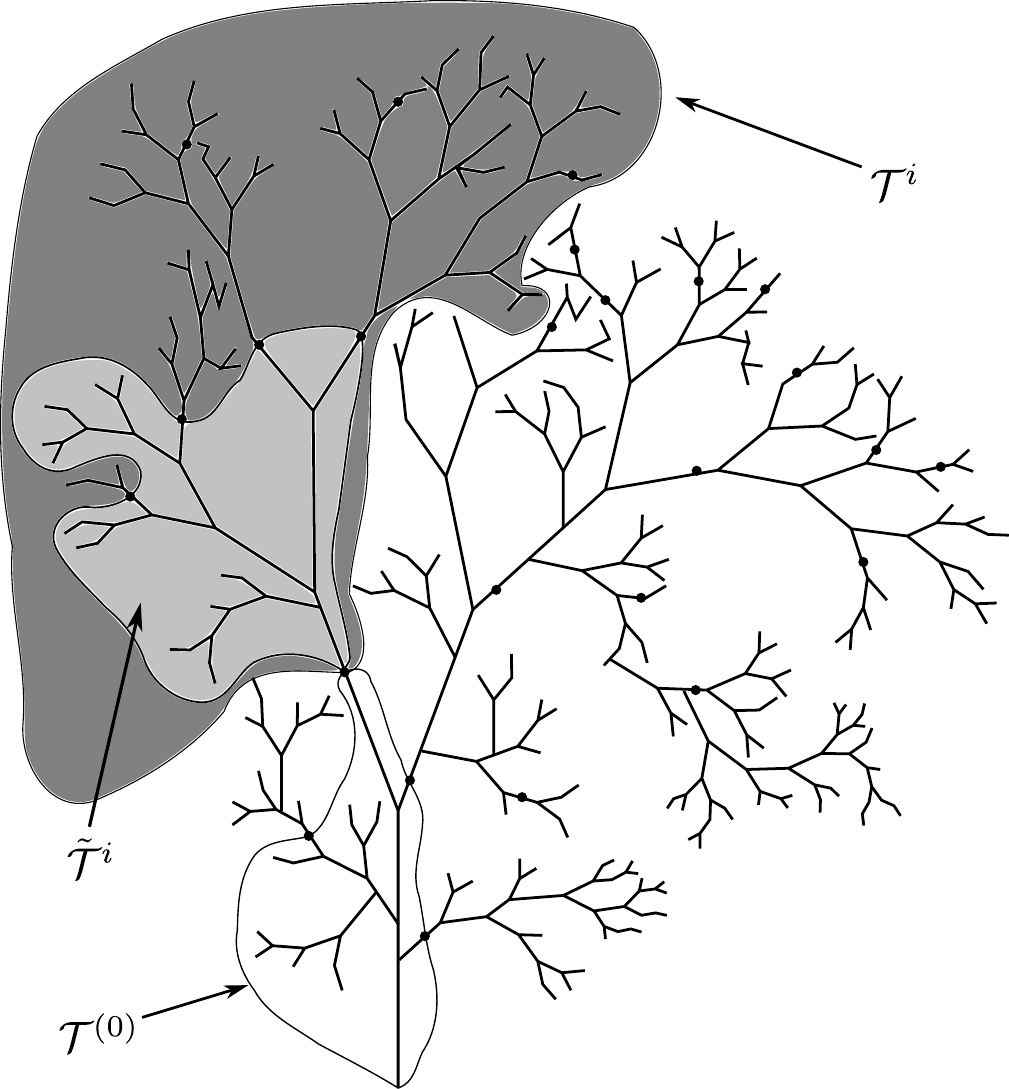}
 \caption{The tree $\ct_0$, $\ct^{(0)}$, and a tree $\ct^i$ and its
 sub-tree $\tilde \ct^i$ belonging to the first generation tree
 $\ct^{(1)}\setminus \ct^{(0)}$. }
\label{fig:generation}
 \end{center}
\end{figure}

Furthermore, by collecting all the trees grafted on $\ct^{(1)}$, we get that
$\ct=\ct^{(1)} \circledast_{i'\in I_{\theta}^{\uparrow,2}} (\ct^{i'},x_{i'})$
where $ I_{\theta}^{\uparrow,2}=\cup _{i\in I_{\theta}^{\uparrow,1} }
I_{\theta,i}^{\uparrow,2}$ and
\[
\cp^2=\sum_{i'\in I_{\theta}^{\uparrow,2}}
\delta_{(x_{i'},\ct^{i'},\theta_{i'})}
\]
is, conditionally on $ (\ct^{(1)},(\tilde \bm^{(1)}_ q, q\in [0,\theta]),
\ct^{(0)},\tilde \bm^{(0)})$ a Poisson point measure on $\ct^{(1)}\times \EsAr
\times (0,\theta]$ with intensity:
\[
\nu^2(dx,d\ct,dq)=\tilde \bm^{(1)}_q(dx)\; \Big( 2\beta \Ex^\psi(d\ct')
+ \int_{(0,+\infty)} \Pi(dr) \; r \expp{-q r} \Pr^\psi_r(d\ct')
\Big)\; \ind_{[0,\theta]}(q)\; dq.
\]

Notice that:
\begin{equation}
 \label{eq:ctm}
\ct^{(1)}=\{x\in \ct_0;\cm^\uparrow(\llbracket \emptyset, x
 \llbracket \times [0,\theta])\leq 1\}
\quad\text{and}\quad
\tilde \bm^{(1)}_\theta(dx)+
 \tilde \bm^{(0)}(dx)=\ind_{\ct^{(1)}}(x)\;\bm^{\ct_0} (dx). 
 \end{equation} 

 Then we can iterate this construction, and by taking increasing limits we
obtain that the pair $((\cup_{n\in \N} \at_\theta^{(n)}, \sum_{n\in \N}
\bm^{(n)}_\theta), \at_0)$ has the same distribution as $(\ct',\ct^{(0)})$,
where:
\[
\ct'=\{x\in \ct_0;\cm^\uparrow(\llbracket \emptyset, x
 \llbracket \times [0,\theta])<+\infty \}
\quad\text{and}\quad
\tilde \bm '(dx)=\ind_{\ct'}(x)\;\bm^{\ct_0}(dx) . 
\]
To conclude, we need to check first that the completion of $\ct'$ is $\ct_0$, or
as $\ct_0$ is complete that the closure of $\ct'$ as a subset of $\ct_0$ is
exactly $\ct_0$ and then that $\bm^{\ct_0}(\ct'^c)=0$.
\par Notice that $\cm^\uparrow $ has less marks than $\cm$. Then Proposition 1.2
in \cite{Abraham2007d} in the case when $\beta=0$ or an elementary adaptation of
it in the general framework of \cite{Voisin2010}, gives there is no loss of mass
in the fragmentation process. This implies that, if $\psi$ is (sub)critical,
then:
\begin{equation}
 \label{eq:bmminfini=0}
\bm^{ \ct_0}(\{x\in \ct_0; \cm
(\llbracket \emptyset, x \llbracket \times [0,\theta])=\infty
\}=0.
\end{equation}
Then, if $\psi$ is super-critical, by considering the restriction of $\ct_0$ up
to level $a$, $\pi_a(\ct_0)$, and using a Girsanov transformation from
Definition \ref{def:Girsanov} with $\theta=\theta^*$ and \reff{eq:bmminfini=0},
we deduce that \reff{eq:bmminfini=0} holds for $\pi_a(\ct_0)$. Since $a$ is
arbitrary, we deduce by monotone convergence that \reff{eq:bmminfini=0} holds
also in the super-critical case. Thus we have $\bm^{ \ct_0}(\ct'^c)=0$. Since
the closed support of $\bm^{\ct_0}$ is the set of leaves $\mathrm{Lf}(\ct_0)$,
we deduce that $\mathrm{Lf}(\ct')$ is dense in $\mathrm{Lf}(\ct_0)$ and, as
$\ct'$ and $\ct_0$ have the same root, that $\mathrm{Sk}(\ct')=
\mathrm{Sk}(\ct_0)$. This implies that the closure of $\ct'$ is $\ct_0$. This
ends the proof.

\section{Application to overshooting}
\label{sec:appli}
We assume that $\psi$ is critical, $\theta_\infty <0$ and Assumptions 1 and 2
hold. We shall write $u^\theta$ (resp. $b^\theta$) for the solution of
\reff{eq:def-u} (resp. \reff{bh}) when $\psi$ is replaced by $\psi_\theta$, for
$a\geq 0$, $h>0$ and $t\in [0,h)$:
\begin{equation}
 \label{eq:ub}
 \int_{u^\theta(a,\lambda)}^\lambda \frac{dr}{\psi_\theta(r)} = a, 
\quad \text{and}\quad
 b^\theta_h(t)=b^\theta(h-t)
\quad\text{with}\quad
 \int_{b^\theta(h)}^\infty \frac{dr}{\psi_\theta(r)} = h.
\end{equation}
We have $u^\theta (a,b^\theta(h-a))=b^\theta(h)$. Notice that $\partial_h
b^\theta(h)/ \psi_\theta(b^\theta(h))=-1$ and that $\partial_\lambda
u^\theta(a,\lambda)=\psi_\theta(u^\theta(a,\lambda))/\psi_\theta(\lambda)$ which
implies that:
\begin{equation}
 \label{eq:dub}
\partial_\lambda
u^\theta(a,b^\theta(h-a))=
\frac{\psi_\theta(b^\theta(h))}{\psi_\theta(b^\theta(h-a))} =
-\frac{\psi_\theta(b^\theta(h))}{\psi_\theta(b^\theta(h-a))^2} \partial_h
b^\theta(h-a). 
\end{equation}

We set for $\theta\in \Theta^\psi$ and
$\lambda\geq 0$:
\begin{equation}
 \label{eq:gamma}
 \gamma_\theta(\lambda) = 
\psi_\theta'(\lambda) - \psi_\theta'(0)=\psi'(\lambda+\theta)
-\psi'(\theta)=\partial_\theta \psi_\theta(\lambda). 
\end{equation}
Notice the function $\gamma_\theta$ is non-negative and non-decreasing. 
\par Recall that $\bar \theta=\psi^{-1}\circ \psi(\theta)$. We deduce from
\reff{eq:ub} that for $\theta\in \Theta^\psi$, $\theta<0$ and $h>0$:
\begin{equation}
 \label{eq:b=b}
\bar \theta + b^{\bar \theta}(h)=
\theta + b^{\theta}(h)
\quad\text{and}\quad
\psi_{\bar \theta} (b^{\bar \theta}(h))=
\psi_{\theta} (b^{\theta}(h)).
\end{equation}

\subsection{Exit times}\label{ExitTimes}

Let $h>0$. We are interested in the first time when the process of growing trees
exceeds height $h$, in the following sense. 

\begin{definition} The first exit time out of $h$ is the (possibly
infinite) number $A_h$ defined by
\[ 
A_h = \sup \{ \theta \in \Theta^\psi,\ H_{max}(\ct_\theta) > h\}, 
\]
with the convention that $\sup\emptyset=\theta_\infty $. 
\end{definition}

The constraint not to be higher than $h$ will be coded by the function
$b^\theta(h)$ which is the probability (under $\N^{\psi}$) for the tree $
\ct^\theta$ of having maximal height larger than $h$. By definition of the
function $b$, we have for $\theta\in \Theta^\psi$:
\begin{equation}
 \label{eq:b-Qh}
\N^\psi[\theta\leq A_h]=\N^\psi\left[H_{max}(\ct_\theta) \geq h
\right]=b^\theta(h).
\end{equation}

\begin{prop}
 \label{prop:dbq}
 Let $\psi$ be a critical branching mechanism with $\theta_\infty <0$
 and satisfying Assumptions 1 and 2. The function $\theta\mapsto
 b^\theta_h$ is of class $\cc^1$ on $(\theta_\infty , +\infty )$. And,
 under $ \N^\psi$, the distribution of $A_h$ on $(\theta_\infty ,+\infty
 )$ has density $\theta\mapsto -\partial_\theta b^\theta(h)$ with
 respect to the Lebesgue measure. We also have the following expression
 for the density of $A_h$ on $(\theta_\infty ,+\infty )$. Let
 $\theta_\infty <\theta$ and $h>0$. Then:
\[
-\partial_\theta b^\theta(h) 
= \psi_\theta(b^\theta(h))\int_0^h da \;
\frac{\gamma_\theta(b^\theta(a)) }{\psi_\theta(b^\theta(a)) }
= \int_0^{h }da \; \gamma_{ \theta} (b^{ \theta}(h-a)) \;
\expp{-\psi'({ \theta}) a - \int_0^a dx\;
\gamma_{ \theta} (b^{ \theta}(h-x)) }.
\]
\end{prop}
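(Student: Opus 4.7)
The plan is to read off the distribution of $A_h$ directly from the identity \reff{eq:b-Qh}, namely $\N^\psi[\theta\le A_h]=\N^\psi[H_{max}(\ct_\theta)\ge h]=b^\theta(h)$, which is immediate since $\theta\mapsto\ct_\theta$ (hence $\theta\mapsto H_{max}(\ct_\theta)$) is non-increasing, and $\ct_\theta$ is distributed as $\ct$ under $\N^{\psi_\theta}$. Once the $\cc^1$-regularity of $\theta\mapsto b^\theta(h)$ on $(\theta_\infty,+\infty)$ is established, the density of $A_h$ on $(\theta_\infty,+\infty)$ will be $-\partial_\theta b^\theta(h)$ by a standard absolute-continuity argument.

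For the regularity, I would invoke smooth dependence on parameters for the flow of the autonomous ODE $\partial_h b^\theta(h)=-\psi_\theta(b^\theta(h))$, obtained from \reff{eq:ub}. The vector field $(\theta,b)\mapsto -\psi_\theta(b)$ is smooth on the open set $\{b>\bar\theta-\theta\}$, and $b^\theta(h)$ stays strictly above the largest root of $\psi_\theta$ for all finite $h>0$; smooth dependence on parameters then yields $\cc^1$-regularity of $(\theta,h)\mapsto b^\theta(h)$. The derivative itself is computed by differentiating $\int_{b^\theta(h)}^\infty dr/\psi_\theta(r)=h$ in $\theta$, using $\partial_\theta \psi_\theta(r)=\gamma_\theta(r)$ from \reff{eq:gamma}; the dominating function $\gamma_\theta(r)/\psi_\theta(r)^2$ is integrable near $+\infty$ thanks to the convexity and the quadratic-type growth of $\psi$, which legitimates differentiation under the integral and gives
\[
-\partial_\theta b^\theta(h) = \psi_\theta(b^\theta(h))\int_{b^\theta(h)}^\infty \frac{\gamma_\theta(r)}{\psi_\theta(r)^2}\,dr.
\]
The change of variables $r=b^\theta(a)$ for $a\in(0,h]$, which is a decreasing diffeomorphism with $dr=-\psi_\theta(b^\theta(a))\,da$ by the ODE for $b^\theta$, converts this into the first claimed expression.

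To get the second expression, I substitute $a\mapsto h-a$ in the first integral and rewrite the prefactor $\psi_\theta(b^\theta(h))/\psi_\theta(b^\theta(h-a))$ as an exponential. Setting $f(a)=\psi_\theta(b^\theta(h-a))$ and using $\partial_a b^\theta(h-a)=\psi_\theta(b^\theta(h-a))$, the chain rule gives $(\log f)'(a)=\psi_\theta'(b^\theta(h-a))=\psi'(\theta)+\gamma_\theta(b^\theta(h-a))$ by \reff{eq:gamma}; integrating from $0$ to $a$ and exponentiating yields
\[
\frac{\psi_\theta(b^\theta(h))}{\psi_\theta(b^\theta(h-a))} = \exp\!\left(-\psi'(\theta)\,a - \int_0^a \gamma_\theta(b^\theta(h-x))\,dx\right),
\]
which, inserted into the first expression, produces the second form.

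The entire argument is a sequence of elementary calculus manipulations; the only step requiring care is the domination needed to differentiate under the integral (equivalently, the $\cc^1$-regularity), but this is clear from the non-degeneracy of $\psi_\theta$ at $b^\theta(h)$ together with the growth of $\psi$ at infinity ensured by Assumptions 1 and 2. Tracking the changes of variables and orientations is the only bookkeeping subtlety.
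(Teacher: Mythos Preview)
Your proposal is correct and follows essentially the same route as the paper: read off the distribution from \reff{eq:b-Qh}, differentiate the defining integral $\int_{b^\theta(h)}^\infty dr/\psi_\theta(r)=h$ in $\theta$ (justified by the integrability of $\psi_\theta'(r)/\psi_\theta(r)^2$ at infinity, which the paper obtains from $\psi''(\lambda)\to\beta$, $\psi'(\lambda)\to+\infty$ and Assumption~2), change variables $r=b^\theta(a)$, and then use the log-derivative identity \reff{eq:int-psi'b} to pass to the exponential form. One small caution: your phrase ``quadratic-type growth'' is imprecise when $\beta=0$; the clean justification is exactly the boundedness of $\psi_\theta'/\psi_\theta$ for large arguments combined with the Grey condition, as the paper does.
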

Notice that the distribution of $A_h$ might have an atom at
$\theta_\infty $.
\begin{proof}
 Notice that for $\theta_\infty <\theta$, we have $\lim_ {\lambda
\rightarrow+\infty } \psi''(\lambda)=\beta$ and $\lim_ {\lambda
\rightarrow+\infty } \psi'(\lambda)=+\infty $. In particular
$\psi_\theta'(\lambda)/\psi_\theta(\lambda)$ is bounded for $\lambda$ large
enough. This implies that $\int ^{+\infty }dr\; \psi'_\theta(r)/\psi_\theta(r)^2
$ is finite thanks to Assumption 2. We deduce that the function $\theta\mapsto
b^\theta_h$ is of class $\cc^1$ on $(\theta_\infty , +\infty )$ and, thanks to
\reff{eq:b-Qh}, that under $ \N^\psi$, the distribution of $A_h$ on
$(\theta_\infty ,+\infty )$ has density $\theta\mapsto -\partial_\theta
b^\theta(h)$ with respect to the Lebesgue measure.
\par Taking the derivative with respect to $\theta$ in the last term of
\reff{eq:ub}, using \reff{eq:gamma} and the change of variable $r=b^\theta(a)$
gives the first equality of the Proposition:
\begin{equation}
 \label{eq:deriv-bh}
-\partial_\theta b^\theta(h) = \psi_\theta(b^\theta(h))
\int_{b^\theta(h)}^{+\infty } dr
\frac{\gamma_\theta(r)}{\psi_\theta(r)^2}
= \psi_\theta(b^\theta(h))\int_0^h da \;
\frac{\gamma_\theta(b^\theta(a)) }{\psi_\theta(b^\theta(a)) } \cdot
\end{equation}
From \reff{eq:ub} we get that $ \partial_t b_h^\theta(t)
={\psi_\theta(b_h^\theta(t))}$. Hence, we have:
\[
\int_0^t \psi'_\theta(b^\theta_h(r))\; dr= \int_0^t
\frac{\psi_\theta'(b_h^\theta(r))}{\psi_\theta(b_h^\theta(r))}
\partial_t b_h^\theta(r) \;dr= \log\left(
 \frac{\psi_\theta(b_h^\theta(t))}{\psi_\theta(b_h^\theta(0))} 
\right). 
\]
This gives:
\begin{equation}
 \label{eq:int-psi'b}
\int_0^t \gamma_\theta(b_h^\theta(r)) dr 
= \int_0^t \psi'_\theta(b^\theta_h(r))\; dr - t \psi'(\theta) 
 = \log\left( \frac{\psi_\theta(b_h^\theta(t))}{\psi_\theta(b_h^\theta(0))}
\right) -t\psi'(\theta) .
\end{equation}
We deduce that:
\[
\int_0^{h }da \; \gamma_{ \theta} (b^{ \theta}(h-a)) \;
\expp{-\psi'({ \theta}) a - \int_0^a dx\;
\gamma_{ \theta} (b^{ \theta}(h-x)) }
= \psi_\theta(b^\theta(h))\int_0^h da \;
\frac{\gamma_\theta(b^\theta(a)) }{\psi_\theta(b^\theta(a)) }\cdot
\]
This proves the second equality of the Proposition. 
\end{proof}

Since we will also be dealing with super-critical trees, there is always the
positive probability that in the Poisson process of trees an infinite tree
arises, which will be grafted onto the process, effectively making it infinite
and thus outgrowing height $h$. In the next proposition, we will compute the
conditional distribution of overshooting time $A_h$, given $A$. Note that
we always have $A\leq A_h$. 
\begin{proposition} 
\label{prop:Qh-A}
Let $\psi$ be a critical branching mechanism with $\theta_\infty <0$
 and satisfying Assumptions 1 and 2. For $\theta_\infty <\theta_0 <\theta$ and
$\theta_0<0$ (that is $\psi_{\theta_0}$ super-critical), we have, with
$\hat{\theta} = \bar{\theta}_0-\theta_0+\theta$:
\begin{align*}
\Ex^\psi[A_h \ge \theta | A=\theta_0 ] 
&= 1 -
\psi'(\hat\theta)
\psi_{\hat\theta}(b^{\hat\theta}(h))\int_{b^{\hat\theta}(h)}^{+\infty }
\frac{dr}{\psi_{\hat\theta} (r)^2} ,\\
\Ex^\psi[A_h =A| A=\theta_0 ] 
&= \psi'(\bar \theta_0)
\psi_{\bar \theta_0}(b^{\bar \theta_0}(h))\int_{b^{\bar \theta_0}(h)}^{+\infty }
\frac{dr}{\psi_{\bar \theta_0} (r)^2}\cdot
\end{align*}
\end{proposition}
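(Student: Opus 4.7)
My approach is to first compute the joint expression $\N^\psi[A\le \theta_0,\,A_h\ge\theta]$ for $\theta_0<\theta$ in closed form, and then disintegrate against the density of $A$ from Theorem~\ref{ADAvExplo}(1) to extract the conditional probability. The second identity will then follow by a limiting argument as $\theta\downarrow\theta_0$.

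The key intermediate claim is that $\N^\psi[A\le\theta_0,\,A_h\ge\theta]=b^{\hat\theta}(h)$. Since $\theta\mapsto H_{max}(\ct_\theta)$ is non-increasing and c\`adl\`ag, I identify $\{A_h\ge\theta\}=\{H_{max}(\ct_\theta)>h\}$ up to a $\N^\psi$-null set, and Assumption~2 gives $\{A\le\theta_0\}=\{\sigma_{\theta_0}<\infty\}$. Rewriting the joint event through the Markov structure of the pruning process (Theorem~\ref{ProcArbre*}), it becomes an expectation under $\N^{\psi_{\theta_0}}[d\ct,d\cm]$ of $\ind_{\{\sigma<\infty\}}$ times a functional of $\Lambda_{\theta-\theta_0}(\ct,\cm)$. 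I then apply the Girsanov identity~\reff{GirThetaBar}, which absorbs the indicator $\ind_{\{\sigma<\infty\}}$ by replacing $\psi_{\theta_0}$ with $\psi_{\bar\theta_0}$; under this new law, a second use of Theorem~\ref{ProcArbre*} shows that $\Lambda_{\theta-\theta_0}(\ct,\cm)$ is a $\psi_{\hat\theta}$-L\'evy tree, and~\reff{eq:b-Qh} finishes with $\N^{\psi_{\hat\theta}}[H_{max}(\ct)>h]=b^{\hat\theta}(h)$.

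Given this, I disintegrate against the distribution of $A$. From Theorem~\ref{ADAvExplo}(1), $\N^\psi[A\in d\theta_0]/d\theta_0 = 1-\psi'(\theta_0)/\psi'(\bar\theta_0)$, while the chain rule yields $d\hat\theta/d\theta_0 = \psi'(\theta_0)/\psi'(\bar\theta_0)-1$, the negative of this density. Hence the ratio simplifies to
\[
\N^\psi[A_h\ge\theta\mid A=\theta_0] = -\partial_\theta b^\theta(h)\bigm|_{\theta=\hat\theta}.
\]
To match the stated form, I expand $\gamma_\theta(r)=\psi'_\theta(r)-\psi'(\theta)$ in the integral of Proposition~\ref{prop:dbq}: the $\psi'_\theta/\psi_\theta^2$-piece is an exact differential that integrates to $1/\psi_\theta(b^\theta(h))$, leaving $-\partial_\theta b^\theta(h) = 1 - \psi'(\theta)\,\psi_\theta(b^\theta(h))\int_{b^\theta(h)}^{+\infty}\psi_\theta(r)^{-2}\,dr$. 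Specializing to $\theta=\hat\theta$ yields the first formula of the proposition. For the second, I write $\{A_h>A\}=\bigcup_{n}\{A_h\ge A+1/n\}$ and pass to the limit $\theta\downarrow\theta_0$ (so $\hat\theta\downarrow\bar\theta_0$) in the first formula; the complementary probability produces the claimed expression.

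The delicate step is the application of Girsanov~\reff{GirThetaBar}, stated for functionals of $\ct$ alone, to an event depending on the full pair $(\ct,\cm)$. This is legitimate because the mark intensity defined in Section~\ref{Pruning} depends on $\ct$ only through intrinsic objects (the length measure $l^\ct$ and the masses $\Delta_y$ of the infinite nodes), so the conditional law of $\cm$ given $\ct$ is independent of the shift parameter, and the change of measure therefore acts only on the $\ct$-marginal.
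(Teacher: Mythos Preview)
Your proof is correct and takes a genuinely different route from the paper's.

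The paper works directly with the conditional law given $\{A=\theta_0\}$: it invokes Theorem~\ref{ADAvExplo}(2) to rewrite $\N^\psi[\cz_h^\theta>0\mid A=\theta_0]$ as a size-biased expectation $\psi'(\bar\theta_0)\,\N^\psi[\sigma_0\,\ind_{\{\cz_h^{\theta-\theta_0}>0\}}\expp{-\psi(\theta_0)\sigma_0}]$, applies Girsanov~\reff{Girsanov} and homogeneity to land under $\N^{\psi_{\hat\theta}}$, uses Lemma~\ref{lem:CroiSig} to handle the $\sigma$-weight, and finishes by computing $\psi'(\hat\theta)\,\N^{\psi_{\hat\theta}}[\sigma\,\ind_{\{\cz_h>0\}}]$ via an explicit integral calculation.

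Your argument instead computes the \emph{joint} quantity $\N^\psi[A\le\theta_0,\,A_h\ge\theta]$ and finds the remarkably clean closed form $b^{\hat\theta}(h)$, then disintegrates. The key observation that the Jacobian $d\hat\theta/d\theta_0$ is exactly the negative of the density of $A$ produces the cancellation yielding $\N^\psi[A_h\ge\theta\mid A=\theta_0]=-\partial_\theta b^\theta(h)\big|_{\theta=\hat\theta}$, which you then recognize via Proposition~\ref{prop:dbq}. This avoids both the size-biasing step from Theorem~\ref{ADAvExplo}(2) and Lemma~\ref{lem:CroiSig} entirely, at the cost of having to justify Girsanov on the pair $(\ct,\cm)$ rather than $\ct$ alone --- which you address correctly by noting the mark intensity is intrinsic. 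Your route is arguably more elementary and exposes a structural identity (the joint law $=b^{\hat\theta}(h)$) that the paper's direct computation does not isolate; the paper's route, on the other hand, stays entirely within tools already developed for the conditional law and does not require the differentiation/disintegration step.
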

Since $\psi_{\bar \theta_0}$ is sub-critical, we have $\psi'(\bar \theta_0)>0$
and $\psi_{\bar \theta_0}(r) \sim r \psi'(\bar \theta_0)$ when $r$ goes down to
$0$. Since $\lim_{h\rightarrow +\infty } b^{\bar \theta_0}(h)=0$, we deduce
that:
\[
\lim_{h\rightarrow+\infty } \Ex^\psi[A_h =A|
A=\theta_0 ]=1.
\] 
This has a straightforward explanation. If $h$ is very large, with high
probability the process up to $A$ will not have crossed height $h$, so that the
first jump to cross height $h$ will correspond to the grafting time of the first
infinite tree which happens at the ascension time $A$. 
\par We also deduce from \reff{eq:b=b} that:
\begin{equation}
 \label{eq:Q=A|A}
\Ex^\psi[A_h =A| A=\theta_0 ] = \psi'(\bar \theta_0)
\psi_{\theta_0}(b^{\theta_0}(h))\int_{b^{\theta_0}(h)}^{+\infty }
\frac{dr}{\psi_{ \theta_0} (r)^2}\cdot
\end{equation}

\begin{proof} 
We use the notation $\cz^\theta_h=\cz_h(\ct^\theta)$ and $\cz_h=\cz_h(\ct^0)$.
We have:
\begin{align*}
\Ex^\psi[ A_h \ge \theta | A=
\theta_0 ] 
= 
\Ex^\psi [ \cz_{h}^\theta >0 | A=\theta_0 ] 
 & = \Ex^\psi [ \cz_h^{A+(\theta-\theta_0)} >0
 | A=\theta_0 ] \\
 & = \psi'(\bar{\theta}_0) \Ex^\psi \left[ \sigma_0 \ind_{\{
\cz_h^{(\theta-\theta_0)} >0\} }
\expp{-\psi(\theta_0)\sigma_0} \right] \\
 & = \psi'(\bar{\theta}_0) \Ex^{\psi_{\bar{\theta}_0}} \left[ \sigma_0
\ind_{\{ \cz_h^{(\theta-\theta_0)} >0\} } \right]\\
 & = \psi'(\bar{\theta}_0) \Ex^\psi \left[ \sigma_{\bar{\theta}_0} \ind_{
\{\cz_{h}^{\bar{\theta}_0+(\theta-\theta_0)} >0\}} 
\right]\\
&=\psi'(\bar{\theta}_0) \Ex^\psi \left[ \sigma_{\bar{\theta}_0} \ind_{
\{\cz_{h}^{\hat \theta} >0\}} 
\right],
\end{align*}
where we used (2) of Theorem \ref{ADAvExplo} for the third equality, Girsanov
formula \reff{Girsanov} for the fourth and the homogeneity property of Theorem
\ref{ProcArbre*} in the fifth. We now condition with respect to
$\ct^{\hat\theta}$. The indicator function being measurable, the only quantity
left to compute is the conditional expectation of $\sigma_{\bar{\theta}_0}$
given $\ct^{\hat\theta}$. Thanks to Lemma \ref{lem:CroiSig}, the fact that
$\hat\theta>0$ and the homogeneity property, we get:
\[
 \Ex^\psi[ A_h \ge \theta | A=
\theta_0 ] 
 = \psi'(\hat\theta)\Ex^\psi\left[ \sigma_{\hat\theta}
\ind_{\{\cz_h^{\hat\theta} >0 \} } \right]
=\psi'(\hat\theta)\Ex^{\psi_{\hat\theta}} \left[\sigma 
\ind_{\{\cz_{h} > 0 \}} \right]. 
\]
Using that $\Ex^{\psi_{\hat\theta}}[\sigma]=1/\psi'(\hat\theta)$, which can be
deduced from \reff{eq:Ns}, we get:
\begin{align*}
 \Ex^\psi[ A_h \ge \theta | A=
\theta_0 ] 
& =\psi'(\hat\theta) \Ex^{\psi_{\hat\theta}}[\sigma] - \psi'(\hat\theta)
\Ex^{\psi_{\hat\theta}}
\Big[\int_0^h \cz_a da \ind_{\{\cz_h =0 \}} \Big] \\ 
& = 1 - \psi'(\hat\theta) \int_0^h da\; \lim_{\lambda \rightarrow
\infty} \Ex^{\psi_{\hat\theta}} \Big[ \cz_a 
\expp{-\lambda\cz_h} \Big]. 
\end{align*}
Now, conditioning by $\cz_a$ and using $\lim_{\lambda \rightarrow \infty}
u^{\hat\theta}(h-t,\lambda)=b_h^{\hat\theta}(t)$ as well as \reff{eq:NZ}, we
get:
\[ 
 \lim_{\lambda \rightarrow
\infty} \Ex^{\psi_{\hat\theta}} \Big[\cz_a 
\expp{-\lambda \cz_h} \Big] = \lim_{\lambda \rightarrow
\infty} \Ex^{\psi_{\hat\theta}}
\Big[
\cz_a \expp{-\cz_a u^{\hat\theta}(h-a,\lambda)}\Big]
= \Ex^{\psi_{\hat\theta}}[ \cz_a
\expp{-\cz_a
b_h^{\hat\theta}(a)} ]= 
\partial _\lambda u^{\hat \theta} (s,b_h^{\hat
 \theta}(a)).
\] 
Then use \reff{eq:dub} to get:
\begin{align*}
\int_0^h da\; \lim_{\lambda \rightarrow
\infty} \Ex^{\psi_{\hat\theta}} \Big[ \cz_a 
\expp{-\lambda\cz_h} \Big]
=\int_0^h da\ 
\partial _\lambda u^{\hat \theta} (s,b_h^{\hat
 \theta}(a))
&=\psi_{\hat \theta} (b^{\hat\theta}(h)) \int_0^h da\ \frac{|\partial_h
b^{\hat \theta}(h-a)|}{\psi_{\hat\theta}(b^{\hat \theta}(h-a))^2} \\
&= \psi_{\hat\theta}(b^{\hat\theta}(h))\int_{b^{\hat\theta}(h)}^{+\infty }
\frac{dr}{\psi_{\hat\theta} (r)^2},
\end{align*}
and thus deduce the first equality of the Proposition. Notice $\int^{+\infty }
dr/\psi_\theta(r)^2 <+\infty $ thanks to Assumption 2 (in fact this is true in
general). Let $\theta$ go down to $\theta_0$ and use the fact that
$\N^\psi$-a.e. $A\leq A_h$ to get the second equality.
 \end{proof} 

\begin{remark} In the quadratic case $\psi(u)=\beta u^2$, we can obtain closed
formulae. For all $\theta>0$, we have: 
\[ 
u^\theta (t,\lambda) = \frac{2\theta \lambda}{(2\theta+\lambda) \exp(2\beta
\theta t) -\lambda}
\quad\text{and}\quad 
b^\theta(t)=\frac{2\theta}{\expp{2\beta \theta t}-1}\cdot
\]
We have the following exact expression of the conditional distribution for
$\theta_0< \theta$, $\theta_0<0$ and with $\bar
\theta_0=\val{\theta_0}=-\theta_0$ and $\hat\theta=\theta+2\val{\theta_0}$: 
\begin{align*}
\Ex^\psi[A_h \ge \theta | A=\theta_0 ] 
&= 1+ (\beta\hat{\theta}
h)/ \mathrm{sinh}^2(\beta\hat{\theta} h) -
\mathrm{cotanh}(\beta\hat{\theta} h),\\
\Ex^\psi[A_h =A| A=\theta_0 ] 
&= \beta\theta_0
h/ \mathrm{sinh}^2(\beta{\theta_0}
h)-\mathrm{cotanh}(\beta{\theta_0} h) .
\end{align*}
Notice that $\lim_{\theta_0\rightarrow-\infty }\Ex^\psi[A_h =A| A=\theta_0
]=1$. This correspond to the fact that if $A$ is large, then the tree $\ct_A$ is
small and has little chance to cross level $h$. (Notice that $\ct_A$ has finite
height but $\ct_{A-}$ has infinite height.) Thus the time $A_h$ is equal to
the time when an infinite tree is grafted, that is to the ascension time $A$.
\end{remark}

\subsection{Distribution of the tree at the exit time $A_h$}

Before stating the theorem describing the tree before it overshoots a given
height $h>0$ under the form of a spinal decomposition, we shall explain how this
spine is distributed. Recall \reff{eq:gamma} for the definition of
$\gamma_\theta$.

\begin{lemma} 
\label{lem:xi}
Let $\psi$ be a critical branching mechanism satisfying Assumptions 1
and 2. Let $\theta\in \Theta^\psi$. The non-negative function
\begin{equation} 
\label{**}
f:t\mapsto \gamma_\theta(b_h^\theta(t))\exp\Big( -\int_0^t
\gamma_\theta(b_h^\theta(r)) dr 
\Big) 
\end{equation}
is a probability density on $[0,h)$ with respect to Lebesgue measure. If $\xi$
is a random variable whose distribution is $f$, then we have
$\E[\exp(-\psi'(\theta) \xi)]<+\infty $.
\end{lemma}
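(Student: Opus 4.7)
The plan is a direct computation leveraging the closed form already established inside the proof of Proposition~\ref{prop:dbq}, namely \reff{eq:int-psi'b}. Set $F(t) := \int_0^t \gamma_\theta(b^\theta_h(r))\,dr$, so that $f(t) = F'(t)\,e^{-F(t)}$ and
$$\int_0^h f(t)\,dt = 1 - \exp(-F(h^-)).$$
The first assertion reduces to showing $F(h^-) = +\infty$. For this I would invoke \reff{eq:int-psi'b} with base point $b^\theta_h(0) = b^\theta(h)$, which gives
$$F(t) = \log\!\left(\frac{\psi_\theta(b^\theta_h(t))}{\psi_\theta(b^\theta(h))}\right) - t\psi'(\theta).$$
By the defining identity \reff{eq:ub}, $b^\theta(s)\uparrow +\infty$ as $s\downarrow 0$, hence $b^\theta_h(t) = b^\theta(h-t)\uparrow +\infty$ as $t\uparrow h$; since $\psi_\theta(\lambda)\to +\infty$ as $\lambda\to+\infty$, the log term in $F(t)$ diverges while the linear term $t\psi'(\theta)$ stays bounded. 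Thus $F(h^-) = +\infty$ and $f$ integrates to $1$.

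For the exponential moment, the same closed form yields $e^{-F(t)} = \psi_\theta(b^\theta(h))\,e^{t\psi'(\theta)}/\psi_\theta(b^\theta_h(t))$, so the $e^{t\psi'(\theta)}$ factor cancels the $e^{-\psi'(\theta)t}$ coming from the Laplace weight, leaving
$$\E[e^{-\psi'(\theta)\xi}] = \psi_\theta(b^\theta(h))\int_0^h \frac{\gamma_\theta(b^\theta_h(t))}{\psi_\theta(b^\theta_h(t))}\,dt.$$
Using the ODE $\partial_t b^\theta_h(t) = \psi_\theta(b^\theta_h(t))$ (already recorded in the proof of Proposition~\ref{prop:dbq}), the change of variable $r = b^\theta_h(t)$ converts this into
$$\E[e^{-\psi'(\theta)\xi}] = \psi_\theta(b^\theta(h))\int_{b^\theta(h)}^{+\infty} \frac{\gamma_\theta(r)}{\psi_\theta(r)^2}\,dr.$$

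To bound this last integral, I would split $\gamma_\theta(r) = \psi'_\theta(r) - \psi'(\theta)$: the antiderivative computation gives $\int_{b^\theta(h)}^\infty \psi'_\theta(r)/\psi_\theta(r)^2\,dr = 1/\psi_\theta(b^\theta(h))$ exactly, while the remaining contribution is bounded by $|\psi'(\theta)|\int_{b^\theta(h)}^\infty dr/\psi_\theta(r)^2$. The latter is finite because $\psi_\theta$ is convex with $\psi_\theta(\lambda)\to +\infty$ and $\psi_\theta > 0$ on $[b^\theta(h),\infty)$, so $\psi_\theta$ is non-decreasing on this range (hence $1/\psi_\theta(r)\le 1/\psi_\theta(b^\theta(h))$ there) and $\int_{b^\theta(h)}^\infty dr/\psi_\theta(r) = h$ by the very definition of $b^\theta$. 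Together this produces a clean finite bound such as $\E[e^{-\psi'(\theta)\xi}]\le 1 + |\psi'(\theta)|\,h$.

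I do not foresee any genuine obstacle: the whole argument is essentially algebra once \reff{eq:int-psi'b} and the ODE for $b^\theta_h$ are in hand, plus the observation that $b^\theta(s)\to+\infty$ as $s\to 0^+$ which is immediate from \reff{eq:ub}.
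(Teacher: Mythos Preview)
Your argument for the density claim is identical to the paper's: write $f=F'\expp{-F}$ and use \reff{eq:int-psi'b} to see $F(h^-)=+\infty$. For the exponential moment the paper simply points back to Proposition~\ref{prop:dbq}: the expression $\int_0^h \expp{-\psi'(\theta)a}f(a)\,da$ is exactly the right-hand side of the last display there, hence equals $-\partial_\theta b^\theta(h)$, which is finite since $\theta\mapsto b^\theta(h)$ was shown to be $\cc^1$. You effectively redo that computation---arriving at the same integral \reff{eq:deriv-bh}---and then bound it by hand, obtaining the explicit estimate $1+|\psi'(\theta)|h$, a nice bonus the paper does not record.

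One small gap: your justification that $\psi_\theta$ is non-decreasing on $[b^\theta(h),\infty)$ (``convex, positive there, tends to $+\infty$'') is not a valid implication in general---a convex positive function can still dip before rising, e.g.\ $(r-1)^2+\tfrac12$ on $[0,\infty)$. The conclusion is nonetheless correct here: since $\psi$ is critical, the unique minimizer of $\psi_\theta$ on $[0,\infty)$ sits at $\max(0,-\theta)$, and for $\theta<0$ one checks from \reff{eq:ub} that $b^\theta(h)>\bar\theta-\theta>-\theta$, so $\psi_\theta$ is genuinely increasing on $[b^\theta(h),\infty)$.
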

Notice the integrability property on $\xi$ is trivial if $\theta\geq 0$. 
\begin{proof} Notice that $f=g'\expp{-g}$ with $g(t)=\int_0^t \gamma_\theta (
b_h^\theta(r))\; dr$. Thus we have $\int_0^h f= \int_0^h g' \expp{-g}
=\expp{-g(0)}-\expp{-g(h)}$ and $f$ is a density if and only if $g(h)=\infty$.
We deduce from \reff{eq:int-psi'b} that $\int_0^t \gamma_\theta(b_h^\theta(r))
dr $ diverges as $t$ goes to $h$. The last part of Proposition \ref{prop:dbq}
implies that $\expp{-\psi'(\theta) \xi}$ is integrable.
\end{proof}

Recall Equation \reff{def:gref} defining the grafting procedure. 

\begin{theorem} \label{ArbreAvantH} 
Let $\psi$ be a critical branching mechanism satisfying Assumptions
1 and 2. Let $\theta_\infty<\theta$ and let $F$ be a non-negative
measurable functional 
on $\EsAr^2$. Then, we have:
 \begin{multline*} 
 \Ex^\psi\left[F(\ct_{A_h}\; ;\; \ct_{A_h-}) | A_h=\theta
 \right] \\ 
= \inv{\mathbf{E}\left[\expp{- \psi'(\theta)
 H_\mathbf{x} } \right]}
\mathbf{E}\left[F\Big(\llbracket \emptyset,\mathbf{x} \rrbracket
\circledast_{i\in I}
(\ct^i,x_i)\; ; \;(\llbracket \emptyset,\mathbf{x} \rrbracket \circledast_{i\in
I}
(\ct^i,x_i)) \circledast (T,\mathbf{x})\Big) \;\expp{- \psi'(\theta)
 H_\mathbf{x} } \right],
 \end{multline*}
where the spine $\llbracket \emptyset,\mathbf{x} \rrbracket$ is identified with
the interval $[0,H_\mathbf{x}]$ (and thus $y\in \llbracket \emptyset,\mathbf{x}
\rrbracket$ is identified with $H_y$) and:
\begin{itemize} 
 \item The random variable 
 $H_{\mathbf{x}}$ is distributed with density given by \reff{**}. 
 \item Conditionally on $H_\mathbf{x} $, sub-trees are grafted on
 the spine $[0,H_\mathbf{x}]$ according to a Poisson point
 measure $\cn=\sum_{i\in I} \delta_{(x_i,\ct^i)}$ on
 $[0, H_\mathbf{x}]\times \EsAr$ with
 intensity:
\begin{multline} 
\label{eq:def-nu}
\nu_\theta(da,d\ct)=da \ \Big( 2 \beta (\theta+b_h^\theta(a))
\Ex^{\psi_{\theta}}[d\ct,
H_{max}(\ct) < h-a ] \\
+ \int_{(0,+\infty)} r \Pi_{\theta+b_h^{\theta}(x)}(dr)
\Pr_r^{\psi_{\theta}}(d\ct, H_{max}(\ct) < h-a ) \Big) .
\end{multline}
 \item Conditionally on $H_\mathbf{x}
 $ and on $\cn$, $T$ is a random variable on $\EsAr$ with
distribution \[ \bN^{\psi_\theta}[dT|H_{max}(T) > h -H_\mathbf{x}]. \]
 \end{itemize}
\end{theorem}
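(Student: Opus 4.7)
The natural entry point is the compensator identity of Corollary \ref{cor:cn-T} applied to the random point measure $\cn$ of jumps of the tree-valued process. Because $\theta\mapsto\ct_\theta$ is forward-decreasing and right-continuous, the event $\{A_h=\theta_j\}$ decomposes as $\{H_{max}(\ct_{\theta_j})\le h\}\cap\{H_{x_j}+H_{max}(\ct^j)>h\}$: the small tree left after the pruning jump at $\theta_j$ still has height $\le h$, while grafting $\ct^j$ at $x_j$ forces overshoot, and conversely this is the \emph{first} (in backward time) jump at which overshoot happens. Plugging the predictable process
\[
K(q,\ct_q,\ct_{q-}) = g(q)\,F(\ct_q,\ct_q\circledast(\ct,x))\,\ind_{\{H_{max}(\ct_q)\le h\}}\,\ind_{\{H_x+H_{max}(\ct)>h\}}
\]
into \reff{eq:compensation} and using the Markov property at time $q$ (so $\ct_q$ has law $\N^{\psi_q}$) would yield
\[
\N^\psi\bigl[F(\ct_{A_h},\ct_{A_h-})\,g(A_h)\bigr] = \int dq\,g(q)\, \N^{\psi_q}\!\left[\int \bm^\ct(dx)\,\bN^{\psi_q}[d\ct',\,H_{max}(\ct')>h-H_x]\,\ind_{\{H_{max}(\ct)\le h\}}\,F\right].
\]

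For the inner $d\ct'$ integral, I would factor out $b^q(h-H_x)=\bN^{\psi_q}[H_{max}(\ct')>h-H_x]$, producing the stated conditional distribution $\bN^{\psi_\theta}[\cdot|H_{max}>h-H_\mathbf{x}]$ for the overshooting subtree $T$. The remaining expectation under $\N^{\psi_q}$ would be handled by the mass-biased spinal decomposition of Theorem \ref{theo:T/feuille}, which rewrites
\[
\N^{\psi_q}\!\left[\int \bm^\ct(dx)\,G(H_x,\ct)\right] = \int_0^\infty da\,\expp{-\psi'(q) a}\,\mathbf{E}\bigl[G(a,\widetilde\ct)\bigr],
\]
where $\widetilde\ct$ is obtained by grafting on the spine $\llbracket\emptyset,\mathbf{x}\rrbracket$ of length $a$ an off-spine Poisson point measure of intensity $dz\,\bN^{\psi_q}[dT]$ on $[0,a]\times\T$. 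The constraint $H_{max}(\ct)\le h$ splits as $\{a\le h\}$ together with $\{H_{max}(\ct^i)\le h-H_{x_i}\}$ for every off-spine subtree, so the exponential formula for Poisson point measures produces a survival factor $\expp{-\int_0^a b^q(h-z)dz}$ and restricts the PPP to intensity $dz\,\bN^{\psi_q}[dT,H_{max}(T)<h-z]$ on $[0,a]\times\T$.

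At this stage the joint density of $(A_h,H_\mathbf{x})$ reads $b^q(h-a)\,\expp{-\psi'(q)a-\int_0^a b^q(h-z)dz}\,dq\,da$ on $\Theta^\psi\times[0,h]$. To match the form of the theorem I would integrate out $a$ to recover the density $-\partial_q b^q(h)$ from Proposition \ref{prop:dbq}, using the ODE $\partial_t b^q_h(t)=\psi_q(b^q_h(t))$ and identity \reff{eq:int-psi'b}, which links $\int_0^a\gamma_q(b^q_h)$ to $\log(\psi_q(b^q_h(a))/\psi_q(b^q(h)))-\psi'(q)a$. The spine density $\expp{-\psi'(\theta)a}$ is then pulled out as the announced size-biasing factor $\expp{-\psi'(\theta)H_\mathbf{x}}/\mathbf{E}[\expp{-\psi'(\theta)\xi}]$, leaving $f(a)=\gamma_\theta(b^\theta_h(a))\exp\bigl(-\int_0^a\gamma_\theta(b^\theta_h(r))dr\bigr)$ as the conditional density of $H_\mathbf{x}$ (a probability density by Lemma \ref{lem:xi}). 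The thinned PPP intensity $dz\,\bN^{\psi_\theta}[d\ct,H_{max}<h-z]$ is recast as $\nu_\theta$ by applying the Girsanov identity \reff{Girsanov} to each subtree with shift $b^\theta_h(z)$: since $\Pr_r^{\psi_\theta}(H_{max}<h-z)=\expp{-r\,b^\theta_h(z)}$, the $\Pi$-part rewrites as $r\,\Pi_{\theta+b^\theta_h(z)}(dr)\,\Pr_r^{\psi_\theta}[d\ct,H_{max}<h-z]$, and a parallel manipulation of the binary-branching part produces the $2\beta(\theta+b^\theta_h(z))\,\Ex^{\psi_\theta}$ contribution in \reff{eq:def-nu}.

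The main obstacle is this final repackaging step. The compensator-plus-spine computation naturally produces intensities in terms of $\bN^{\psi_\theta}[\cdot,H_{max}<h-z]$ with an exponential survival factor $\expp{-\int_0^a b^\theta(h-z)dz}$, whereas the statement uses the intensity $\nu_\theta$ of \reff{eq:def-nu} involving the shifted measure $\Pi_{\theta+b^\theta_h(z)}$ and the density $f$ involving $\gamma_\theta$. Showing the two descriptions coincide, up to the prescribed size-biasing by $\expp{-\psi'(\theta)H_\mathbf{x}}$, requires careful algebra based on the ODE for $b^\theta_h$, the identity \reff{eq:int-psi'b}, and the Girsanov formula, while tracking how mass gets redistributed between the spine density and the off-spine PPP.
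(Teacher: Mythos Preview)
Your approach is the paper's: compensate the jump measure via Corollary \ref{cor:cn-T}, reduce to an expectation under $\N^{\psi_\theta}$ of $\int\bm^\ct(dx)\,\bN^{\psi_\theta}[dT]\cdots\ind_{\{H_{max}(\ct)<h\}}$, apply the spinal decomposition of Theorem \ref{theo:T/feuille}, and read off the spine density and the thinned Poisson intensity. Two points need fixing.

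First, Theorem \ref{theo:T/feuille} is stated only for (sub)critical mechanisms. When $\theta<0$ the shifted mechanism $\psi_\theta$ is super-critical and you cannot invoke the spinal decomposition directly under $\N^{\psi_\theta}$. The paper inserts the Girsanov identity \reff{GirThetaBar}: on $\{H_{max}(\ct)<h\}\subset\{\sigma<\infty\}$ one has $\N^{\psi_\theta}[\,\cdot\,]=\N^{\psi_{\bar\theta}}[\,\cdot\,]$, so the spinal decomposition is carried out under the sub-critical measure $\N^{\psi_{\bar\theta}}$ with off-spine intensity $dz\,\bN^{\psi_{\bar\theta}}[dT]$. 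The algebra you flag as the ``main obstacle'' is then the one-line shift identity
\[
\gamma_{\bar\theta}\bigl(b^{\bar\theta}(h-x)+\lambda\bigr)=\gamma_{\theta+b^\theta(h-x)}(\lambda)+\gamma_\theta\bigl(b^\theta(h-x)\bigr)+\psi'(\theta)-\psi'(\bar\theta),
\]
an immediate consequence of \reff{eq:gamma} and \reff{eq:b=b}, which splits the exponent into the spine-density part $\psi'(\theta)a+\int_0^a\gamma_\theta(b^\theta(h-x))\,dx$ and the Laplace functional $\int_0^a\gamma_{\theta+b^\theta(h-x)}(\lambda(x))\,dx$ of the thinned PPP, with $\lambda(x)=\N^{\psi_\theta}[(1-\expp{-\Phi(x,\cdot)})\ind_{\{H_{max}<h-x\}}]$.

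Second, you write $b^q(h-H_x)=\bN^{\psi_q}[H_{max}(\ct')>h-H_x]$ and a survival factor $\expp{-\int_0^a b^q(h-z)\,dz}$. This conflates $\N$ and $\bN$: in fact $\bN^{\psi_q}[H_{max}>h]=\gamma_q(b^q(h))$, as the paper records. With this correction your joint density reads $\gamma_\theta(b^\theta(h-a))\,\expp{-\psi'(\theta)a-\int_0^a\gamma_\theta(b^\theta(h-z))\,dz}$, which does factor as $\expp{-\psi'(\theta)a}f(a)$ and integrates to $-\partial_\theta b^\theta(h)$ by Proposition \ref{prop:dbq}, so the disintegration goes through as you describe.
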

 
In other words, conditionally on $\{A_h = \theta \}$, we can describe the
tree before overshooting height $h$ by a spinal decomposition along the
ancestral branch of the point at which the overshooting sub-tree is grafted.
Conditionally on the height of this point, the overshooting tree has
distribution $\mathbf{N}^{\psi_\theta}[dT]$, conditioned on overshooting.
\par If $\theta>0$ then $\psi'(\theta)>0$, and we can understand the weight
$\expp{- \psi'(\theta) H_\mathbf{x} } /\mathbf{E}\left[\expp{- \psi'(\theta)
H_\mathbf{x} } \right]$ as a conditioning of the random variable $H_\mathbf{x}$
to be larger than an independent exponential random variable with parameter
$\psi'(\theta)$. 

\begin{remark} When $h$ goes to infinity, we have, for $\theta\geq 0$,
$\lim_{h\rightarrow +\infty } b^\theta(h)=0$ and thus the distribution of
$A_h$ concentrates on $\Theta^\psi\cap (-\infty ,0)$. For $\theta<0$ and
$\theta\in \Theta^\psi$, we deduce from \reff{eq:b=b} that $\lim_{h\rightarrow
+\infty } b^\theta(h)=\bar \theta -\theta>0$. And the distribution of $\xi$ in
Lemma \ref{lem:xi} clearly converges to the exponential distribution with
parameter $\gamma_\theta(b^\theta(+\infty ))=\psi'(\bar \theta) -\psi'(\theta)$.
Then the weight $\expp{- \psi'(\theta) H_\mathbf{x} } /\mathbf{E}\left[\expp{-
\psi'(\theta) H_\mathbf{x} } \right]$ changes this distribution. In the end,
$H_\mathbf{x} $ is asymptotically distributed as an exponential random variable
with parameter $\psi'(\bar \theta)$. Notice this is exactly the distribution of
the height of a random leaf taken in $\ct_A$, conditionally on $\{A=\theta\}$,
see Lemma 7.6 in \cite{Abraham2010}.
\end{remark}

\begin{remark}\label{rem:G} A direct application of Theorem \ref{ArbreAvantH}
with $F(\ct;\ct')$ chosen equal to
\begin{equation}
 \label{eq:GTT}
G(\ct;\ct')=\ind_{\{\bm^{\ct}(\ct)<+\infty,
 \bm^{\ct'}(\ct')=+\infty\}},
\end{equation}
allows to compute for $\theta<0$:
\[
\N^\psi[A=A_h|A_h=\theta]
=\left(\psi'(\bar
 \theta)- \psi'(\theta) \right) 
\frac{C(\theta,h)}{\psi'(\bar
 \theta) - \psi'(\theta) C(\theta,h)},
\]
where $C(\theta,h)=\psi'(\bar \theta)\psi_\theta(b^\theta(h) )
\int_{b^\theta(h)}^{+\infty } dr\; \psi_\theta(r)^{-2}
=\N^\psi[A=A_h|A=\theta]$. The last equality is a consequence of
\reff{eq:Q=A|A}. As $\lim_{h\rightarrow+\infty }
\N^\psi[A=A_h|A=\theta]=1$, we get that 
\[\lim_{h\rightarrow+\infty }\N^\psi[A=A_h|A_h=\theta]=1. \]
\end{remark}

\begin{remark}
 By considering the function $G$ in \reff{eq:GTT} instead of $F$ in the proof of
Theorem \ref{ArbreAvantH}, we can recover the distribution of $\ct_A$ given in
\cite{Abraham2010}, but we also can get the joint distribution of
$(\ct_{A-},\ct_A)$. Roughly speaking (and unsurprisingly), conditionally on
$\{A=\theta\}$, $\ct_{A-}$ is obtained from $\ct_A$ by grafting an independent
random tree $T$ on a independent leaf $x$ chosen according to $\bm^{\ct_A}(dx)$
and the distribution of $T$ is $\bN^{\psi_\theta}[dT,\;H_{max}(T)=+\infty ]$.
Notice that choosing a leaf at random on $\ct_A$ gives that the distribution of
$\ct_A$ is a size-biased distribution of $\N^{\psi_\theta}[d\ct]$. 
\end{remark}

\begin{proof}[Proof of Theorem \ref{ArbreAvantH}]
 Thanks to the compensation formula \reff{eq:compensation}, we can write, if $g$
is any measurable functional $\R\mapsto\R_+$ with support in $(\theta_\infty ,
+\infty )$:
\begin{multline*} 
\Ex^{\psi}[ F(\ct_{A_h}\; ;\; \ct_{A_h-})
g(A_h) ] \\ 
\begin{aligned}
& = \Ex^\psi\left[ \sum_{j\in J} \ind_{\{
H_{max}(\ct_{\theta_j}) <h \}}F(\ct_{\theta_j}\; ;\; \ct_{\theta_j} \circledast
(\ct^j,x_j)) g(\theta_j)
\ind_{\{ H_{x_j}+H_{max}(\ct^j) >h \} } \right] \\
& = \int_{\Theta^\psi} d\theta\ g(\theta) B(\theta,h),
\end{aligned}
\end{multline*}
where, using the homogeneity property and the Girsanov transformation
\reff{GirThetaBar}:
\begin{align*}
B(\theta,h) 
&= \Ex^\psi\left[
\ind_{\{H_{max}(\ct_\theta) < h\}} \int
\mathbf{m}^{\ct_\theta}(dx) \int \bN^{\psi_\theta}[dT] F(\ct_\theta\;
;\; \ct_\theta 
\circledast (T,x)) 
\ind_{\{H_x + H_{max}(T) >h\}} \right]\\
&= \Ex^{\psi_\theta}\left[
\ind_{\{H_{max}(\ct) < h\}} \int
\mathbf{m}^{\ct}(dx) \int \bN^{\psi_\theta}[dT] F(\ct\;;\; \ct
\circledast (T,x)) 
\ind_{\{H_x + H_{max}(T) >h\}} \right]\\
&= \Ex^{\psi_{\bar \theta}}\left[
\ind_{\{H_{max}(\ct) < h\}} \int
\mathbf{m}^{\ct}(dx) \int \bN^{\psi_\theta}[dT] F(\ct\;;\;\ct
\circledast (T,x)) 
\ind_{\{H_x + H_{max}(T) >h\}} \right].
\end{align*}
Notice we replaced only $\Ex^{\psi_{\theta}}$ by $\Ex^{\psi_{\bar \theta}}$ in
the last equality.
\par We explain how the term $\ind_{\{H_{max}(\ct) < h\}}$ change the
decomposition of $\ct$ according to the spine given in Theorem
\ref{theo:T/feuille}. Let $\Phi$ a non-negative measurable function defined on
$[0,+\infty )\times \T$ and $\varphi$ a non-negative measurable function defined
on $[0,+\infty )$. Using Theorem \ref{theo:T/feuille} and notations therein, we
get:
\begin{multline*}
 \Ex^{\psi_{\bar \theta}}
\left[ \int
\mathbf{m}^{\ct}(dx) \varphi(H_x) \expp{-\langle \cm_x,
\Phi \rangle } \ind_{\{H_{max}(\ct) < h\}} \right]\\
\begin{aligned}
&=\int_0^{\infty }da \; \varphi(a) \expp{-\psi'_{\bar \theta} (0) a }\;
\E\left[\expp{-\sum_{i\in I} \ind_{\{z_i\leq a\}} \Phi(z_i, 
\bar \ct^i)} \prod_{i\in I, z_i\leq a} \ind_{\{H_{max}( \bar \ct^i)
< h-z_i\}} \right] \\
&=\int_0^{h }da \; \varphi(a) \;
\exp\left(-\psi'({\bar \theta} ) a- \int_0^a dx\; \bN^{\psi_{\bar
 \theta}}\left[1- \expp{-\Phi(x, \ct)} 
 \ind_{\{H_{max}(\ct) < h-x\}}\right]\right).
\end{aligned}
\end{multline*}
Using the definition of $\bN^{\psi_{\bar \theta}}$, see \reff{def:NN},
\reff{eq:gamma} 
and the Girsanov transformation \reff{GirThetaBar}, we get:
\begin{multline*}
 \bN^{\psi_{\bar \theta}}\left[1-
 \expp{-\Phi(x, \ct)} \ind_{\{H_{max}(\ct) < h-x\}}\right]\\
\begin{aligned}
&=\gamma_{\bar \theta}\left(\N ^{\psi_{\bar \theta}}\left[1- \expp{-\Phi(x,
\ct)}
 \ind_{\{H_{max}(\ct) < h-x\}}\right] \right) \\
&=\gamma_{\bar \theta}\left(b^{\bar \theta}(h-x) + \N ^{\psi_{
 \theta}}\left[\left(1- \expp{-\Phi(x, \ct)}\right) 
 \ind_{\{H_{max}(\ct) < h-x\}}\right] \right). 
\end{aligned} 
\end{multline*}
Thanks to \reff{eq:gamma} and \reff{eq:b=b}, we have for $\lambda\geq
0$:
\[
\gamma_{\bar \theta}(b^{\bar \theta}(h-x) +\lambda)=
\gamma_{\theta+b^{\theta}(h-x) }(\lambda) + 
\gamma_{\theta}(b^{\theta}(h-x)) + \psi'(\theta)-\psi'(\bar \theta) .
\]
Take $\lambda= \N ^{\psi_{
 \theta}}\left[\left(1- \expp{-\Phi(x, \ct)}\right) 
 \ind_{\{H_{max}(\ct) < h-x\}}\right]$, 
to deduce that:
\begin{multline*}
 \Ex^{\psi_{\bar \theta}}
\left[ \int
\mathbf{m}^{\ct}(dx) \varphi(H_x) \expp{-\langle \cm_x,
\Phi \rangle } \ind_{\{H_{max}(\ct) < h\}} \right]\\
\begin{aligned}
&=\int_0^{h }da \; \varphi(a) \;
\exp\left(-\psi'({ \theta}) a - \int_0^a dx\;
\gamma_{\theta} (b^{ \theta}(h-x)) \right)\\
&\hspace{2cm}\exp\left(- \int_0^a dx\; \gamma_{\theta+ b^{
 \theta}(h-x)} 
 \left(\N^{\psi_{ \theta }}
\left[\left(1- \expp{-\Phi(x, \ct)} \right)
 \ind_{\{H_{max}(\ct) < h-x\}}\right]\right)\right)\\
&=\int_0^{h }da \; \varphi(a) \;
\exp\left(-\psi'({ \theta}) a - \int_0^a dx\;
\gamma_{ \theta} (b^{\theta}(h-x)) \right)
\E\left[\expp{-\sum_{i\in I} \ind_{\{ z_i\leq a\}} \Phi(z_i, \tilde
 \ct^i)} \right] ,
\end{aligned}
\end{multline*}
where under $\E$, $\sum_{i\in I} \delta_{(z_i, \tilde \ct^i)}(dz, d\ct)$ is a
Poisson point measure on $[0,h]\times \T$ with intensity $\nu_\theta$ in
\reff{eq:def-nu}. Since Laplace transforms characterize random measure
distributions, we get that for any non-negative measurable function $\tilde F$,
we have:
\begin{multline*}
 \Ex^{\psi_{\bar \theta}}
\left[ \int
\mathbf{m}^{\ct}(dx) \tilde F(H_x, \cm_x) \ind_{\{H_{max}(\ct) < h\}} 
\right]\\
= \int_0^{h }da \; 
\expp{-\psi'({\theta}) a - \int_0^a dx\;
\gamma_{\theta} (b^{\theta}(h-x)) } 
\E\left[\tilde F\left(a, \sum_{i\in I} \ind_{\{ z_i\leq a\}}
 \delta_{(z_i, \tilde \ct^i)}\right) \right] .
\end{multline*}
If we identify the spine $\llbracket \emptyset, x \rrbracket$ (with its metric)
to the interval $[ 0, H_x ]$ (with the Euclidean metric), we can use this result
to compute $B(\theta,h) $ with:
\[
\tilde F(H_x, \cm_x)=
\int \bN^{\psi_\theta}[dT\;|\;H_x + H_{max}(T) >h ] F(\ct\;;\; \ct
\circledast (T,x)), 
\]
$\cm_x=\sum_{i\in I_x}\delta_{(H_{x_i}, \ct^i)}$ and $\ct= [0, H_x ]
\circledast_{i\in I_x}(\ct^i,H_{x_i}) $. 
Since $\bN^{\psi_\theta}[ H_{max}(\ct) >h ]=
\gamma_\theta(b^\theta(h))$, we have:
\[
\gamma_\theta(b^\theta(h-H_x)) \tilde F(H_x , \cm_x)=
\int \bN^{\psi_\theta}[dT ] F(\ct\;;\; \ct \circledast (T,x))
\ind_{\{H_x + H_{max}(T) >h\}} .
\]
Therefore, we have:
\begin{align*}
B(\theta,h) 
&= \Ex^{\psi_{\bar \theta}}\left[
\ind_{\{H_{max}(\ct) < h\}} \int
\mathbf{m}^{\ct}(dx) \int \bN^{\psi_\theta}[dT] F(\ct\;;\;\ct
\circledast (T,x)) 
\ind_{\{H_x + H_{max}(T) >h\}} \right]\\
&=\int_0^{h }da \; \gamma_\theta(b^\theta(h-a)) 
\expp{-\psi'({ \theta}) a - \int_0^a dx\;
\gamma_{ \theta} (b^{ \theta}(h-x)) } 
\E\left[\tilde F\left(a, \sum_{i\in I} \ind_{\{ z_i\leq a\}}
 \delta_{(z_i, \tilde \ct^i)} \right) \right] .
\end{align*}
Thus, we get:
\begin{multline*}
\Ex^{\psi}[ F(\ct_{A_h}\; ;\; \ct_{A_h-})
g(A_h) ] \\
=\int_{\Theta^\psi} d\theta\ g(\theta) \int_0^{h }da \;
\gamma_\theta(b^\theta(h-a) ) \expp{-\psi'({ \theta}) a - \int_0^a dx\;
\gamma_{ \theta} (b^{ \theta}(h-x)) } \\
\E\left[\tilde F\left(a, \sum_{i\in I} \ind_{\{ z_i\leq a\}}
 \delta_{(z_i, \tilde \ct^i)}\right) \right] .
\end{multline*}
Then use the distribution of $A_h$ under $\N^\psi$ given in
Proposition 
\ref{prop:dbq} to conclude. 
\end{proof}


\begin{thebibliography}{10}

\bibitem{Abraham2007d}
{\sc Abraham, R., and Delmas, J.-F.}
\newblock {Fragmentation associated with L\'{e}vy processes using snake}.
\newblock {\em Probability Theory and Related Fields 141\/} (2007),
 113--154.

\bibitem{Abraham2009b}
{\sc Abraham, R., and Delmas, J.-F.}
\newblock {A continuum-tree-valued Markov process}.
\newblock {\em To appear in Ann. Probab.\/} (2011).

\bibitem{Abraham2011}
{\sc Abraham, R., and Delmas, J.-F.}
\newblock {Record process on the Continuum Random Tree}.
\newblock {\em ArXiv preprint arXiv:1107.3657\/} (2011).

\bibitem{Abraham2012}
{\sc Abraham, R., and Delmas, J.-F.}
\newblock {Record processes on L\'{e}vy trees}.
\newblock {\em In preparation\/} (2012).

\bibitem{Abraham2010}
{\sc Abraham, R., Delmas, J.-F., and He, H.}
\newblock {Pruning Galton-Watson Trees and Tree-valued Markov Processes}.
\newblock {\em arXiv preprint arXiv:1007.0370\/} (2010).

\bibitem{Abraham2012a}
{\sc Abraham, R., Delmas, J.-F., and Hoscheit, P.}
\newblock {A note on Gromov-Hausdorff-Prokhorov distance between (locally)
 compact measure spaces}.
\newblock {\em Preprint\/} (2012).

\bibitem{Abraham2010a}
{\sc Abraham, R., Delmas, J.-F., and Voisin, G.}
\newblock {Pruning a L\'{e}vy continuum random tree}.
\newblock {\em Electronic journal of probability 15\/} (2010), 1429--1473.

\bibitem{Aldous1991a}
{\sc Aldous, D.}
\newblock {The Continuum Random Tree. I}.
\newblock {\em The Annals of Probability 19}, 1 (1991), 1--28.

\bibitem{Aldous1998}
{\sc Aldous, D., and Pitman, J.}
\newblock {The standard additive coalescent}.
\newblock {\em The Annals of Probability 26}, 4 (1998), 1703--1726.

\bibitem{Aldous1998a}
{\sc Aldous, D., and Pitman, J.}
\newblock {Tree-valued Markov chains derived from Galton-Watson processes}.
\newblock In {\em Annales de l'Institut Henri Poincar\'{e} (B) Probability and
 Statistics\/} (1998), vol.~34, Elsevier, pp.~637--686.

\bibitem{Berestycki2009a}
{\sc Berestycki, J., Kyprianou, A., and Murillo-Salas, A.}
\newblock {The prolific backbone for supercritical superdiffusions}.
\newblock {\em Stochastic Processes and their Applications 121}, 6 (2011),
 1315--1331.

\bibitem{Burago2001}
{\sc Burago, D., Burago, Y.~D., and Ivanov, S.}
\newblock {\em {A course in metric geometry}}.
\newblock American Mathematical Society Providence, 2001.

\bibitem{Duquesne2002}
{\sc Duquesne, T., and {Le Gall}, J.-F.}
\newblock {\em {Random trees, L\'{e}vy processes and spatial branching
 processes}}, vol.~281 of {\em Ast\'{e}risque}.
\newblock SMF, 2002.

\bibitem{Duquesne2005a}
{\sc Duquesne, T., and {Le Gall}, J.-F.}
\newblock {Probabilistic and fractal aspects of L\'{e}vy trees}.
\newblock {\em Probability Theory and Related Fields 131}, 4 (2005), 553--603.

\bibitem{Duquesne2010a}
{\sc Duquesne, T., and Winkel, M.}
\newblock {General growth by hereditary properties and an invariance principle
 for Galton-Watson real trees}.
\newblock {\em Work in progress\/}.

\bibitem{Duquesne2007}
{\sc Duquesne, T., and Winkel, M.}
\newblock {Growth of L\'{e}vy trees}.
\newblock {\em Probability Theory and Related Fields 139}, 3-4 (2007),
 313--371.

\bibitem{Evans}
{\sc Evans, S.~N.}
\newblock {\em {Probability and real trees}}.
\newblock Lecture Notes in Mathematics. Springer, 2008.

\bibitem{Evans2005}
{\sc Evans, S.~N., Pitman, J., and Winter, A.}
\newblock {Rayleigh processes, real trees, and root growth with re-grafting}.
\newblock {\em Probability Theory and Related Fields 134}, 1 (2005), 81--126.

\bibitem{Evans2006}
{\sc Evans, S.~N., and Winter, A.}
\newblock {Subtree prune and regraft: A reversible real tree-valued Markov
 process}.
\newblock {\em The Annals of Probability 34}, 3 (2006), 918--961.

\bibitem{Greven2008b}
{\sc Greven, A., Pfaffelhuber, P., and Winter, A.}
\newblock {Convergence in distribution of random metric measure spaces
 ($\Lambda$-coalescent measure trees)}.
\newblock {\em Probability Theory and Related Fields 145}, 1-2 (2008),
 285--322.

\bibitem{Gromov1999metric}
{\sc Gromov, M.}
\newblock {\em {Metric Structures for Riemannian and Non-Riemannian Spaces}}.
\newblock Modern Birkh\"{a}user Classics. Birkh\"{a}user Boston, Boston, MA,
 2007.

\bibitem{Kyprianou2006}
{\sc Kyprianou, A.}
\newblock {\em {Introductory Lectures on Fluctuations of L\'{e}vy Processes
 with Applications}}.
\newblock Springer, Berlin, Heidelberg, 2006.

\bibitem{Lambert2008a}
{\sc Lambert, A.}
\newblock {Population Dynamics and Random Genealogies}.
\newblock {\em Stochastic Models 24\/} (2008), 45--163.

\bibitem{LeGall2006a}
{\sc {Le Gall}, J.-F.}
\newblock {Random real trees}.
\newblock In {\em Annales de la facult\'{e} des Sciences de Toulouse -
 Math\'{e}matiques\/} (2006), vol.~15, Citeseer, p.~35.

\bibitem{LeGall1998b}
{\sc {Le Gall}, J.-F., and {Le Jan}, Y.}
\newblock {Branching processes in L\'{e}vy processes: the exploration process}.
\newblock {\em The Annals of Probability 26}, 1 (1998), 213--252.

\bibitem{Li2011}
{\sc Li, Z.}
\newblock {\em {Measure-Valued Branching Markov Processes}}.
\newblock Probability and Its Applications. Springer Berlin Heidelberg, 2011.

\bibitem{Miermont2007}
{\sc Miermont, G.}
\newblock {Tessellations of random maps of arbitrary genus}.
\newblock {\em Ann. Sci. Ec. Norm. Sup\'{e}r. 42\/} (2009), 725--781.

\bibitem{Voisin2010}
{\sc Voisin, G.}
\newblock {Dislocation measure of the fragmentation of a general L\'{e}vy
 tree}.
\newblock {\em To appear in ESAIM: Probability and Statistics\/} (2011).

\end{thebibliography}
\end{document}